\numberwithin{equation}{section}
\newtheorem{thm}{Theorem}[section]
\newtheorem{prob}[thm]{Problem}
\newtheorem{question}[thm]{Question}
\newtheorem{prop}[thm]{Proposition}
\newtheorem{lem}[thm]{Lemma}
\newtheorem{cor}[thm]{Corollary}
\newtheorem*{iprob*}{Problem}
\theoremstyle{definition}
\newtheorem{rem}[thm]{Remark}
\newtheorem{exam}[thm]{Example}
\newcommand{\sK}{\mathscr{K}}
\newcommand{\sL}{\mathscr{L}}
\newcommand{\ZZ}{\mathbf{Z}}
\newcommand{\NN}{\mathbf{N}}
\newcommand{\se}{\subseteq}
\newcommand{\inv}{^{-1}}
\newcommand{\fhi}{\varphi}
\newcommand{\lra}{\longrightarrow}
\newcommand{\wt}{\widetilde}
\newcommand{\ul}{\underline}
\newcommand{\cat}[1]{{\upshape CAT({\ensuremath#1})}\xspace}
\DeclareMathOperator{\Alt}{Alt}
\DeclareMathOperator{\Sym}{Sym}
\DeclareMathOperator{\Supp}{Supp}
\newcommand{\means}{\mathscr{M}}
\newcommand{\Symf}{\Sym_\mathrm{f}}
\newcommand{\Altf}{\Alt_\mathrm{f}}
\newcommand{\paf}{\mathscr{P}_\mathrm{f}}
\newcommand{\pv}{\vee}
\newcommand{\bigpv}{\bigvee}
\newcommand{\lraa}{\lra\mathrel{\mkern-22mu}\lra} 
\newcommand{\tricycle}[3]{(\ul{#1}; \ul{#2}; \ul{#3})}
\newcommand{\abs}[1]{{\left| #1 \right|}}
\newcommand{\Born}{\mathfrak{B}}
\newcommand{\Power}{\mathscr{P}}
\DeclareMathOperator{\Fix}{Fix}
\title[Between free and direct products of groups]{Between free and direct products of groups}
\author[Maxime Gheysens]{Maxime Gheysens}
\address{TU Freiberg, Germany}
\author[Nicolas Monod]{Nicolas Monod}
\address{EPFL, Switzerland}
\begin{document}

\begin{abstract}
We investigate the group $G\pv H$ obtained by gluing together two groups $G$ and $H$ at the neutral element. This construction curiously shares some properties with the free product but others with the direct product.

Our results address among others Property~(T), \cat0 cubical complexes, local embeddability, amenable actions, and the algebraic structure of $G\pv H$.
\end{abstract}
\maketitle



\section{Introduction}

Given two groups $G$ and $H$, we can define an unorthodox sort of product group $G\pv H$ as follows. Take the disjoint union of $G$ and $H$ as sets and glue them together by identifying their neutral element $e$. On the resulting set, let $G$ act regularly on itself by left multiplication, and trivially elsewhere. Proceed similarly with $H$. Then $G\pv H$ is defined as the permutation group generated by these copies of $G$ and $H$.

This construction has a few quirks; for instance, if $G$ and $H$ are both finite, then $G\pv H$ is often a simple group (Theorem~\ref{thm:finite} below). Thus we shall focus mostly on infinite groups, where the following observation restores some credit to the concept of $G\pv H$ as a ``product'' of its subgroups $G, H$.

\begin{prop}
For infinite groups $G$ and $H$, there is a canonical epimorphism $G\pv H\twoheadrightarrow G\times H$ which is compatible with the inclusions of $G,H$ into $G\pv H$ and into $G\times H$.

Thus there is a canonical identification $(G\pv H)/[G, H] \cong G\times H$.
\end{prop}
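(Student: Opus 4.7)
The natural strategy is to define two projection homomorphisms $\pi_G \colon G\pv H \to G$ and $\pi_H \colon G\pv H \to H$ by exploiting the action of $G\pv H$ on the set $X = G\cup H$ (with common neutral element). The key observation is that on $X$, an element of $G$ acts by left multiplication on $G$ and fixes $H\setminus\{e\}$ pointwise, while an element of $H$ does the symmetric thing. Hence, given any word $w = a_1 a_2 \cdots a_n$ with $a_i\in G\cup H$, a right-to-left computation shows that for $x\in G\setminus\{e\}$ the value $w(x)$ equals $g\cdot x$, where $g\in G$ is the ordered product of the $G$-letters of $w$, provided none of the partial images $a_k a_{k+1}\cdots a_n(x)$ ever passes through $e$. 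This excludes only finitely many $x$, and since $G$ is infinite the set of such ``good'' $x$ is cofinite in $G$.

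We then define $\pi_G(w) := w(x)x\inv$ for any good $x$. Because $w(x)$ depends only on $w$ as a permutation of $X$, the value $\pi_G(w)$ is independent both of $x$ and of the chosen word representation of $w$, so $\pi_G$ is well-defined on $G\pv H$. It is a homomorphism: for a product $w_1 w_2$ and cofinitely many $x$ one has both $w_2(x) = \pi_G(w_2)\cdot x$ and $w_1(\pi_G(w_2)x) = \pi_G(w_1)\pi_G(w_2)\cdot x$, so $\pi_G(w_1 w_2) = \pi_G(w_1)\pi_G(w_2)$. By construction $\pi_G|_G = \Id_G$ and $\pi_G|_H$ is trivial, and symmetrically the infinitude of $H$ produces the analogous $\pi_H$. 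Combining gives $\phi := (\pi_G,\pi_H)\colon G\pv H \to G\times H$, a homomorphism extending the canonical inclusions whose image already contains $G\times\{e\}$ and $\{e\}\times H$, hence all of $G\times H$.

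For the identification $(G\pv H)/[G,H] \cong G\times H$, one first checks that $[G,H]$ is normal in $G\pv H$: since $G\pv H$ is generated by $G\cup H$, it suffices that $G$ and $H$ separately normalize $[G,H]$, which follows from the standard identity $g'[g,h]g'\inv = [g'g,h][g',h]\inv$ and its symmetric analogue. As $\phi$ kills every commutator $[g,h]$, it factors through $(G\pv H)/[G,H]$. Conversely, in this quotient the images of $G$ and $H$ commute, so the universal property of the direct product supplies an inverse map $G\times H \to (G\pv H)/[G,H]$. Both composites fix the generating subsets $G$ and $H$, and are therefore identities.

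The main obstacle is spotting the ``asymptotic'' formula for $\pi_G$ in the first place: once one realises that every word in $G\pv H$ acts on cofinitely many $x \in G$ precisely as left multiplication by the product of its $G$-letters, the remaining verifications (well-definedness, homomorphism property, factorisation through the commutator subgroup) are essentially formal.
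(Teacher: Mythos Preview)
Your proof is correct and follows essentially the same route as the paper: both define $\pi_G(\sigma)$ as the unique $g\in G$ such that $\sigma$ acts on a cofinite subset of $\ul G$ as left multiplication by $g$, then set $\pi=(\pi_G,\pi_H)$ and identify the kernel with $[G,H]$. The only difference is cosmetic: the paper deduces the cofinite-agreement from the prior computation that each $[g,h]$ is a $3$-cycle (so every element of $[G,H]H$ has finite support in $\ul G$), whereas you obtain it directly by tracking a word letter by letter and observing that only finitely many $x$ can ever be sent through $\ul e$.
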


In other words, for infinite groups, $G\pv H$ sits between the free product and the direct product with canonical epimorphisms
%
$$G * H \lraa G\pv H\lraa G\times H.$$
\itshape The theme of this article is that $G\pv H$ is similar to $G * H$ in some respects, but closer to $G\times H$ in others. \upshape Taken together, these antagonistic tendencies show that $G\pv H$ is a simple device for constructing unusual groups.

\medskip

A first elementary illustration of this duplexity is seen when comparing individual elements $g\in G$ and $h\in H$, viewed in $G\pv H$. In the free product, $g$ and $h$ would freely generate a free group as soon as they have infinite order. For $G\pv H$, one checks the same as long as inverses are not allowed:

\begin{prop}\label{prop:pong}
If $g\in G$ and $h\in H$ have infinite order, then they freely generate a free semigroup in $G\pv H$.
\end{prop}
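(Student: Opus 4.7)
The plan is to use the canonical epimorphism $G\pv H\twoheadrightarrow G\times H$ established in the preceding proposition, together with an explicit computation of the action of positive words on the base point $e$, where $G\pv H$ acts by permutations on the set $X=(G\sqcup H)/(e_G\sim e_H)$. Recall that $g\in G$ acts on $G\subset X$ by left translation and fixes $H\setminus\{e\}$ pointwise, with the symmetric description for $h\in H$.

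Let $u,v\in\{g,h\}^+$ be two distinct positive words; the goal is to show $u\neq v$ in $G\pv H$. First, if the number of $g$-letters (equivalently, of $h$-letters) occurring in $u$ and $v$ differ, then their images in $G\times H$ are distinct powers $(g^a,h^b)\neq(g^{a'},h^{b'})$, using that $g$ and $h$ have infinite order; thus $u\neq v$ already in $G\times H$.

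Otherwise $u$ and $v$ have identical letter counts, and I would induct on their common length. If both end in the same letter $s$, stripping it yields shorter distinct words $u',v'$; since right multiplication by $s$ is a bijection of $G\pv H$, the induction hypothesis $u'\neq v'$ transfers to $u\neq v$. If instead $u=u's_1$ and $v=v's_2$ with $s_1\neq s_2$, evaluate at $e$: $u(e)=u'(s_1)$ and $v(e)=v'(s_2)$. The crucial computation is that $u'(s_1)=s_1^{1+k}$, where $k$ is the number of occurrences of $s_1$ in $u'$; indeed, starting from the point $s_1$ (lying in $G$ or $H$ according to its type), every opposite-type letter of $u'$ acts as the identity while every same-type letter left-translates, and since $s_1$ has infinite order the exponent merely grows from $1$, so the trajectory never leaves the $s_1$-territory nor reaches $e$. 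Symmetrically $v(e)=s_2^{1+k'}$, so $u(e)$ and $v(e)$ lie in the disjoint subsets $G\setminus\{e\}$ and $H\setminus\{e\}$ of $X$ (up to swapping which is which), and in particular differ.

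No real obstacle arises; the one point to check is that in the different-last-letter case $u'$ or $v'$ may be empty (in which case $u$ or $v$ is simply a single letter), but the comparison at $e$ still works verbatim, while in the same-last-letter case the assumption $u\neq v$ combined with equal letter counts automatically forces both $u'$ and $v'$ to be non-empty so that the induction is well founded.
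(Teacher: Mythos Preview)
Your argument is correct and rests on the same key computation as the paper's ``pong lemma'': a positive word with rightmost letter $g$ (resp.\ $h$) sends $\ul e$ into $\ul G\smallsetminus\{\ul e\}$ (resp.\ $\ul H\smallsetminus\{\ul e\}$), which you phrase as $u'(s_1)=s_1^{1+k}$. The paper packages this as a minimal-counterexample argument rather than an induction, and dispenses entirely with your preliminary case through $G\times H$ --- that epimorphism is in fact only established later in the paper, and is unnecessary here since the evaluation-at-$\ul e$ argument already separates any two distinct positive words regardless of their letter counts.
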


In the direct product, much to the contrary, $g$ and $h$ commute. A simple computation shows that in $G\pv H$ the commutator $[g,h]$ is still trivial up to $3$-torsion:

\begin{prop}\label{prop:3tor}
For any $g\in G$ and $h\in H$, we have $[g,h]^3=e$ in $G\pv H$.
\end{prop}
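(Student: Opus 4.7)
The plan is to use the tautological faithful action of $G\pv H$ on the wedge set $X := G \sqcup H / (e_G \sim e_H)$; since $G\pv H$ is by definition a subgroup of $\Sym(X)$, it suffices to verify that the permutation induced by $[g,h]$ has order dividing~$3$.

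I would compute $[g,h]=g h g\inv h\inv$ by chasing a generic point $x\in X$ through the successive application (right to left) of $h\inv, g\inv, h, g$. One may assume $g\neq e\neq h$, since the commutator is otherwise trivial. The essential feature of the construction is that elements of $G$ fix every point of $H\setminus\{e\}$ pointwise (and vice versa), so the only way the two factors interact is when an intermediate image lands on the basepoint~$e$. Splitting the analysis according to whether $x$ lies in $\{e\}$, $G\setminus\{e\}$, or $H\setminus\{e\}$, with the further distinction of whether $x$ equals $g$ or $h$, one finds that $[g,h]$ fixes every point of $X$ outside the three-point set $\{e,g,h\}$ and cyclically permutes that triple via $e\mapsto g\mapsto h\mapsto e$. (Note that these three points of $X$ are genuinely distinct, as $g$ and $h$ act differently on $e$.)

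Once this description of $[g,h]$ as a $3$-cycle is in hand, the relation $[g,h]^3=e$ is immediate from the faithfulness of $G\pv H\hookrightarrow\Sym(X)$. The only obstacle is combinatorial bookkeeping in the case split: one must correctly account for each moment at which a generator's regime changes (from fixing a point to moving it by left multiplication), which happens precisely when the running image equals~$e$. A compact table of the four successive images for each starting point makes the three non-trivial orbits manifest and avoids errors, after which the proof concludes in one line.
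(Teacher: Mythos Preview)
Your proposal is correct and follows essentially the same approach as the paper: the paper proves the stronger Proposition~\ref{prop:tricycle}, namely that $[g,h]$ equals the $3$-cycle $\tricycle{e}{g}{h}$, by exactly the kind of pointwise case analysis on $\ul G \cup \ul H$ that you outline, and then notes that Proposition~\ref{prop:3tor} is an immediate consequence.
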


For our next illustration, consider Kazhdan's property~(T). Recall that ${G\times H}$ has property~(T) if and only if both $G$ and $H$ do~\cite[1.9]{Harpe-Valette}. Contrariwise, $G*H$ never has property~(T) when $G$ and $H$ are non-trivial~\cite[6.a]{Harpe-Valette}.

\begin{thm}\label{thm:T}
Let $G$ and $H$ be infinite groups. Then $G\pv H$ does not have Kazhdan's property~(T).
\end{thm}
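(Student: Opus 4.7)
The plan is to exhibit a non-trivial class in $\mathrm{H}^1(G\pv H, \pi)$ for a well-chosen unitary representation $\pi$, and then to conclude via the Delorme--Guichardet characterisation of property~(T). The natural candidate for $\pi$ is the permutation representation on $\ell^2(X)$, where $X = G \cup_e H$ is the faithful $G\pv H$-set built into the very definition of the construction. Note that $G\pv H$ acts transitively on $X$, since the orbit of $e$ under $G\cup H$ already covers all of $X$.

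For the cocycle I would use the classical characteristic-function recipe. Set $A := H\setminus\{e\} \subset X$, and define
\[
b(\gamma) \;=\; \mathbf{1}_A \,-\, \pi(\gamma)\mathbf{1}_A \;=\; \mathbf{1}_A \,-\, \mathbf{1}_{\gamma A}.
\]
Since $G$ and $H$ are both infinite, both $A$ and $X\setminus A = G$ are infinite; in particular $\mathbf{1}_A \notin \ell^2(X)$, which is exactly what will force the associated cohomology class to be non-trivial.

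The only routine step is to check that $b$ indeed takes values in $\ell^2(X)$, i.e.\ that $\gamma A \bigtriangleup A$ is finite for every $\gamma \in G\pv H$. By construction of the action, any $g \in G$ fixes $H\setminus\{e\}$ pointwise and so $gA = A$; any $h \in H$ sends $A$ to $H\setminus\{h\}$, giving $|hA \bigtriangleup A| = 2$. Since the set of $\gamma$ for which $\gamma A \bigtriangleup A$ is finite is closed under products (via $\gamma_1\gamma_2 A \bigtriangleup A \subseteq (\gamma_1\gamma_2 A \bigtriangleup \gamma_1 A) \cup (\gamma_1 A \bigtriangleup A)$) and inverses, it is a subgroup; as it contains $G\cup H$, it is all of $G\pv H$. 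The cocycle identity $b(\gamma_1\gamma_2) = b(\gamma_1) + \pi(\gamma_1)b(\gamma_2)$ is then formal.

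The heart of the argument is that $b$ is not a coboundary. Suppose $b = \partial v$ for some $v \in \ell^2(X)$. The identity $\mathbf{1}_A - \pi(\gamma)\mathbf{1}_A = v - \pi(\gamma)v$ rearranges to $\pi(\gamma)(v-\mathbf{1}_A) = v-\mathbf{1}_A$, so the function $v - \mathbf{1}_A$ on $X$ is $G\pv H$-invariant. By transitivity, any such invariant function is constant, so $v = \mathbf{1}_A + c\cdot\mathbf{1}_X$ for some $c \in \mathbb{C}$. Such a $v$ would be equal to $c$ on the infinite set $G$ and to $1+c$ on the infinite set $H\setminus\{e\}$, hence cannot be square-summable; contradiction. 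Thus $[b] \neq 0$ in $\mathrm{H}^1(G\pv H, \ell^2(X))$, and Delorme--Guichardet rules out property~(T). The only point demanding care is the subgroup verification that $|\gamma A\bigtriangleup A|<\infty$ for all $\gamma$, but this reduces immediately to the two generator cases computed above.
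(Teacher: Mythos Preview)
Your argument is correct. The key observation---that the set $A=\ul H\smallsetminus\{\ul e\}$ (equivalently its complement $\ul G$) is commensurated by $G\pv H$---is precisely the one the paper exploits, and your verification of this via the generators, together with the coboundary contradiction using transitivity and the infinitude of both $A$ and $X\smallsetminus A$, is clean. The appeal to Delorme--Guichardet is also safe in the direction you need: Delorme's implication $(\mathrm{T})\Rightarrow(\mathrm{FH})$ holds without any $\sigma$-compactness assumption, so exhibiting a non-zero class in $\mathrm{H}^1$ suffices even when $G$ and $H$ are uncountable.

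The paper takes a slightly different, more geometric route: from the same commensurated set $\ul G$ it builds the associated \cat0 cubical complex (vertices are the sets $v$ with $v\triangle \ul G$ finite, edges given by single-element symmetric differences) and shows that the $G\pv H$-action on it has unbounded orbits; the failure of~(T) then follows from the standard passage from cube complexes to affine Hilbert actions. Your cocycle is exactly the Hilbert-space shadow of that construction, with the cube-complex layer stripped away. What you gain is a shorter, self-contained proof of Theorem~\ref{thm:T}; what the paper's approach buys is the additional structural content of Theorem~\ref{thm:ccc} (existence of adjacent $G$- and $H$-fixed vertices, description of the orbits), which is of independent interest beyond the mere negation of~(T).
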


Thus, from the perspective of property~(T), it seems that $G\pv H$ is more similar to $G*H$ than to $G\times H$. A closer look, however, could support the opposite stance. Indeed, the proof that $G*H$ fails property~(T) comes from Bass--Serre theory. Namely, property~(T) implies the weaker property~(FA) of Serre, which is incompatible with free products. Here again, $G\times H$ has property~(FA) if and only if both $G$ and $H$ do. Now however $G\pv H$ shares this trait:

\begin{thm}
Let $G$ and $H$ be any groups. Then $G\pv H$ has Serre's property~(FA) if and only if both $G$ and $H$ do.
\end{thm}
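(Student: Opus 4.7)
The plan is to treat each direction separately.

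For the \emph{only if} implication, I would construct a retraction $\pi_G\colon G\pv H\twoheadrightarrow G$ fixing $G$ pointwise and killing $H$ (and symmetrically for $H$), whenever $G$ is infinite. Starting from the obvious projection $\phi\colon G*H\to G$ that sends each $h$ to $e$, the point is to show that $\phi$ factors through $G\pv H$. Given a word $w=s_1\cdots s_n\in G*H$ trivial in $G\pv H$, I would apply $w$ to a ``generic'' basepoint $x\in G\setminus\{e\}\subset X$, chosen to avoid the finite set $\{\phi(s_k\cdots s_n)^{-1}:0\le k\le n\}$---possible because $G$ is infinite. Tracing the action, the running value stays in $G\setminus\{e\}$ throughout (where $H$-letters fix it and $G$-letters multiply on the left), yielding $w\cdot x=\phi(w)\cdot x$ and hence $\phi(w)=e$. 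Finite groups have (FA) automatically, so $G$ inherits (FA) from $G\pv H$ via the quotient $\pi_G$, and symmetrically for $H$.

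For the \emph{if} direction, the case where $G$ and $H$ are both finite is automatic since $G\pv H\le\Sym(X)$ is then finite. Otherwise I would verify Serre's three criteria for (FA). The first two---not being a strictly ascending countable union of proper subgroups, and admitting no surjection onto $\ZZ$---follow straightforwardly from the corresponding properties of $G$ and $H$ together with $G\pv H=\langle G,H\rangle$: any such chain eventually contains both $G$ and $H$, and any homomorphism to $\ZZ$ is trivial on the generating set $G\cup H$.

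The crux is the third criterion: ruling out a non-trivial amalgamated decomposition $G\pv H=A*_C B$. Assuming such a decomposition, both $G$ and $H$ would fix vertices on the Bass--Serre tree (by (FA)) and hence be conjugate into $A$ or $B$. After conjugating, I would take $G\se A$; then $H$ sits inside $aAa^{-1}$ or $aBa^{-1}$ for some $a\in G\pv H$. The key input is Proposition~\ref{prop:3tor}: every commutator $[g,h]$ is $3$-torsion, so must act elliptically on the Bass--Serre tree. In the paradigmatic case $H\se B$ with $g\in G\setminus C$ and $h\in H\setminus C$, the word $ghg^{-1}h^{-1}$ is cyclically reduced of length $4$ in the amalgam, hence acts hyperbolically with translation length $4$---contradicting the $3$-torsion. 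The degenerate sub-configurations (where $G$, $H$, or $\langle G,H\rangle$ collapses into a single vertex stabilizer) contradict the non-triviality of the amalgam directly.

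The main obstacle lies in the ``twisted'' configurations where $H$ is conjugate into the same factor as $G$ via an element $a$ of positive Bass--Serre distance. Here one must expand $[g,h]$ in amalgamated normal form after absorbing $a$, and verify that the resulting word is cyclically reduced of positive length---a purely combinatorial check in the normal-form calculus of amalgams. This succeeds provided $g$ and $h$ can be chosen to avoid finitely many conjugates of $C$; if no such choices exist, then $G$ or $H$ itself sits inside a conjugate of $C$, which forces $G\pv H$ to fix a vertex and once again collapses the amalgam.
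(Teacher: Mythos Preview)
Your route is genuinely different from the paper's and the key idea---using $[g,h]^3=e$ to force ellipticity---is sound, but the execution has a real gap and an unnecessary detour.

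\textbf{Comparison with the paper.} The paper argues directly on an arbitrary tree action via the locally finite normal subgroup $A=\Altf(\ul G\cup\ul H)$: either $T^A\neq\varnothing$ and the quotient $G\times H$ acts on it with~(FA), or else $A$ fixes a unique end $\xi$ (by local finiteness), normality forces all of $G\pv H$ to fix $\xi$, and the rays from any $G$-fixed and any $H$-fixed vertex to $\xi$ eventually merge, producing a common fixed vertex. Your argument, by contrast, uses only Proposition~\ref{prop:3tor} and would in fact prove that \emph{any} group generated by two (FA)~subgroups whose mutual commutators are torsion has~(FA)---a pleasant generalisation that the paper's structural proof does not yield.

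\textbf{The gap.} The ``twisted configurations'' case is not actually carried out; you defer it to an unspecified normal-form computation. More seriously, routing the argument through Serre's three criteria is problematic: Serre proves their \emph{sufficiency} only for countable groups, whereas the theorem is stated for arbitrary $G,H$. Both issues dissolve if you abandon the criteria and argue geometrically on an arbitrary tree action. Take $v_G\in T^G$ and $v_H\in T^H$ realising the minimal distance $k$ between these two (non-empty) subtrees. If $k\ge1$, minimality supplies $g\in G$ not fixing the neighbour $v_1$ of $v_G$ towards $v_H$, and $h\in H$ not fixing the neighbour of $v_H$ towards $v_G$. Then $v_H$ is the closest point of $\Fix(h)$ to $v_G$, so $\Fix(h)$ and $g\cdot\Fix(h)$ are disjoint: their closest points to $v_G$ are $v_H$ and $g v_H$, lying in the distinct branches through $v_1$ and $g v_1\neq v_1$. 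Hence $[g,h]=(ghg^{-1})h^{-1}$ is a product of two elliptics with disjoint fixed sets and is therefore hyperbolic, contradicting $[g,h]^3=e$. This single computation handles every $k\ge1$ at once, with no amalgam combinatorics. (Your ``only if'' direction is fine and essentially reproves Proposition~\ref{prop:epi}.)
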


From this it is clear that, contrary to the free product case, the obstruction to property~(T) recorded in Theorem~\ref{thm:T} does not come from an action on a tree: for instance, when $G$ and $H$ are infinite Kazhdan groups, $G\pv H$ fails~(T) but retains~(FA). The obstruction does, however, come from an infinite-dimensional generalisation of a tree:

\begin{thm}
Let $G$ and $H$ be infinite groups. Then $G\pv H$ acts by automorphisms on a \cat0 cubical complex $V$ without bounded orbits.

Moreover, each of $G$ and $H$ have fixed vertices in $V$, and there is a unique choice of such vertices that are adjacent.
\end{thm}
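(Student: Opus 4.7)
I would apply the Sageev construction to a $(G\pv H)$-invariant wall space on the group $G\pv H$ itself (acting on itself by left multiplication). The walls will be the partitions $\{wG,(G\pv H)\setminus wG\}$ indexed by cosets $wG\in(G\pv H)/G$ (the ``$G$-family''), together with the analogous partitions from cosets of $H$ (the ``$H$-family''). Both families are permuted by left multiplication, so the wall space is $(G\pv H)$-equivariant, and Sageev's construction yields a \cat0 cube complex $V$ on which $G\pv H$ acts by automorphisms.

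For the fixed vertices, I would define $v_G$ by choosing a $G$-equivariant orientation of each wall: for a $G$-family wall the choice ``pick the side $wG$'' is automatically coherent under left $G$-action (since $G$ permutes $\{wG\}_{w}$ compatibly), and a parallel coherent choice for $H$-family walls (picking the side containing the identity coset $G$) completes the orientation. A symmetric definition with the roles of $G$ and $H$ swapped gives $v_H$. The vertex $v_G$ is then $G$-fixed, $v_H$ is $H$-fixed, and they differ in the orientation of a single distinguished wall, namely the one separating the cosets $G$ and $H$ themselves, so they are adjacent in $V$. Uniqueness of the adjacent pair follows from a rigidity argument: any adjacent pair of $G$- and $H$-fixed vertices must differ in a single wall whose two sides are respectively $G$- and $H$-invariant, and this pins down both the wall and its orientations.

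The main obstacle, and the crux of the proof, is establishing that no $(G\pv H)$-orbit in $V$ is bounded. My approach would be to exhibit a hyperbolic element: an alternating product $w=g_1h_1g_2h_2\cdots g_nh_n$ with all $g_i,h_i$ non-trivial should cross at least $n$ walls as it acts on $v_G$, giving $d(v_G,w\cdot v_G)\to\infty$ as the length grows. Proposition~\ref{prop:pong} provides the combinatorial key in the infinite-order case: the freely generated semigroup ensures that the prefix cosets $w_iG$ are all pairwise distinct, each contributing a distinct wall flip. The delicate technical point, which I would treat separately, is the general (possibly torsion) case, where one must extract enough distinct coset-wall flips from the infinite cardinalities of $G$ and $H$ alone.
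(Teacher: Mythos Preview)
Your wall space is too coarse to produce an unbounded complex. Each element $x\in G\pv H$ lies in exactly one left $G$-coset and exactly one left $H$-coset. Therefore, given any two elements $x,y$, a $G$-family wall $\{wG,\,(G\pv H)\smallsetminus wG\}$ separates them only for $wG\in\{xG,yG\}$; at most two $G$-walls separate $x$ from $y$, and likewise at most two $H$-walls. The wall pseudometric on $G\pv H$ is thus bounded by~$4$. One checks further that every admissible orientation lies within distance~$2$ of the ``all-complements'' orientation, so the entire Sageev complex has diameter at most~$4$. Every $(G\pv H)$-orbit is bounded, and the construction cannot witness the failure of property~(T).

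Your unboundedness argument rests on a misreading of wall separation: you assert that distinct prefix cosets $w_iG$ each contribute a wall flip between $v_G$ and $w\cdot v_G$, but the wall at $w_iG$ separates $e$ from $w$ only when \emph{exactly one} of $e,w$ lies in $w_iG$; for a generic prefix neither does. The picture you have in mind is the Bass--Serre tree for $G*H$, whose edges are indeed crossed by long alternating words---but those edges are not the coset-versus-complement partitions you wrote down. There is also a problem with your description of $v_G$: choosing the coset side $wG$ for \emph{every} $G$-wall selects pairwise disjoint half-spaces and is not an admissible orientation.

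The paper's route is quite different and avoids all of this. It uses the defining action of $G\pv H$ on $\ul G\cup\ul H$ and observes that the subset $\ul G$ is \emph{commensurated}: each $\sigma\in G\pv H$ satisfies $|\sigma\ul G\,\triangle\,\ul G|<\infty$. The standard cubulation of a commensurated subset then yields the complex whose vertices are all $v\se\ul G\cup\ul H$ with $v\,\triangle\,\ul G$ finite, adjacent when $|v\,\triangle\,v'|=1$. Unboundedness is immediate from high transitivity (Corollary~\ref{cor:high}), since $|\sigma\ul G\,\triangle\,\ul G|$ is unbounded over $\sigma\in G\pv H$. The $G$-fixed vertex is $\ul G$, the $H$-fixed vertex is $\ul G\smallsetminus\{\ul e\}$, and uniqueness of the adjacent pair follows because the $G$-fixed vertices are exactly those $v\supseteq\ul G$ while the $H$-fixed vertices are exactly those $v\se\ul G\smallsetminus\{\ul e\}$.
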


This complex can thus be seen as a (weak) replacement of the Bass--Serre tree for $G*H$.

\bigskip

Before going any further, we should lift some of the mystery around the structure of $G\pv H$. Since we defined $G\pv H$ as a permutation group generated by $G$ and $H$, we shall keep our notation straight by writing $\ul G$ and $\ul H$ for the sets with identified neutral elements; thus $\ul G \cap \ul H = \{\ul e\}$. Let further $\Altf$ denote the group of \emph{even} finitely supported permutations of any given set.

\begin{prop}\label{prop:E}
If $G$ and $H$ are non-trivial, then $G\pv H$ contains $\Altf(\ul G \cup \ul H)$.

Moreover, if $G$ and $H$ are infinite, then $\Altf(\ul G \cup \ul H)$ coincides with the kernel of the canonical epimorphism $G\pv H\twoheadrightarrow G\times H$.
\end{prop}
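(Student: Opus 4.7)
For the first assertion, the plan is to exhibit every $3$-cycle of $\ul G\cup\ul H$ inside $G\pv H$, since $\Altf$ of any set is generated by its $3$-cycles. The central calculation is that for non-trivial $g\in G$ and $h\in H$, the commutator $[g,h]=ghg^{-1}h^{-1}$ acts on $\ul G\cup\ul H$ as the $3$-cycle $(\ul e,\ul g,\ul h)$: I would verify this by tracking the orbit of each point under the four-letter word, using that $G$ fixes $\ul H\setminus\{\ul e\}$ pointwise and vice versa (the computation also reproves Proposition~\ref{prop:3tor}). Next, conjugation of $[g,h]$ by an arbitrary $g_1\in G$ moves $\ul e\mapsto\ul{g_1}$ while fixing $\ul h$, yielding $(\ul{g_1},\ul{g_1 g},\ul h)$ and hence every $3$-cycle with two coordinates in $\ul G$ and one in $\ul H\setminus\{\ul e\}$; symmetrically one obtains the mirror case. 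Finally, a direct product computation
\[
(\ul{g_1},\ul{g_2},\ul h)\,(\ul{g_2},\ul{g_3},\ul h)=(\ul{g_1},\ul{g_2},\ul{g_3})
\]
(valid whenever the three $g_i$ are distinct) yields the $3$-cycles supported entirely in $\ul G$, and symmetrically for $\ul H$.

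For the second assertion, set $\Gamma=G\pv H$, $A=\Altf(\ul G\cup \ul H)$, and let $\pi\colon\Gamma\twoheadrightarrow G\times H$ denote the canonical epimorphism. The inclusion $A\subseteq\ker\pi$ is immediate: each $[g,h]$ lies in $\ker\pi$ since $[(g,e),(e,h)]=e$ in $G\times H$; and as $\ker\pi$ is normal in $\Gamma$, it contains all the $3$-cycles produced in Part~1, hence their span $A$.

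For the reverse inclusion I pass to $\Gamma/A$, which makes sense as $A\trianglelefteq\Gamma$ (being characteristic in $\Sym(\ul G\cup \ul H)$). Because $G$ is infinite, any $g\neq e$ in $G$ acts on $\ul G$ as fixed-point-free left multiplication, hence moves infinitely many points and does not belong to $A$; thus $G\hookrightarrow \Gamma/A$, and symmetrically $H\hookrightarrow \Gamma/A$. The same infinite-support observation applied to $gh^{-1}$ (for $(g,h)\neq(e,e)$) shows that $\bar G\cap\bar H=\{e\}$ in $\Gamma/A$. Since $\bar G$ and $\bar H$ commute there (because $[g,h]\in A$) and together generate $\Gamma/A$, I obtain a surjective homomorphism $\phi\colon G\times H\twoheadrightarrow \Gamma/A$, $(g,h)\mapsto\bar g\bar h$. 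The composition with the induced map $\bar\pi\colon\Gamma/A\to G\times H$ is the identity on generators of $G\times H$, hence the identity; this forces $\bar\pi$ to be an isomorphism, so $A=\ker\pi$. I expect the main effort to lie in the commutator calculation and the combinatorial production of all $3$-cycles in Part~1; Part~2 is then a short structural argument.
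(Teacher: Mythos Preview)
Your proof is correct. The core of Part~1 --- the calculation $[g,h]=(\ul e;\ul g;\ul h)$ --- is exactly the paper's Proposition~\ref{prop:tricycle}, and both proofs then reduce to generating all $3$-cycles. The bookkeeping differs slightly: the paper exploits transitivity of $G\pv H$ on $\ul G\cup\ul H$ together with normality of $[G,H]$ to reduce to cycles of the form $(\ul e;x;y)$, handling the residual case $(\ul e;\ul g;\ul{g'})$ by a single conjugation by $g\inv h$; you instead conjugate by elements of $G$ to reach all $(\ul{g_1};\ul{g_2};\ul h)$ and then use the product identity to reach $(\ul{g_1};\ul{g_2};\ul{g_3})$. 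Both routes are short and yield the stronger fact $\Altf(\ul G\cup\ul H)=[G,H]$, though you only state the inclusion.

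For Part~2 the approaches diverge a bit more. The paper builds $\pi$ explicitly (Proposition~\ref{prop:epi}) by reading off the germ at infinity in $\ul G$, and the construction itself exhibits $\ker\pi=[G,H]$; combining with $[G,H]=\Altf$ finishes. You instead take $\pi$ as given, pass to $\Gamma/A$, and produce the inverse map $\phi\colon G\times H\to\Gamma/A$ from the commuting images of $G$ and $H$; the identity $\bar\pi\circ\phi=\mathrm{id}$ then forces $\bar\pi$ to be an isomorphism. This is clean and correct. Two remarks: the intermediate claims that $G,H\hookrightarrow\Gamma/A$ and $\bar G\cap\bar H=\{e\}$ are true but unnecessary, since injectivity of $\phi$ already follows from $\bar\pi\circ\phi=\mathrm{id}$; and for $A\trianglelefteq\Gamma$ you only need that $\Altf$ is \emph{normal} in $\Sym(\ul G\cup\ul H)$ (obvious, as conjugation preserves finite support and parity), not characteristic.
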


We should not conclude that all the mystery dissolves in view of the extension
$$1 \lra \Altf(\ul G \cup \ul H) \lra G\pv H \lra G\times H \lra 1.\leqno{\text{(E)}}$$
For instance, a general construction associates to any group $L$ its \textbf{lampshuffler} group $\Altf(L) \rtimes L$; however, $G\pv H$ cannot be described simply in terms of $G\times H$ and a lampshuffler. Beside the fact that $G\pv H$ is generated by just $G$ and $H$, the extension~(E) is more complicated than a semidirect product, as our next result shows. Recall that every infinite finitely generated group has either one, two or infinitely many ends and that the one-ended case is in a sense generic since the two others have strong structural restrictions by Stallings' theorem~\cite{Stallings68, Stallings_book}.

\begin{thm}
Let $G,H$ be one-ended finitely generated groups.

Then the canonical  epimorphism $G\pv H\twoheadrightarrow G\times H$ does not split.
\end{thm}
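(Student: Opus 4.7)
Proof plan: I would aim for a contradiction from the assumption of a splitting $\sigma\colon G\times H\to G\pv H$. Denote by $\sigma_G,\sigma_H$ its restrictions to the factors, and write $\sigma_G(g)=\alpha_g\cdot g$ and $\sigma_H(h)=\beta_h\cdot h$ with $\alpha_g,\beta_h\in \Altf(\ul G\cup \ul H)$ as allowed by~(E). Since $\sigma_G(G)$ commutes with $\sigma_H(H)$, each $\sigma_G(g)$ preserves every $\sigma_H(H)$-orbit and acts on it through the centralizer of a transitive action, which is fixed-point-free off the identity. Because the natural $g$ fixes $\ul H\setminus\{\ul e\}$ pointwise and $\sigma_G(g)$ differs from $g$ only on the finite set $\Supp(\alpha_g)$, the permutation $\sigma_G(g)$ fixes cofinitely many points of any $\sigma_H(H)$-orbit meeting $\ul H\setminus\{\ul e\}$, and hence fixes such an orbit pointwise. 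A separate treatment of the $\sigma_H(H)$-orbit through $\ul e$---using the cocycle property of $\sigma_H$ together with one-endedness of $H$ to exclude the pathological case of an orbit entirely contained in $\ul G$---shows that $\sigma_G(g)$ in fact fixes all of $\ul H$ pointwise. Consequently $\Supp(\alpha_g)\se\ul G$ and $\alpha_g(\ul g)=\sigma_G(g)\cdot\ul e=\ul e$; symmetrically $\Supp(\beta_h)\se\ul H$ and $\beta_h(\ul h)=\ul e$.

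For each $x\in G$, define $\gamma_x\colon G\to G$ by $\alpha_g(\ul x)=\ul{\gamma_x(g)}$. The cocycle identity $\alpha_{g_1g_2}=\alpha_{g_1}\cdot g_1\alpha_{g_2}g_1^{-1}$, obtained from $\sigma_G$ being a homomorphism, yields $\gamma_x(g_1g_2)=\gamma_x(g_1)$ whenever $\ul{g_1^{-1}x}\notin\Supp(\alpha_{g_2})$. Fixing a finite generating set $S$ of $G$ and letting $g_2$ range over $S$, this makes $\gamma_x$ locally constant on the right Cayley graph of $G$ outside a finite set of vertices. Because $G$ is one-ended, $\gamma_x$ must take a constant value $c_x\in G$ on a cofinite subset of $G$. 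The assignment $x\mapsto c_x$ is injective since each $\alpha_g$ is, and a short argument based on this injectivity together with the normalization $\alpha_g(\ul g)=\ul e$ shows both that $c$ extends to a finitely supported permutation of $\ul G$ and that $c_e\ne e$ (otherwise $\alpha_g$ would have to map both $\ul g$ and $\ul e$ to $\ul e$ for any $g$ in the relevant cofinite set, contradicting injectivity).

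Being a finitely supported permutation, $c$ is bijective, so there is a unique $z_0\in G$ with $c_{z_0}=e$. For all but finitely many $g\in G$, the constancy of $\gamma_{z_0}$ gives $\alpha_g(\ul{z_0})=\ul{c_{z_0}}=\ul e$; combined with $\alpha_g(\ul g)=\ul e$, for $g\ne z_0$ this produces two distinct preimages of $\ul e$ under the injective permutation $\alpha_g$, the desired contradiction. The main obstacle I foresee is the clean handling of the orbit of $\ul e$ in the centralizer step, since this is where one-endedness of $H$ must be used (to rule out an orbit inside $\ul G$); the remainder of the proof is a standard combination of the cocycle relation with the local-to-global principle for locally constant functions on one-ended Cayley graphs.
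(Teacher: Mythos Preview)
Your route has a fatal gap in the second half. Granting $\Supp(\alpha_g)\subseteq\ul G$ and $\alpha_g(\ul g)=\ul e$, you define $c\colon G\to G$ via $c_x=\lim_g\alpha_g(\ul x)$ and then assert that a ``short argument'' makes $c$ a finitely supported permutation. But this cannot hold: the normalisation $\alpha_g(\ul g)=\ul e$ gives $\alpha_g^{-1}(\ul e)=\ul g$, so for any fixed $z_0$ the relation $\alpha_g(\ul{z_0})=\ul e$ forces $g=z_0$. Hence $e\notin c(G)$ and $c$ is never surjective. Your final paragraph therefore never fires --- it needs precisely a $z_0$ with $c_{z_0}=e$. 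What you have actually proved is that $c$ is \emph{not} a bijection, not that the splitting is impossible; the ``short argument'' you defer is where the whole difficulty lies. The first half also has a looser end than you indicate: the centraliser argument only handles $\sigma_H(H)$-orbits meeting $\ul H\setminus\Supp(\alpha_g)$, and there may be several orbits (not just that of $\ul e$) whose intersection with $\ul H$ sits inside $\Supp(\alpha_g)$.

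The paper avoids both issues by never trying to localise the supports. It works directly with $\chi(g)=\tilde g^{-1}\ul g$ (valued in all of $\ul G\cup\ul H$), uses one-endedness of $G$ to show $\chi$ is cofinitely equal to some $x$, proves the $\tilde G$-orbit of $x$ is free and coincides with $\ul G$ up to a finite set $A$, does the same on the $H$ side to get $y$ and $B$, shows the two orbits are disjoint, and finishes with a finite counting argument around $\ul e$ inside $\ul A\cup\ul B$. The key point is that the contradiction comes from the interaction of the \emph{two} orbits, not from a bijectivity claim on one side alone.
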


This contrasts with the fact that, by construction, each of the factors $G$ and $H$ lifts. It was pointed out to us by Yves Cornulier that the above statement can also be deduced from non-realisability results for near actions, specifically Theorem~7.C.1 in~\cite{Cornulier_near_arx} (see Proposition~2.6 in~\cite{Cornulier_real}). 

\medskip

An example of an issue that is immediately settled by the extension~(E) is the \emph{amenability} of $G\pv H$: this group is amenable if and only if both $G$ and $H$ are so. This is exactly like for $G\times H$, whereas the free product of infinite groups is never amenable.

But as soon as we refine the amenability question, $G\pv H$ swings back very close to $G*H$. Recall that a subgroup $G<\Pi$ is \textbf{co-amenable} in $\Pi$ if there is a $\Pi$-invariant mean on $\Pi/G$. Thus $G$ is co-amenable in $G\times H$ exactly when $H$ is amenable, but $G$ is only co-amenable in $G*H$ when $G*H$ itself is amenable or $H$ is trivial. The exact same behaviour is displayed by $G\pv H$:

\begin{thm}\label{thm:co-amen}
Given any two groups $G$ and $H$, the following are equivalent.

\begin{enumerate}[(i)]
\item $G$ is a co-amenable subgroup of $G\pv H$.
\item $G\pv H$ is amenable or $H$ is trivial.
\end{enumerate}
\end{thm}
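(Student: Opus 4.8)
I prove (i)$\Rightarrow$(ii) directly; the reverse is easy. If $G\pv H$ is amenable, pushing any invariant mean forward along the quotient map $G\pv H\to(G\pv H)/G$ shows $G$ is co-amenable, and if $H$ is trivial then $G\pv H=G$, in which $G$ is co-amenable. So assume $G$ is co-amenable and $H$ non-trivial, and let me show $G\pv H$ is amenable. Since $G\pv H$ is finite, hence amenable, whenever $X:=\ul G\cup\ul H$ is finite, I may assume $X$ infinite; and the case $G$ trivial is immediate (then $G\pv H=H$ and co-amenability of $G=\{\ul e\}$ is amenability of $H$), so I assume both $G,H$ non-trivial, whence $\Altf(X)\le G\pv H$ by Proposition~\ref{prop:E}. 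Fix a $G\pv H$-invariant mean $m$ on $Y:=(G\pv H)/G$.

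The engine of the argument is a computation of supports in $\Sym(X)$. A non-trivial $g\in G$ acts on $\ul G$ as a fixed-point-free translation and fixes $\ul H\setminus\{\ul e\}$ pointwise, so $\Supp(g)=\ul G$; symmetrically $\Supp(h)=\ul H$ for $h\in H\setminus\{\ul e\}$; and $\Supp(\gamma\sigma\gamma\inv)=\gamma\,\Supp(\sigma)$ for all $\gamma,\sigma$. I also record an elementary remark: every $\gamma\in G\pv H$ has \emph{finite defect}, meaning $\gamma\ul G\,\triangle\,\ul G$ is finite. This holds for the generators in $G\cup H$ (for $g\in G$ the defect is empty, for $h\in H$ it is $\{\ul e,h\}$) and is visibly preserved under products and inverses, hence holds throughout $G\pv H$.

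From this, $H$ is amenable: if some $\gamma G\in Y$ had $H\cap\gamma G\gamma\inv\ne\{\ul e\}$, there would be $g\in G\setminus\{\ul e\}$ with $\gamma g\gamma\inv\in H\setminus\{\ul e\}$, forcing $\gamma\ul G=\Supp(\gamma g\gamma\inv)=\ul H$; but $\ul G\triangle\ul H$ is infinite, contradicting finite defect. Thus $Y$ is a \emph{free} $H$-set on which $m$ is an invariant mean, so projecting a single orbit to $H$ shows $H$ is amenable. The crux is the symmetric statement for $G$, where the analogous $3$-cycle obstruction does \emph{not} vanish. Let $F\subseteq Y$ be the union of the free $G$-orbits and $NF=Y\setminus F$. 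If $\gamma G\in NF$, a non-trivial $g'\in G\cap\gamma G\gamma\inv$ gives $\ul G=\Supp(g')=\gamma\ul G$, so $\gamma$ lies in the setwise stabiliser $S:=\Stab_{G\pv H}(\ul G)\supseteq G$. Hence $NF$ is contained in the fibre over the base point of the $G\pv H$-equivariant map $p\colon Y\to(G\pv H)/S$, $\gamma G\mapsto\gamma\ul G$, onto the orbit $\mathcal O$ of $\ul G$. The key point is that $\mathcal O$ is infinite: for varying $x\in\ul G$ and a fixed $\ul h\in\ul H\setminus\{\ul e\}$, suitable $3$-cycles in $\Altf(X)$ send $\ul G$ to the distinct sets $(\ul G\setminus\{x\})\cup\{\ul h\}$. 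Therefore $p_*m$ is an invariant mean on an infinite transitive set, so it gives mass $0$ to the base point, and $m(NF)\le p_*m(\{\ul G\})=0$. Thus $m(F)=1$, and restricting $m$ to the $G$-invariant set $F$ yields an invariant mean on a free $G$-set, so $G$ is amenable. With both factors amenable, $G\pv H$ is amenable by the criterion recalled earlier, finishing (i)$\Rightarrow$(ii).

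The only genuine obstacle is the $G$-amenability step: a priori the invariant mean $m$ could concentrate on cosets with large $G$-stabiliser, on which invariance is automatic and carries no information. The support computation is precisely what is needed to confine this "non-free" locus to a single fibre of $p$, which the infinitude of $\mathcal O$ then renders null; everything else is bookkeeping around means on free orbits.
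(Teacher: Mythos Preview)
Your argument is correct and follows a genuinely different, more elementary path than the paper's. Both proofs establish the amenability of $H$ and then of $G$ separately. For $H$, the paper passes to the quotient by the normal co-amenable subgroup $[G,H]G$, whereas you show directly via support computations that the $H$-action on $(G\pv H)/G$ is free, which is arguably cleaner. The substantive divergence is in the $G$-step: the paper pushes the invariant mean forward along a $G$-equivariant map $(G\pv H)/G \to \paf(\ul G)$, $\sigma\mapsto \ul G\cap\Supp(\sigma)$, and verifies that the resulting mean witnesses \emph{extensive amenability} of the left regular $G$-action, then invokes~\cite{JMMBS18} to deduce amenability of $G$. Your approach sidesteps this machinery entirely: the same support calculation confines the non-free $G$-locus in $(G\pv H)/G$ to the single fibre over $\ul G$ of the equivariant map to the commensuration orbit $\mathcal O$, and the infinitude of $\mathcal O$ forces that fibre to be $m$-null, so the invariant mean lives on a free $G$-set. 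This is self-contained and conceptually transparent; the paper's route has the incidental benefit of yielding the stronger intermediate conclusion of extensive amenability, though that is not exploited further. One cosmetic point: your infinitude argument for $\mathcal O$ (``for varying $x\in\ul G$'') tacitly uses that $G$ is infinite; when $G$ is finite but $H$ infinite you can either vary $\ul h\in\ul H$ instead, or simply note at the outset that $G$ finite (hence amenable) together with co-amenability already gives $G\pv H$ amenable.
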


We can also consider more general amenable actions of non-amenable groups. Van Douwen has initiated the study of the class of groups admitting a faithful transitive amenable action by showing that free groups belong to this class~\cite{vanDouwen}. Many more examples have been discovered; turning back to the free product $G*H$ of infinite groups, it always belongs to this class when \emph{at least one} of $G$ or $H$ does~\cite{Glasner-Monod}. In contrast, once again $G\times H$ belongs to that class if and only if both $G$ and $H$ do.

In the case of $G\pv H$, we have no definitive answer unless we specialise to \emph{doubly} transitive actions, in which case the next two propositions display once more the two-fold tendencies of this group.

\begin{prop}\label{prop:amen}
Given two infinite groups $G$ and $H$, the following are equivalent.

\begin{enumerate}[(i)]
\item At least one of $G$ or $H$ is amenable.
\item $G\pv H$ admits a faithful doubly transitive amenable action.
\end{enumerate}
\end{prop}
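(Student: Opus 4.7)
Direction (i) $\Rightarrow$ (ii) is by explicit construction: the tautological action of $\Pi := G \pv H$ on $Y := \ul G \cup \ul H$ is faithful by construction and (highly) doubly transitive, since $\Pi$ contains $\Altf(Y)$ by Proposition~\ref{prop:E}. Assuming without loss of generality that $G$ is amenable, fix a $G$-invariant mean $m_G$ on $\ul G \cong G$ that vanishes on finite subsets (available since $G$ is infinite amenable), and define $m \in \ell^\infty(Y)^*$ by $m(f) := m_G(f|_{\ul G})$. Since $\Pi$ is generated by $G \cup H$, it suffices to check $\Pi$-invariance on these generators. For $\gamma \in G$ the function $(\gamma \cdot f)|_{\ul G}$ is the $\gamma$-translate of $f|_{\ul G}$ under the identification $\ul G \cong G$, handled by $G$-invariance of $m_G$. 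For $\eta \in H$ the element $\eta$ fixes $\ul G \setminus \{\ul e\}$ pointwise, so $(\eta \cdot f)|_{\ul G}$ and $f|_{\ul G}$ differ only at $\ul e$, i.e.\ by a finitely supported function annihilated by $m_G$.

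For (ii) $\Rightarrow$ (i), suppose $\Pi$ admits a faithful doubly transitive amenable action on $X$ with invariant mean $m$, and set $N := \Altf(\ul G \cup \ul H)$. A first step is to show that $N$ acts transitively on $X$: double transitivity implies primitivity, so the $N$-orbits are $\Pi$-invariant blocks and are hence either all singletons or $X$; the singleton alternative would place the nontrivial group $N$ in the kernel of the action, contradicting faithfulness. The strategy for amenability of $G$ is then to find a $G$-orbit $O = G \cdot x$ on which $G$ acts freely and with $m(O) > 0$: the normalised restriction $m|_O$ would then be a $G$-invariant mean on $O \cong G$, witnessing amenability of $G$ (and symmetrically for $H$).

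The main obstacle is to ensure such an orbit exists when neither $G$ nor $H$ is amenable. My plan is to argue that in that case $m$ must be concentrated on
\[
B := \{x \in X : \Stab_G(x) \neq \{e\} \text{ and } \Stab_H(x) \neq \{e\}\},
\]
since otherwise a $G$- (or $H$-)invariant mean on the $G$- (resp.\ $H$-)free part of $X$ would contradict non-amenability --- though this reduction itself calls for care because $m$ is only finitely additive. For $x \in B$, picking $g \in \Stab_G(x) \setminus \{e\}$ and $h \in \Stab_H(x) \setminus \{e\}$, the commutator $[g,h]$ fixes $x$ and, by the explicit computation underlying Proposition~\ref{prop:3tor}, is the $3$-cycle $(\ul e, \ul g, \ul h) \in N$. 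Hence $B \subseteq \bigcup_{g,h} \Fix_X((\ul e, \ul g, \ul h))$ with $(g,h) \in (G \setminus \{e\}) \times (H \setminus \{e\})$, and all these $3$-cycles are $N$-conjugate through elements fixing $\ul e$ (by the high transitivity of $\Altf$ on $Y \setminus \{\ul e\}$), so their fixed sets in $X$ share a common $m$-measure $c$. The hard part is to force $c = 0$ and thereby contradict $m(B) = 1$: I would attempt this through a combinatorial argument on finitely many such $3$-cycles with essentially disjoint fixed sets, exploiting that a point simultaneously fixed by several of them inherits a large alternating subgroup of $N$ in its $\Pi$-stabilizer. Making this finitely additive analysis and overlap control rigorous is where I expect the bulk of the work.
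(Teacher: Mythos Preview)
Your argument for (i)$\Rightarrow$(ii) is correct and essentially the same as the paper's: the paper phrases it with F{\o}lner sets translated away from $\ul e$, you phrase it with a mean on $\ul G$ vanishing on finite sets, but these are equivalent.

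For (ii)$\Rightarrow$(i), however, you are missing the key shortcut and your proposed workaround has a fatal flaw. The paper invokes Corollary~\ref{cor:high}: any faithful $2$-transitive $G\pv H$-set is \emph{isomorphic} to the canonical one on $\ul G\cup\ul H$ (this is a general fact about permutation groups containing $\Altf$ as a normal subgroup with trivial centraliser, quoted from Le~Boudec--Matte~Bon). Once you know $X\cong\ul G\cup\ul H$, the proof is two lines: the invariant mean $m$ satisfies $m(\ul G)+m(\ul H)\geq 1$, so one of them is positive, and normalising gives a left-invariant mean on that factor.

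Your alternative plan cannot be completed as stated. You aim to show that the common measure $c=m\bigl(\Fix_X((\ul e;\ul g;\ul h))\bigr)$ of the fixed set of a $3$-cycle is zero. But in the actual action (which, by the uniqueness you are not using, \emph{is} the canonical one) a $3$-cycle moves exactly three points of $\ul G\cup\ul H$, so its fixed set is cofinite and $c=1$. Thus there is no hope of finding finitely many such $3$-cycles with ``essentially disjoint fixed sets'': any two of them have fixed sets that overlap in a set of full measure. Equivalently, in the canonical model your set $B$ is empty (a point of $\ul G\smallsetminus\{\ul e\}$ has trivial $G$-stabiliser, a point of $\ul H\smallsetminus\{\ul e\}$ has trivial $H$-stabiliser), so the contradiction you seek is immediate \emph{once} you know $X\cong\ul G\cup\ul H$ --- but your combinatorial route to it does not work.
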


A stronger form of amenability for actions is \textbf{hereditary} amenability, where every orbit of every subgroup is required to be amenable. This can be further strengthened to \textbf{extensive} amenability as defined in~\cite{JMMBS18}.

\begin{prop}\label{prop:extamen}
Given two infinite groups $G$ and $H$, the following are equivalent.

\begin{enumerate}[(i)]
\item Both $G$ and $H$ are amenable.
\item $G\pv H$ admits a faithful doubly transitive extensively amenable action.
\item $G\pv H$ admits a faithful doubly transitive hereditarily amenable action.
\end{enumerate}
\end{prop}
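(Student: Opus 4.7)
\textit{Proof plan.} The implication (ii) $\Rightarrow$ (iii) is immediate from the general fact (see~\cite{JMMBS18}) that extensive amenability is strictly stronger than hereditary amenability.

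For (i) $\Rightarrow$ (ii), the extension~(E) presents $G\pv H$ as an extension of the amenable group $G\times H$ by the locally finite group $\Altf(\ul G\cup\ul H)$, so $G\pv H$ is itself amenable. An amenable group acts extensively amenably on every set: averaging along a F{\o}lner net produces an invariant mean on $\paf(X)$ concentrated on cofinal filters. The natural permutation action on $Y:=\ul G\cup\ul H$ is faithful by construction and doubly transitive because $\Altf(Y)\le G\pv H$ (Proposition~\ref{prop:E}) already acts highly transitively on $Y$; this action therefore witnesses~(ii).

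The substantive implication is (iii) $\Rightarrow$ (i). Let $G\pv H\curvearrowright X$ be faithful, doubly transitive, and hereditarily amenable. Viewed as an action of $G\pv H$ itself, it is in particular amenable, so Proposition~\ref{prop:amen} yields that at least one of $G,H$---say $H$---is amenable. By symmetry, the remaining task is to prove that $G$ is amenable. Hereditary amenability applied to the subgroup $G\le G\pv H$ says that every stabilizer $G_x$ ($x\in X$) is co-amenable in $G$, so it would suffice to find $x$ with $G_x$ amenable---ideally finite---since amenability is preserved under co-amenable extensions by amenable subgroups.

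To produce such a point the plan is to exploit the direct-product subgroup $L:=G\times \Altf(\ul H\setminus\{\ul e\})\le G\pv H$; the two factors genuinely commute because $G$ fixes $\ul H\setminus\{\ul e\}$ pointwise while $\Altf(\ul H\setminus\{\ul e\})$ fixes $\ul G$ pointwise. Applying hereditary amenability to $L$, each $L$-orbit in $X$ carries an $L$-invariant mean. The infinite simple group $\Altf(\ul H\setminus\{\ul e\})$ acts faithfully on $X$ (by simplicity, since the ambient action is faithful), so one can analyse its orbit structure and, using double transitivity of $G\pv H$, localise a point whose $L$-stabilizer projects trivially to the $G$-factor---mirroring the role played by points of $\ul H\setminus\{\ul e\}$ under the natural action on $Y$. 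Making this last localisation step precise is the main obstacle.
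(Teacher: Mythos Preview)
Your treatment of (i)$\Rightarrow$(ii) and (ii)$\Rightarrow$(iii) is essentially the paper's argument: $G\pv H$ is amenable as an extension of amenable by locally finite, every action of an amenable group is extensively amenable, and the canonical action on $\ul G\cup\ul H$ is faithful and highly transitive by Proposition~\ref{prop:E}/Corollary~\ref{cor:high}.

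The gap is in (iii)$\Rightarrow$(i), and you acknowledge it yourself (``making this last localisation step precise is the main obstacle''). Your plan---invoke Proposition~\ref{prop:amen} to get one factor amenable, then hunt inside an abstract faithful $2$-transitive $G\pv H$-set $X$ for a point with small $G$-stabiliser using the commuting subgroup $G\times\Altf(\ul H\smallsetminus\{\ul e\})$---is not obviously completable, and in any case is working much harder than necessary.

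The key step you are missing is the \emph{uniqueness} clause of Corollary~\ref{cor:high}: any faithful $2$-transitive $G\pv H$-set is isomorphic, as a $G\pv H$-set, to $\ul G\cup\ul H$. This follows from the general fact (Proposition~2.4 of~\cite{LeBoudec-MatteBon_high_arx}) that a permutation group containing $\Altf$ as a normal subgroup with trivial centraliser has a unique faithful $2$-transitive action up to isomorphism. Once you know $X\cong\ul G\cup\ul H$, the implication is immediate: hereditary amenability says every $G$-orbit in $\ul G\cup\ul H$ is $G$-amenable, and the orbit $\ul G$ is the left regular $G$-action, so $G$ is amenable; likewise for $H$. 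No detour through Proposition~\ref{prop:amen}, no search for special points, no analysis of $L$-orbits is needed.
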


Finally, we turn to an approximation property for $G\pv H$. Following Malcev~\cite[\S7.2]{Malcev_book}, a group is \textbf{locally embeddable into finite groups}, or \textbf{LEF}, if every finite subset of the group can be realised in a finite group with the same multiplication map (where defined). In particular, residually finite groups are LEF. More trivially, so are locally finite groups. This notion was further studied notably by St\"{e}pin~\cite{Stepin83, Stepin84} and Vershik--Gordon~\cite{Vershik-Gordon}; we refer to~\cite{Vershik-Gordon} and to~\cite[Chapter~7]{Ceccherini-Silberstein-Coornaert} for more background. A more general approximation property is \emph{soficity}, as introduced by Gromov~\cite{Gromov_ax} and Weiss~\cite{Weiss00}.

It is easy to see that direct products preserve LEF, and it is known that free products do so too, see Corollary~1.6 in~\cite{Berlai16}. We obtain the following weaker form of permanence.

\begin{thm}\label{thm:LEF}
    If $G$ and $H$ are two residually finite groups, then $G \pv H$ is locally embeddable into finite groups. In particular, it is sofic.
\end{thm}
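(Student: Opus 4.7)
The plan is, for each finite $F \subset G \pv H$, to construct finite quotients $\bar G = G/N_G$ and $\bar H = H/N_H$ together with an injection $\iota : F \hookrightarrow \bar G \pv \bar H$ preserving the partial multiplication. Since $\bar G \pv \bar H \subseteq \Sym(\ul{\bar G} \cup \ul{\bar H})$ is finite, this establishes LEF; soficity then follows from the classical implication.

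Assume $G, H$ are both infinite (otherwise $G \pv H$ is finite and the result is immediate). By Proposition~\ref{prop:E} every $\sigma \in G \pv H$ has a unique canonical form $\sigma = \alpha_\sigma g_\sigma h_\sigma$ with $\alpha_\sigma \in \Altf(\ul G \cup \ul H)$, $g_\sigma \in G$ and $h_\sigma \in H$. A direct computation inside $G \pv H$ gives the key multiplication formula
$$\sigma \tau \;=\; \alpha_\sigma \cdot \alpha_\tau^{g_\sigma h_\sigma} \cdot [h_\sigma, g_\tau]^{g_\sigma} \cdot (g_\sigma g_\tau)(h_\sigma h_\tau),$$
in which the three leading factors lie in $\Altf$ and have explicit finite supports. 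For example, $[h,g]$ acts on $\ul G\cup\ul H$ as the $3$-cycle $(\ul e, \ul h, \ul g)$ (in keeping with Proposition~\ref{prop:3tor}), so the support of $[h_\sigma,g_\tau]^{g_\sigma}$ is contained in $\{\ul{g_\sigma},\ul{h_\sigma},\ul{g_\sigma g_\tau}\}$.

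Let $R \subset \ul G \cup \ul H$ be the finite union, over all $\sigma, \tau \in F$, of $\Supp(\alpha_\sigma)$, $(g_\sigma h_\sigma)(\Supp(\alpha_\tau))$ and $\{\ul e, \ul{g_\sigma}, \ul{h_\sigma}, \ul{g_\sigma g_\tau}\}$. Let $S_G \subset G$ collect the underlines of elements of $R \cap \ul G$, all the $g_\sigma$, $g_\sigma g_\tau$ and their inverses, together with the products $g_\sigma a$ for $\ul a \in \Supp(\alpha_\tau)\cap\ul G$ (and similarly define $S_H$). By residual finiteness, choose finite-index normal subgroups $N_G \triangleleft G$ and $N_H \triangleleft H$ avoiding $S_G \setminus \{e\}$ and $S_H \setminus \{e\}$ respectively, and separating points of $S_G$, $S_H$. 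Let $p : \ul G \cup \ul H \twoheadrightarrow \ul{\bar G} \cup \ul{\bar H}$ be the induced surjection (with $\ul e \mapsto \bar e$), and define
$$\iota(\sigma) \;:=\; \bar\alpha_\sigma \bar g_\sigma \bar h_\sigma \;\in\; \bar G \pv \bar H,$$
where $\bar\alpha_\sigma$ is the finitely supported even permutation of $\ul{\bar G}\cup\ul{\bar H}$ induced by $\alpha_\sigma$ via $p$ (well-defined because $p|_{\Supp(\alpha_\sigma)}$ is injective by construction).

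Injectivity of $\iota|_F$ is routine: distinct pairs $(g_\sigma,h_\sigma)$ project to distinct pairs, and if they coincide then the $\alpha$-parts differ and $p$ is injective on the union of their supports. The main obstacle is checking that $\iota$ preserves products on $F$: evaluating the displayed multiplication formula inside $\bar G \pv \bar H$ and comparing with $\iota(\sigma\tau)$, one needs each of the three $\Altf$-factors to project correctly. The delicate point is that the action of $g \in G \subset G\pv H$ does \emph{not} globally commute with $p$: it sends $\ul e$ to $\ul g$ while fixing every $\ul m$ with $m \in N_H \setminus \{e\}$, yet $\ul e$ and $\ul m$ share the fiber over $\bar e$. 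The choice of $N_G, N_H$ is tailored precisely to avoid these collisions on $R$: the only element of $R$ in the fiber of $\bar e$ is $\ul e$ itself, and the $g_\sigma, h_\sigma$ have non-trivial images whenever they are non-trivial. A short case analysis then confirms $p\bigl((g_\sigma h_\sigma)(x)\bigr) = (\bar g_\sigma \bar h_\sigma)(p(x))$ for every $x \in R$, from which projection-compatibility for conjugation, commutators and products inside $\Altf$ follows, yielding $\iota(\sigma)\iota(\tau) = \iota(\sigma\tau)$.
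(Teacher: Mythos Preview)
Your approach is essentially the same as the paper's: decompose elements via the extension~(E), write out the multiplication formula, pass to finite quotients chosen deep enough to separate all the data occurring in that formula, and verify the partial homomorphism by checking equivariance of the projection on the relevant finite set. The paper uses the normal form $gha$ (with the $\Altf$-part last) and organises the argument via balls $F_n$ for a proper length, but this is cosmetic; your formulation with a single finite $F$ and the explicit set $R$ is in fact slightly more direct and sidesteps the paper's separate reduction from uncountable to countable groups.

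There is, however, a genuine gap at the very start. Your parenthetical ``otherwise $G \pv H$ is finite and the result is immediate'' is false: if $G$ is infinite and $H$ is finite non-trivial, then $G \pv H$ contains $\Altf(\ul G \cup \ul H)$ and is infinite. This case is not covered by your argument, because the decomposition $\sigma = \alpha_\sigma g_\sigma h_\sigma$ with $\alpha_\sigma \in \Altf(\ul G \cup \ul H)$ relies on Proposition~\ref{prop:E}, which assumes \emph{both} factors infinite. When $H$ is finite, the kernel of $G\pv H \to G$ is $[G,H]H$, which equals $\Symf(\ul G \cup \ul H)$ rather than $\Altf(\ul G \cup \ul H)$ precisely when $H$ has a non-trivial cyclic $2$-Sylow (compare Theorem~\ref{thm:finite}). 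The paper treats this case separately with a minor variant of the same computation; you should do likewise, or at least not dismiss it.

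A secondary remark: your ``short case analysis'' for $p\bigl((g_\sigma h_\sigma)(x)\bigr) = (\bar g_\sigma \bar h_\sigma)(p(x))$ is correct in outline, but be aware that for $x = \ul b \in \ul H$ with $b \neq e$ you need $h_\sigma b \notin N_H$ whenever $h_\sigma b \neq e$, so $S_H$ must contain all such products $h_\sigma b$ with $\ul b \in R \cap \ul H$. Your description of $S_H$ (``similarly'') presumably intends this, but it is worth stating, since the paper's proof is considerably more explicit about exactly which equivariance relations are needed and where.
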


This turns out to imply a behaviour contrasting with both free and direct products: 

\begin{cor}\label{cor:rf}
If $G$ and $H$ are residually finite and infinite, then $G \pv H$ is not finitely presented.
\end{cor}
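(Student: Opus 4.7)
The plan is to combine Theorem~\ref{thm:LEF} with the classical principle that a finitely presented LEF group must be residually finite. Since Theorem~\ref{thm:LEF} already gives that $G\pv H$ is LEF, it suffices to show that $G\pv H$ fails to be residually finite under our standing assumptions.

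For the residual-finiteness obstruction, I would appeal directly to Proposition~\ref{prop:E}: because $G$ and $H$ are non-trivial, $G\pv H$ contains $\Altf(\ul G \cup \ul H)$. Here $\ul G \cup \ul H$ is infinite (both $G$ and $H$ being infinite), and the finitely supported alternating group on an infinite set is a classical example of an infinite simple group. Any homomorphism from an infinite simple group to a finite group has trivial kernel, hence is itself trivial; in particular such a group is not residually finite. Since residual finiteness passes to subgroups, $G\pv H$ inherits this failure.

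To close the argument I would invoke the standard lemma: if $\Pi=\langle S\mid R\rangle$ is a finite presentation, $\Pi$ is LEF, and $g\in\Pi$ is non-trivial, then applying the LEF property to the finite set consisting of $\{g\}$, the generators $S$, and all partial products appearing in the relators of $R$ produces an embedding of this set into a finite group $F$; the relators then hold in $F$, yielding a genuine homomorphism $\Pi\to F$ that is non-trivial on $g$. Thus a finitely presented LEF group is residually finite, and the contrapositive applied to $G\pv H$ finishes the proof.

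There is no serious obstacle here: the corollary is an immediate assembly of Theorem~\ref{thm:LEF}, Proposition~\ref{prop:E}, the simplicity of the infinite alternating group, and the standard LEF lemma recalled above.
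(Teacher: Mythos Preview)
Your proposal is correct and follows essentially the same route as the paper: invoke Theorem~\ref{thm:LEF} for LEF, use the standard fact that a finitely presented LEF group is residually finite (the paper cites Vershik--Gordon, you sketch the argument), and derive a contradiction from the presence of the infinite simple subgroup $\Altf(\ul G \cup \ul H)$ (the paper packages this as Remark~\ref{rem:resfin} via the monolith, you go through Proposition~\ref{prop:E} directly). The content is the same.
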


We close this introduction with a comment on functoriality. The definition of $G\pv H$ seems rather natural, informally, or at least it is an obvious construct to consider from the viewpoint of pointed permutation groups. It is not, however, natural in the mathematical sense for the category of groups. Weaker statements hold, for instance naturality for monomorphisms of infinite groups. Note also that this ``product'' satisfies an evident commutativity, but fails associativity. We refer to Section~\ref{sec:nat} for all these observations, and to Section~\ref{sec:gen} for generalisations of the construction.

Finally, Section~\ref{sec:q} proposes a few questions.

\subsection*{Acknowledgements}
We are very grateful to Yves Cornulier for his comments on an earlier version of this note; he brought to our attention several references that we were not aware of. Likewise, we thank Pierre de la Harpe warmly for his comments and references.

\setcounter{tocdepth}{1}
\tableofcontents

\section{First properties}

In the entire text, we keep the following conventions. Given two groups $G$ and $H$, we write $\ul G$ for a copy of $G$ viewed as a $G$-set under the left multiplication action (i.e.\ $g \ul g' = \ul{g g'}$), and similarly for $\ul H$. It is part of the definition that the sets $\ul G$ and $\ul H$ are disjoint except for $\ul e$. That is, we write $e$ for the neutral element of any group and this causes no ambiguity since the neutral elements of $G$ and of $H$ are both mapped to a single point $\ul e$ in $\ul G \cup \ul H$. As is customary with products, we emphasise that we consider two ``copies'' of $G$ when writing $G\pv G$.

We identify $G$ with the group of permutations of $\ul G \cup \ul H$ that fixes $\ul H \smallsetminus \{\ul e\}$ pointwise and acts as $G$ on $\ul G$; likewise for $H$. Thus $G$ and $H$ are subgroups of $G\pv H$, which is by definition the group generated by these two permutation groups of $\ul G \cup \ul H$.

We can practise these definitions by showing that $g\in G$ and $h\in H$ freely generate a free semigroup in $G\pv H$ as soon as they are both of infinite order:

\begin{proof}[Proof of Proposition~\ref{prop:pong}]
The argument is a \emph{pong lemma}, the pong ball being $\ul e$. Let $g\in G$ and $h\in H$ be elements of infinite order. Given two ``non-negative words'' $W, W'$ in $g$ and $h$, i.e.\ elements of the free monoid on $\{g, h\}$, we denote by $w, w'$ their evaluations in $G\pv H$. Suppose for a contradiction that $W\neq W'$ but that $w=w'$.

There is no loss of generality in taking a pair $W, W'$ minimising the sum of the lengths of these two words. Since the words cannot both be empty, we can assume that $W$ has a rightmost letter and there is again no loss of generality in supposing that this letter is $g$. It follows that $w \ul e$ lies in $\ul G \smallsetminus\{\ul e\}$ because every (non-empty) right prefix of $W$ will map $\ul e$ to some $\ul{g^n}$ with $n>0$. Therefore, $w' \ul e$ also lies in $\ul G \smallsetminus\{\ul e\}$ and in particular $W'$ is also non-empty. If the rightmost letter of $W'$ were $h$, the same argument would show that $w' \ul e$ lies in $\ul H \smallsetminus\{\ul e\}$. Thus, $W'$ also admits $g$ as its rightmost letter; this contradicts the minimality of the pair $W, W'$.
\end{proof}

Given three distinct elements $x,y,z$ of a set, we recall that $(x;y;z)$ denotes the permutation given by the $3$-cycle
$$x \mapsto y \mapsto z \mapsto x$$
(and fixing the remainder of the set). Our convention for commutators is $[g,h]= g h g\inv h\inv$.

The following computation implies in particular already Proposition~\ref{prop:3tor}.

\begin{prop}\label{prop:tricycle}
If $g\in G$ and $h\in H$ are both non-trivial, then $[g, h] = \tricycle{e}{g}{h}$.
\end{prop}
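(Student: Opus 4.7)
The strategy is a direct computation: evaluate the permutation $[g,h] = ghg\inv h\inv$ on each point of $\ul G \cup \ul H$ and verify that only $\ul e$, $\ul g$ and $\ul h$ are moved, with the claimed $3$-cycle behaviour. The conventions recalled just before the statement give the action explicitly: $g$ sends $\ul{g'}\mapsto\ul{gg'}$ on $\ul G$ and fixes $\ul H\smallsetminus\{\ul e\}$ pointwise, and symmetrically for $h$. The hypothesis that $g$ and $h$ are both non-trivial is what prevents accidental collisions with $\ul e$ along the way; this is the only subtle point to keep track of.

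First I would track the three distinguished points. Applying the factors from right to left:
$$\ul e \xmapsto{h\inv} \ul{h\inv} \xmapsto{g\inv} \ul{h\inv} \xmapsto{h} \ul e \xmapsto{g} \ul g,$$
where the second arrow uses $h\neq e$ so that $\ul{h\inv}\in\ul H\smallsetminus\{\ul e\}$ and hence is fixed by $g\inv$; the fourth uses that $\ul e\in\ul G$. Similarly,
$$\ul g \xmapsto{h\inv} \ul g \xmapsto{g\inv} \ul e \xmapsto{h} \ul h \xmapsto{g} \ul h,$$
using $g\neq e$ for the first arrow and $h\neq e$ for the last, and
$$\ul h \xmapsto{h\inv} \ul e \xmapsto{g\inv} \ul{g\inv} \xmapsto{h} \ul{g\inv} \xmapsto{g} \ul e.$$

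Next I would verify that every other point is fixed. For $\ul{g'}\in\ul G$ with $\ul{g'}\notin\{\ul e,\ul g\}$, the element $\ul{g'}$ is fixed by $h\inv$ (since $\ul{g'}\neq\ul e$); then $g\inv$ sends it to $\ul{g\inv g'}$, which is different from $\ul e$ precisely because $g'\neq g$, hence fixed by $h$; finally $g$ sends it back to $\ul{g'}$. The symmetric argument handles $\ul{h'}\in\ul H$ with $\ul{h'}\notin\{\ul e,\ul h\}$. Combining these computations, $[g,h]$ coincides with the $3$-cycle $\tricycle{e}{g}{h}$.

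No real obstacle is expected; the only thing to watch is the repeated use of the non-triviality of $g$ and $h$ to ensure that intermediate images avoid $\ul e$, so that the ``silent'' factor of each step indeed acts trivially.
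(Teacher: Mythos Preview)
Your proof is correct and follows essentially the same approach as the paper: a direct pointwise computation of $ghg\inv h\inv$ on $\ul G\cup\ul H$, checking that points outside $\{\ul e,\ul g,\ul h\}$ are fixed and that the three distinguished points cycle as claimed. The only difference is presentational---you track each factor with arrows and flag explicitly where the non-triviality of $g$ and $h$ is invoked, whereas the paper condenses each chain into a single displayed line.
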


\begin{proof}
    Let $x \in G \smallsetminus \{e, g\}$. Since $H$ acts trivially on $\ul G \smallsetminus \{\ul e\}$ and $g\inv \ul x \neq \ul e$, we have
\begin{align*}
    [g, h] \ul x    &= g h g\inv h\inv \ul x = g h g\inv \ul x \\
                    &= g h \ul{g\inv x} = g \ul{g\inv x} \\
                    &= \ul x.
\end{align*}
A similar computation shows that $[g, h] \ul x = \ul x$ for any $x \in H \smallsetminus \{e, h\}$. Lastly, on the subset $\{\ul e, \ul g, \ul h\}$, the permutation $[g, h]$ acts as a $3$-cycle:
\begin{align*}
    [g, h] \ul e &= ghg\inv \ul{h\inv} = g h \ul{h\inv} = \ul g, \\
    [g, h] \ul g &= ghg\inv \ul{g} = g h \ul{e} = \ul h, \\
    [g, h] \ul h &= ghg\inv \ul{e} = g h \ul{g\inv} = \ul e.
\end{align*}
\end{proof}

Note that the subgroup $[G, H]$ of $G \pv H$ is normal since $G \pv H$ is generated by $G$ and $H$. The above commutator computation elucidates the subgroup $[G, H]$ and establishes the first part of Proposition~\ref{prop:E}:

\begin{prop}\label{prop:containsaltf}
Suppose that $G$ and $H$ are non-trivial groups.

Then $[G, H] = \Altf (\ul G \cup \ul H)$ holds in $G \pv H$.

In particular, $G \pv H$ contains $\Altf (\ul G \cup \ul H)$ as a normal subgroup. Moreover, this subgroup has trivial centraliser in $G \pv H$ unless $\abs{G} = \abs{H} = 2$.
\end{prop}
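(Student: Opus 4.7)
The plan is to prove $[G, H] = \Altf(\ul G \cup \ul H)$ by a two-sided inclusion, then deduce the normality of $\Altf(\ul G \cup \ul H)$ from the normality of $[G,H]$ already noted above, and finally address the centraliser claim via the elementary fact that $\Altf(X)$ has trivial centraliser in $\Sym(X)$ whenever $|X|\ge 4$.

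The inclusion $[G,H]\subseteq \Altf(\ul G \cup \ul H)$ is immediate: any generating commutator $[g,h]$ with $g\in G$, $h\in H$ is either trivial (when $g=e$ or $h=e$) or a $3$-cycle by Proposition~\ref{prop:tricycle}, and therefore belongs to $\Altf(\ul G \cup \ul H)$. For the reverse inclusion, I would rely on the classical fact that $\Altf(\ul G\cup \ul H)$ is generated by the $3$-cycles of the form $(\ul e;\, a;\, b)$ with $a, b$ distinct elements of $(\ul G \cup \ul H)\smallsetminus\{\ul e\}$. When $a\in \ul G\smallsetminus\{\ul e\}$ and $b\in \ul H\smallsetminus\{\ul e\}$, such a cycle is literally a commutator by Proposition~\ref{prop:tricycle}. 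The remaining ``homogeneous'' cases are assembled from two commutators: for distinct $g_1,g_2\in G\smallsetminus\{e\}$ and any $h\in H\smallsetminus\{e\}$ (which exists by non-triviality of $H$), a direct evaluation shows
\[
[g_2,h]^{-1}\,[g_1,h] \;=\; (\ul e;\, \ul g_1;\, \ul g_2),
\]
and symmetrically for cycles supported on $\{\ul e\}\cup\ul H$.

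For the centraliser, the key point is that a permutation $\sigma$ commuting with every $3$-cycle of a set $X$ with $|X|\ge 4$ must be trivial: if $\sigma(x)=y\neq x$, picking $z,w\notin\{x,y\}$ (possible because $|X|\ge 4$) and commuting $\sigma$ with the $3$-cycle $(x;z;w)$ forces $\sigma(x)\in\{x,z,w\}$, a contradiction. Since $G\pv H$ sits inside $\Sym(\ul G\cup \ul H)$, this yields a trivial centraliser precisely when $|\ul G\cup \ul H|=|G|+|H|-1\ge 4$, i.e.\ unless $(\abs{G},\abs{H})=(2,2)$. In the excluded case $\ul G\cup \ul H$ has three elements and $\Altf$ is the abelian group of order $3$, which is its own non-trivial centraliser in $\Sym$, so the exception is necessary. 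I do not anticipate any real obstacle; the one mildly clever step is producing the homogeneous $3$-cycles as two-commutator products, which is a one-line verification once the formula is written down.
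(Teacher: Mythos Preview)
Your proof is correct and follows essentially the same strategy as the paper: reduce to $3$-cycles through $\ul e$, handle the heterogeneous ones via Proposition~\ref{prop:tricycle}, and treat the centraliser via the standard fact that $\Altf(X)$ has trivial centraliser in $\Sym(X)$ for $|X|\ge 4$.

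The one genuine difference is in the homogeneous case. The paper exploits the normality of $[G,H]$: it conjugates $\tricycle{e}{g}{g'}$ by $g^{-1}h\in G\pv H$ to obtain the heterogeneous cycle $\tricycle{e}{g^{-1}g'}{h}$, which is already known to lie in $[G,H]$. You instead write the homogeneous cycle directly as a product of two commutators, $[g_2,h]^{-1}[g_1,h]=(\ul e;\ul g_1;\ul g_2)$, avoiding any appeal to normality or to the transitivity of $G\pv H$ on $\ul G\cup\ul H$. Both manoeuvres are one-line computations; yours is marginally more self-contained, while the paper's makes the structural role of normality visible.
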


\begin{proof}
It is well-known that the group $\Altf(\ul G \cup \ul H)$ is generated by all $3$-cycles. Note on the other hand that the action of $G \pv H$ on $\ul G \cup \ul H$ is transitive by construction. Therefore, any $3$-cycle can be conjugated by an element of $G \pv H$ in such a way that it is of the form $(\ul e; x; y)$. Since $[G, H]$ it normal, we can restrict our attention to such $3$-cycles and show that they are indeed in $[G,  H]$. This already follows from Proposition~\ref{prop:tricycle} when $x$ and $y$ are not both simultaneously in $\ul G$ or in $\ul H$. By symmetry between $G$ and $H$, it therefore only remains to show that $[G, H]$ contains every $3$-cycle $\tricycle{e}{g}{g'}$, where $g$ and $g'$ are distinct non-trivial elements of $G$.

To this end, consider any non-trivial element $h\in H$. Then the element $g\inv h$ of $G \pv H$ maps the triple $(\ul e,\ul g,\ul g')$ to $(\ul h, \ul e, \ul{g\inv g'})$. Therefore it conjugates the $3$-cycle $\tricycle{e}{g}{g'}$ to
$$\tricycle{h}{e}{g\inv g'} = \tricycle{e}{g\inv g'}{h}$$
which we already know to be in $[G, H]$. Thus indeed $[G, H] = \Altf (\ul G \cup \ul H)$.

Finally, since the action of $\Altf (\ul G \cup \ul H)$ on $\ul G \cup \ul H$ is $2$-transitive when $\ul G \cup \ul H$ has at least $4$ elements, its centraliser is trivial unless $\abs{G} = \abs{H} = 2$ (in which case $\Altf (\ul G \cup \ul H)$ is abelian).
\end{proof}

At this point we can completely describe the case of two finite groups.

\begin{thm}\label{thm:finite}
    Let $G$ and $H$ be two non-trivial finite groups. Then
\begin{itemize}
    \item $G \pv H = \Sym (\ul G \cup \ul H)$ if and only if $G$ or $H$ has a nontrivial \emph{cyclic} $2$-Sylow;
    \item $G \pv H = \Alt (\ul G \cup \ul H)$ otherwise.
\end{itemize}
\end{thm}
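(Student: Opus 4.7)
My plan is to pin down $G \pv H$ within the chain
$$\Alt(\ul G \cup \ul H) \subseteq G \pv H \subseteq \Sym(\ul G \cup \ul H).$$
The lower inclusion is Proposition~\ref{prop:containsaltf} (noting that $\ul G \cup \ul H$ is finite, so $\Altf$ coincides with $\Alt$), while the upper one is by construction. Since $[\Sym : \Alt] = 2$, the group $G \pv H$ equals one of these two; it equals $\Sym(\ul G \cup \ul H)$ precisely when it contains an odd permutation. As $G \pv H$ is generated by $G$ and $H$, this is equivalent to asking that some element of $G$ or of $H$ act as an odd permutation on $\ul G \cup \ul H$.

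The next step is to compute the sign of a generator $g \in G$ of order $k$. It fixes $\ul H \smallsetminus \{\ul e\}$ pointwise and acts on $\ul G$ by left multiplication, so it is a product of $|G|/k$ disjoint $k$-cycles (one for each coset of $\langle g \rangle$), of total sign
$$(-1)^{(k-1)|G|/k}.$$
This equals $-1$ precisely when $k$ is even and $|G|/k$ is odd, equivalently when $k$ is divisible by the full $2$-part $2^a$ of $|G|$. Symmetrically for $h \in H$.

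Finally, I invoke the elementary fact that $G$ contains an element whose order is divisible by $2^a$ if and only if a (equivalently every) $2$-Sylow of $G$ is cyclic of order $2^a$. One direction is immediate; for the other, if $g$ has order $2^a l$ with $l$ odd, then $g^l$ has order $2^a$ and generates a cyclic $2$-Sylow. Such an element is automatically nontrivial exactly when $|G|$ is even, whereas for $|G|$ odd every element of $G$ acts evenly since its order is odd. Combining this with the analogous analysis for $H$ yields the dichotomy of the statement. I do not foresee a serious obstacle: the only mildly subtle point is to correctly match the "nontrivial cyclic $2$-Sylow" qualifier in the hypothesis with the parity condition emerging from the sign computation.
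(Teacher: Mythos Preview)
Your proposal is correct and follows essentially the same route as the paper: both arguments sandwich $G\pv H$ between $\Alt$ and $\Sym$ via Proposition~\ref{prop:containsaltf}, reduce to the parity of the regular action of a single element, compute the sign as $(-1)^{(k-1)|G|/k}$, and identify the condition ``$k$ even with $|G|/k$ odd'' with the existence of a nontrivial cyclic $2$-Sylow. Your write-up is in fact slightly more explicit than the paper's in justifying the last equivalence (the paper merely asserts it).
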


In particular, $G \pv H$ never surjects onto $G$ nor $H$ when the latter are finite groups of order at least $3$.

As was pointed out to us by P.~de la Harpe, the results of~\cite{Kang04} can be seen as pertaining to random walks on $G \pv H$ with $G$ and $H$ finite.

\begin{proof}[Proof of Theorem~\ref{thm:finite}]
We know from Proposition~\ref{prop:containsaltf} that $G \pv H$ contains $\Altf( \ul G \cup \ul H)$. Since the latter is a proper maximal subgroup of the symmetric group, everything amounts to understanding when $G$ and $H$ both belong to $\Alt(\ul G \cup \ul H)$, or contrariwise when a finite group contains some element whose associated translation is an odd permutation.

If $g \in G$ has order $k \in \NN$, then the translation by $g$ on $\ul {G}$ has $\abs{G} / k$ orbits of length $k$. Therefore its sign as a permutation of $\ul G$ is
\begin{equation*}
    (-1)^{\frac{\abs{G}}{k}(k - 1)},
\end{equation*}
and this is also its sign as permutation of $\ul G \cup \ul H$. The above exponent is always even unless $k$ is even but $\abs{G} / k$ is not, i.e.\ unless there exists an element whose order is even and has the same $2$-valuation as $\abs{G}$. This can only happen if $G$ has a nontrivial $2$-Sylow subgroup which is cyclic.
\end{proof}

Since our definition of $G\pv H$ presents it as a permutation group of the set $\ul G \cup \ul H$; we record the following permutational consequence of Proposition~\ref{prop:containsaltf}.

\begin{cor}\label{cor:high}
Let $G$ and $H$ be non-trivial groups and suppose that at least one is infinite.

Then the action of $G\pv H$ on $\ul G \cup \ul H$ is highly transitive. Moreover, any faithful $2$-transitive $G\pv H$-set is isomorphic, as a $G\pv H$-set, to $\ul G \cup \ul H$.
\end{cor}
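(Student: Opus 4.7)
Set $\Omega := \ul G \cup \ul H$, which is infinite by hypothesis. For high transitivity I would invoke Proposition~\ref{prop:containsaltf} to get $\Altf(\Omega) \leq G\pv H$, together with the standard fact that the finitary alternating group on an infinite set acts highly transitively on it; this transfers at once to $G\pv H$.

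For the uniqueness claim, let $X$ be a faithful $2$-transitive $G\pv H$-set and write $N := \Altf(\Omega)$. Since $N$ is simple, normal in $G\pv H$, and not contained in the kernel of the action (by faithfulness), it acts faithfully on $X$. The classical dichotomy for a transitive normal subgroup of a $2$-transitive group (e.g.\ Dixon--Mortimer, Theorem~4.3A) then forces $N$ to be either regular or $2$-transitive on $X$. I rule out the regular case as follows: identifying $X$ with $N$ via a basepoint $x_0$, the stabiliser $P := \Stab_{G\pv H}(x_0)$ acts on $N$ by conjugation, and $2$-transitivity of $G\pv H$ on $X$ demands this conjugation action be transitive on $N \smallsetminus \{e\}$; but $G\pv H \leq \Sym(\Omega)$, and conjugation in $\Sym(\Omega)$ preserves cycle type, while $N$ contains finitary permutations of distinct cycle types (a $3$-cycle and a product of two disjoint $3$-cycles, say), a contradiction.

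Hence $N$ acts $2$-transitively on $X$. The key remaining step, and the main obstacle, is to invoke the classical permutation-theoretic fact that for $\Omega$ infinite any faithful $2$-transitive action of $\Altf(\Omega)$ on a set is isomorphic to the natural action on $\Omega$; this can be established by analysing the maximal subgroups of $\Altf(\Omega)$ and showing that only the natural point-stabilisers $\Altf(\Omega \smallsetminus \{\omega\})$ yield $2$-transitive quotients. Granting this, one obtains an $N$-equivariant bijection $\phi : X \to \Omega$.

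To upgrade $\phi$ to $G\pv H$-equivariance, I would note that for any $g \in G\pv H$ the permutation $\phi \circ g \circ \phi^{-1}$ of $\Omega$ and the natural action of $g$ on $\Omega$ differ by an element centralising $N$ in $\Sym(\Omega)$. That centraliser is trivial---any permutation commuting with every $3$-cycle is immediately seen to fix every point of $\Omega$ (this is also consistent with Proposition~\ref{prop:containsaltf})---so $\phi$ is $G\pv H$-equivariant, completing the argument.
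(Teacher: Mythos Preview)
Your high-transitivity argument is correct and identical to the paper's. For the uniqueness statement, the paper does not argue directly: it quotes Proposition~2.4 of Le~Boudec--Matte~Bon, which handles in one stroke the general situation of a permutation group containing $\Altf(\Omega)$ as a normal subgroup with trivial centraliser. Your attempt to unpack this is reasonable in outline, and the endgame via the centraliser of $\Altf(\Omega)$ in $\Sym(\Omega)$ is exactly right; but the middle of the argument has a genuine gap.

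The dichotomy you invoke---that a transitive normal subgroup of a $2$-transitive group is either regular or itself $2$-transitive---is false as stated, and Dixon--Mortimer~4.3A does not assert it. A concrete counterexample: inside $\mathrm{AGL}_n(q)$ acting $2$-transitively on $\mathbf{F}_q^{\,n}$ (with $n\geq 2$ and $q\geq 3$), the normal subgroup $V\rtimes Z(\mathrm{GL}_n(q))$ is transitive, not regular (the stabiliser of $0$ is the nontrivial scalar group), yet not $2$-transitive (scalars are not transitive on nonzero vectors). What \emph{does} follow easily is that $N=\Altf(\Omega)$ is primitive on $X$: any $N$-block system is $G\pv H$-invariant as a partition, and $2$-transitivity leaves only the trivial partitions. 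But primitivity is not enough for your step~(e), since $\Altf(\Omega)$ has faithful primitive actions other than the natural one (for instance on the $k$-element subsets of $\Omega$).

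So two things need repair. First, you must actually establish that $N$ is $2$-transitive on $X$; your cycle-type argument only rules out regularity, and the jump from ``not regular'' to ``$2$-transitive'' is where the work hides. Second, the ``classical fact'' that a faithful $2$-transitive action of $\Altf(\Omega)$ is the natural one is precisely the substance of the cited Le~Boudec--Matte~Bon proposition (or of analogous arguments analysing point stabilisers in $\Altf(\Omega)$); you flag it as the main obstacle, and it is. Absent a self-contained treatment of these two points, the clean route is to invoke the external reference as the paper does.
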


We recall here that an action is called \textbf{$n$-transitive} if the induced action on the set of $n$-tuples of distinct points is transitive, and \textbf{highly transitive} if it is $n$-transitive for every $n\in \NN$. The latter is equivalent to the density of the representation to the full permutation group endowed with the usual pointwise topology.

\begin{proof}[Proof of Corollary~\ref{cor:high}]
The high transitivity is due to the fact that $G\pv H$ contains $\Altf( \ul G \cup \ul H)$, which is already highly transitive.

The uniqueness up to isomorphism is a general fact for permutation groups containing $\Altf( \ul G \cup \ul H)$ as a normal subgroup with trivial centraliser (which was recorded in Proposition~\ref{prop:containsaltf}). This general fact is established in Proposition~2.4 of~\cite{LeBoudec-MatteBon_high_arx}.
\end{proof}

\section{The canonical epimorphism and the monolith}

In view of Theorem~\ref{thm:finite}, the main focus of this text is on infinite groups.

\begin{prop}\label{prop:epi}
Let $G$ and $H$ be any groups. If $G$ is infinite, then there is a canonical epimorphism $\pi_G\colon G\pv H \to G$ which is a left inverse for the inclusion $G\to G\pv H$.

More precisely, $\pi_G$ is given by a canonical identification $(G\pv H)/([G,H]H) \cong G$.
\end{prop}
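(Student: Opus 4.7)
The plan is to consider the subgroup $N := [G,H]\cdot H$ of $G\pv H$ and establish three facts: $N$ is normal in $G\pv H$; the composition of the inclusion $G\hookrightarrow G\pv H$ with the quotient map $G\pv H \twoheadrightarrow (G\pv H)/N$ is surjective; and its kernel is trivial. Granted these, the quotient is canonically identified with $G$, the map $\pi_G$ is this quotient, and it is by construction a left inverse of the inclusion $G \hookrightarrow G \pv H$.

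For normality, I would first invoke Proposition~\ref{prop:containsaltf}, which identifies $[G,H]$ with $\Altf(\ul G \cup \ul H)$ whenever both $G$ and $H$ are non-trivial (the case $H=\{e\}$ is trivial, since then $G\pv H = G$ and $N = \{e\}$). Being the finitely supported alternating group of a set on which $G\pv H$ acts, this subgroup is automatically normal in $G\pv H$. To upgrade normality to $N$, it suffices to check that the generating set $G\cup H$ normalises $N$: elements of $H$ clearly do, and for $g\in G$, $h\in H$ the identity $ghg^{-1} = [g,h]\cdot h$ shows $gHg^{-1}\subseteq [G,H]\cdot H = N$.

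Surjectivity is then immediate: $G\pv H$ is generated by $G\cup H$ and $H\subseteq N$, so the image of $G$ already exhausts the quotient $(G\pv H)/N$.

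The main obstacle is showing $G\cap N = \{e\}$, and this is precisely where the infiniteness of $G$ must enter. Suppose $g\in G$ lies in $N$ and write $g = \sigma h$ with $\sigma\in [G,H] = \Altf(\ul G \cup \ul H)$ and $h\in H$, so that $\sigma = gh^{-1}$. The permutation $gh^{-1}$ acts on $\ul G \smallsetminus \{\ul e\}$ as left multiplication by $g$ alone, since $h^{-1}\in H$ fixes this set pointwise. If $g$ were non-trivial, left multiplication by $g$ on $\ul G$ would be fixed-point free, so $\sigma$ would move every one of the infinitely many points of $\ul G \smallsetminus \{\ul e\}$, contradicting $\sigma\in \Altf(\ul G \cup \ul H)$. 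Hence $g = e$, which closes the argument.
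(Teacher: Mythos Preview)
Your argument is correct and follows essentially the same approach as the paper: both hinge on the observation that every element of $[G,H]H$ has support meeting $\ul G$ only in a finite set (since $[G,H]$ consists of finitely supported permutations and $H$ fixes $\ul G\smallsetminus\{\ul e\}$ pointwise), so that the infiniteness of $G$ forces $G\cap[G,H]H=\{e\}$. The paper packages this by directly defining $\pi_G(\tau)$ as the unique $g\in G$ agreeing with $\tau$ on a cofinite subset of $\ul G$, whereas you work through the quotient and verify injectivity of $G\to(G\pv H)/N$, but the mathematical content is the same.
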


This proposition can of course be applied to both factors when both are infinite. Then $\pi=(\pi_G, \pi_H)$ provides the canonical epimorphism discussed in the introduction, as follows.

\begin{cor}\label{cor:epi}
Let $G$ and $H$ be infinite groups. There is a canonical epimorphism $\pi\colon G \pv H \twoheadrightarrow G \times H$ such that $\pi(g)=(g, e)$ and $\pi(h) = (e,h)$ for all $g\in G$ and $h\in H$. This epimorphism is given by a canonical identification $(G\pv H)/[G, H] \cong G\times H$. \qed
\end{cor}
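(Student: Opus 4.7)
The plan is to combine the two epimorphisms furnished by Proposition~\ref{prop:epi} into a single map and then compute its kernel. Concretely, I would take the canonical projections $\pi_G$ and $\pi_H$ given by the proposition (each of which needs the corresponding factor to be infinite, hence the hypothesis on both $G$ and $H$) and set $\pi = (\pi_G, \pi_H) \colon G \pv H \to G \times H$.

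Verifying the boundary conditions on the generators is immediate: $\pi_G$ is the identity on $G$ by Proposition~\ref{prop:epi} and kills $H$ since $H \subseteq [G,H]\cdot H = \ker \pi_G$; the situation is symmetric for $\pi_H$. This yields $\pi(g) = (g,e)$ and $\pi(h) = (e,h)$, which in turn gives surjectivity: the image of $\pi$ is a subgroup of $G\times H$ containing $G \times \{e\}$ and $\{e\} \times H$, hence all of $G \times H$, because $G\pv H$ is generated by $G \cup H$.

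The only real work is identifying $\ker \pi$ with $[G,H]$. By Proposition~\ref{prop:epi},
\[ \ker \pi = \ker \pi_G \cap \ker \pi_H = ([G,H]\cdot H) \cap ([G,H]\cdot G). \]
The containment $[G,H] \subseteq \ker \pi$ is clear. For the reverse inclusion, an element of the intersection can be written as $x = c h = c' g$ with $c, c' \in [G,H]$, $g \in G$, $h \in H$; then $g h^{-1} = c'^{-1} c \in [G,H]$, and applying $\pi_G$ (which vanishes on $[G,H]$ and on $H$ but restricts to the identity on $G$) forces $g = e$, and symmetrically $h = e$. Hence $x = c \in [G,H]$. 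This intersection computation is the main—but entirely routine—step, which is presumably why the paper closes the corollary immediately with \qed.
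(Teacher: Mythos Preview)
Your proof is correct and follows exactly the approach the paper intends: the text immediately preceding the corollary explicitly sets $\pi = (\pi_G, \pi_H)$ from Proposition~\ref{prop:epi}, and the \qed indicates that the remaining verifications (surjectivity and the identification of $\ker\pi$ with $[G,H]$) are routine. Your explicit kernel computation via $\ker\pi = ([G,H]H)\cap([G,H]G)$ and the application of $\pi_G$, $\pi_H$ to force $g=h=e$ is precisely the kind of detail the paper suppresses.
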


\begin{proof}[Proof of Proposition~\ref{prop:epi}]
Consider the support $\Supp(\sigma)$ in $\ul G \cup \ul H$ of an element $\sigma$ of the group $[G,H]H$. Since $H$ acts trivially on $\ul G\smallsetminus \{\ul e\}$ and since every element of $[G,H]$ is finitely supported by Proposition~\ref{prop:tricycle}, it follows that $\Supp(\sigma)\cap \ul G$ is finite.

Since on the other hand $\ul G$ is infinite, any two representatives of a coset $\tau [G,H]H$ (where $\tau\in G\pv H$) coincide on a cofinite subset of $\ul G$, and thus coincide there with the multiplication by an element $g\in G$. This defines a map $\tau\mapsto g$ which is a well-defined homomorphism and is the identity on $G<G\pv H$.
\end{proof}

In conclusion, if we combine Proposition~\ref{prop:containsaltf} with Corollary~\ref{cor:epi} and with the fact that $\Altf (\ul{G} \cup \ul{H})$ is a simple group, we obtain the extension~(E) of the introduction (the second part of Proposition~\ref{prop:E}) together with an additional specification:

\begin{prop}\label{prop:monolith}
If $G$ and $H$ are infinite, then the kernel of the canonical epimorphism $G \pv H \twoheadrightarrow G \times H$ is $\Altf (\ul{G} \cup \ul{H})$. Moreover, any nontrivial normal subgroup of $G \pv H$ contains this kernel.
\end{prop}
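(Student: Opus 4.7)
The first assertion is essentially free from earlier results. My plan is simply to combine Proposition~\ref{prop:containsaltf}, which identifies the commutator subgroup $[G,H]$ with $\Altf(\ul G \cup \ul H)$, together with Corollary~\ref{cor:epi}, which identifies $[G,H]$ with the kernel of the canonical epimorphism $\pi$. Together these give $\ker \pi = \Altf(\ul G \cup \ul H)$, as claimed.

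For the monolith property, my plan is the classical two-normal-subgroups argument. Given a nontrivial normal subgroup $N \trianglelefteq G \pv H$, I would look at $M := N \cap \Altf(\ul G \cup \ul H)$, which is normal inside $\Altf(\ul G \cup \ul H)$. Since $G$ and $H$ are infinite, the underlying set $\ul G \cup \ul H$ is infinite, and the finitary alternating group on an infinite set is a (classical) simple group. Hence $M$ is either trivial or all of $\Altf(\ul G \cup \ul H)$, and our task reduces to ruling out the first alternative. Suppose therefore that $M = \{e\}$. Because both $N$ and $\Altf(\ul G \cup \ul H)$ are normal in $G \pv H$, every commutator $[n,a]$ with $n \in N$ and $a \in \Altf(\ul G \cup \ul H)$ belongs to $N \cap \Altf(\ul G \cup \ul H) = \{e\}$; in other words, $N$ centralises $\Altf(\ul G \cup \ul H)$. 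But the \emph{moreover} part of Proposition~\ref{prop:containsaltf} tells us that this centraliser is trivial (the exceptional case $|G|=|H|=2$ being excluded since $G$ and $H$ are infinite), which forces $N = \{e\}$, contradicting our assumption. Hence $M = \Altf(\ul G \cup \ul H) = \ker \pi$ and $N$ contains this kernel.

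The only external ingredient is the classical simplicity of the finitary alternating group on an infinite set, and everything else is a bookkeeping assembly of results already established in the preceding section. I therefore do not anticipate any real obstacle in writing out the argument.
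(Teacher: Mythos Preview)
Your proposal is correct and follows exactly the same route as the paper: identify $\ker\pi$ via Proposition~\ref{prop:containsaltf} and Corollary~\ref{cor:epi}, then for the monolith statement use simplicity of $\Altf(\ul G\cup\ul H)$ to force $N\cap\Altf(\ul G\cup\ul H)$ to be trivial or everything, and in the trivial case invoke the centraliser clause of Proposition~\ref{prop:containsaltf} to reach a contradiction. The paper's proof is just a more compressed version of what you wrote.
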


The latter statement means that $G\pv H$ is a \textbf{monolithic} group, with monolith $\Altf (\ul{G} \cup \ul{H})=[G,H]$. Thus any proper quotient of $G \pv H$ is a quotient of $G \times H$.

\begin{proof}[Proof of Proposition~\ref{prop:monolith}]
It only remains to prove that every non-trivial normal subgroup $N$ of $G\pv H$ contains $\Altf (\ul{G} \cup \ul{H})$. If not, the simplicity of $\Altf (\ul{G} \cup \ul{H})=[G,H]$ implies that it meets $N$ trivially, which entails that these two groups commute. As noted in Proposition~\ref{prop:containsaltf}, this implies that $N$ is trivial.
\end{proof}

\begin{rem}\label{rem:resfin}
The existence of this monolith shows in particular that ${G \pv H}$ is \emph{never} residually finite, and in particular never finitely generated linear ($G$ and $H$ infinite). This stands in contrast to both free and direct products, which preserve residual finiteness and linearity (in equal characteristic). This is clear for direct products; for free products, the former is a theorem of Gruenberg~\cite{Gruenberg57} and the latter of Nisnewitsch~\cite{Nisnewitsch}, see also~\cite{Wehrfritz_free}.
\end{rem}

In conclusion of this section, we have indeed placed $G \pv H$ inbetween the free and the direct product when both $G$ and $H$ are infinite:
$$G * H \lraa G\pv H\lraa G\times H.$$
Moreover, we can write $G\pv H$ as iterated semidirect products
$$G\pv H \cong ([G,H]\rtimes H) \rtimes G  \cong ([G,H]\rtimes G) \rtimes H,$$
but we shall prove next that $G \pv H \twoheadrightarrow G \times H$ itself usually does not split.

\section{The extension usually does not split}

\begin{thm}\label{thm:end}
Let $G,H$ be one-ended finitely generated groups.

Then the canonical epimorphism $G\pv H \twoheadrightarrow G\times H$ does not split.
\end{thm}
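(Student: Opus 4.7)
Assume for contradiction a splitting $s\colon G\times H\to G\pv H$, and write $s(g,e)=\alpha_g g$, $s(e,h)=\beta_h h$ with $\alpha_g,\beta_h\in\Altf(\ul G\cup\ul H)$. Applying the commutation $\alpha_g g \beta_h h=\beta_h h\alpha_g g$ at the base point $\ul e$, and using that $g$ fixes $\ul h$ for $h\neq e$ (and symmetrically), yields
\[\alpha_g\bigl(g\cdot\beta_h\ul h\bigr)=\beta_h\bigl(h\cdot\alpha_g\ul g\bigr).\]
If the non-trivial pair $(g,h)$ satisfies the four \emph{genericity} conditions $\alpha_g\ul g=\ul g$, $\beta_h\ul h=\ul h$, $\ul h\notin\Supp\alpha_g$ and $\ul g\notin\Supp\beta_h$, the two sides collapse to $\ul h$ and $\ul g$ respectively, forcing the impossibility $\ul g=\ul h$. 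The whole proof therefore reduces to producing one such pair.

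The last two genericity conditions can be arranged uniformly. Commutation of $s(G)$ with $s(H)$ forces $s(G)$ to preserve the $s(H)$-orbit structure on $\ul G\cup\ul H$; a direct analysis, exploiting that $s(h)\ul e=\beta_h\ul h$ agrees with $\ul h$ as soon as $\ul h\notin\Supp\beta_h$, shows $\ul H$ is a union of $s(H)$-orbits and hence set-wise stabilised by $s(G)$. The restriction $g\mapsto\alpha_g|_{\ul H\smallsetminus\{\ul e\}}$ is then a genuine homomorphism $G\to\Symf(\ul H\smallsetminus\{\ul e\})$; finite generation of $G$ together with the sub-additivity of supports under composition forces its image into $\Sym(F_H)$ for some common finite $F_H\subseteq\ul H$, and symmetrically a common finite $F_G\subseteq\ul G$ controls $\beta$. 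Choosing $\ul g\notin F_G$ and $\ul h\notin F_H$ then makes the third and fourth genericity conditions automatic.

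The remaining conditions $\alpha_g\ul g=\ul g$ and $\beta_h\ul h=\ul h$ are where the one-endedness of $G$ and $H$ is essential: it implies that the residual finite-image cocycles produced above are coboundaries, so after conjugating $s$ by a suitable element of $\Altf$ one may arrange $\Supp\alpha_g\subseteq\ul G\smallsetminus\{\ul e\}$ and $\Supp\beta_h\subseteq\ul H\smallsetminus\{\ul e\}$, which make these conditions automatic once $\ul g$ and $\ul h$ avoid a finite set. This rigidity admits a direct Stallings-type argument using one-endedness, and is alternatively captured by Cornulier's non-realisability theorem for near actions (Theorem~7.C.1 of~\cite{Cornulier_near_arx}) cited in the introduction, whose hypotheses are exactly satisfied by the near action of $G\times H$ on $\ul G\cup\ul H$ coming from~(E). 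This rigidity step---linking the finite combinatorics of $\alpha,\beta$ to the global one-ended geometry of $G$ and $H$---is the main obstacle of the proof; everything else is bookkeeping around the $3$-cycle identity $[g,h]=(\ul e;\ul g;\ul h)$ of Proposition~\ref{prop:tricycle}.
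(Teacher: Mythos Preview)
Your opening computation is fine: evaluating the commutation relation at $\ul e$ does yield $\alpha_g(g\cdot\beta_h\ul h)=\beta_h(h\cdot\alpha_g\ul g)$, and the four genericity conditions do collapse the two sides to $\ul h$ and $\ul g$. The second paragraph is shakier than you suggest---the assertion that $\ul H$ is a union of $s(H)$-orbits is not justified (the $s(H)$-orbit of $\ul e$ can perfectly well contain points of $\ul G$), so your route to the homomorphism $g\mapsto\alpha_g|_{\ul H\smallsetminus\{\ul e\}}$ does not work as written. The conclusion you need (a common finite $F_H$ outside which every $s(g)$ fixes $\ul H$ pointwise) is nonetheless true and follows directly from finite generation, as the paper does.

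The real gap is your third paragraph, which is where the entire content of the theorem lies. You assert that after conjugating by a single $\sigma\in\Altf$ one can arrange \emph{simultaneously} $\Supp\alpha_g\subseteq\ul G\smallsetminus\{\ul e\}$ for all $g$ and $\Supp\beta_h\subseteq\ul H\smallsetminus\{\ul e\}$ for all $h$, but you prove nothing: you only name ``a direct Stallings-type argument'' and invoke Cornulier's Theorem~7.C.1. The latter is circular, since (as the paper itself notes) that theorem already gives the full non-splitting statement; it cannot serve as a lemma inside an independent proof. As for the former, notice that your normalisation amounts to asking for a single $\sigma$ with $\sigma^{-1}\ul G$ invariant under $s(G)$ and $\sigma^{-1}\ul H$ invariant under $s(H)$. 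One-endedness of $G$ does produce an $s(G)$-invariant set commensurated to $\ul G$ (namely a regular $s(G)$-orbit), and likewise for $H$; but these two sets are \emph{disjoint}, whereas $\sigma^{-1}\ul G$ and $\sigma^{-1}\ul H$ must share exactly the point $\sigma^{-1}\ul e$. So the simultaneous normalisation you posit is precisely what fails, and the obstruction is not ``bookkeeping'' but the heart of the matter.

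The paper's proof proceeds quite differently and never attempts a normalisation. One-endedness is used concretely: removing from a Cayley graph of $G$ the finitely many edges where $\tilde s\,\ul g\neq\ul{sg}$ leaves a cofinite connected component, so $g\mapsto\tilde g^{-1}\ul g$ is eventually constant, equal to some $x\in\ul G\cup\ul H$; likewise one obtains $y$ for $H$. One then shows the orbits $\tilde G x$ and $\tilde H y$ are free and disjoint, and a finite counting argument on the exceptional sets $A,B$ (tracking where $\ul e$ must lie) gives the contradiction. That endgame is delicate and cannot be black-boxed.
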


\noindent
We recall that one-ended groups are infinite, so that the above projection is indeed defined in view of Corollary~\ref{cor:epi}.

The proof begins with an argument that we borrow from~\cite{Cornulier_near_arx}; after that, we let $G$ and $H$ compete for the insufficient space in $\ul G \cup \ul H$. As mentioned in the introduction, Theorem~\ref{thm:end} can alternatively be deduced from Theorem~7.C.1 in~\cite{Cornulier_near_arx}.

\begin{proof}[Proof of Theorem~\ref{thm:end}]
We suppose for a contradiction that there is a lifting $G\times H\to G\pv H$ and we denote by $\tilde g, \tilde h\in G\pv H$ the images of $g\in G$ and $h\in H$ under this lifting; in particular $\tilde g$ commutes with $\tilde h$.

Consider the Cayley graph of $G$ associated to some finite symmetric generating set $S\se G$. For definiteness, let us choose the left Cayley unoriented simple graph, that is, the edges are all sets $\{g, sg\}$ with $g\in G$, $s\in S$ and $s\neq e$. Let further $\Gamma$ be the graph obtained by deleting every edge $\{g, sg\}$ for which $\tilde s \ul g \neq s \ul g$ (recalling that the right hand side is simply $\ul{sg}$).

In view of the definition of the projection $G\pv H \to G\times H$, we see that for any given $s$, only finitely many edges $\{g, sg\}$ are removed. Since $S$ is finite, we have only removed finitely many edges in the definition of $\Gamma$. By definition of one-endedness, $\Gamma$ has a connected component with finite complement.

Consider the map $\chi\colon G \to \ul G \cup \ul H$ defined by $\chi(g) = \tilde g\inv \ul g$. Then $\chi$ is constant on the connected components of $\Gamma$. Therefore, there is $x\in \ul G \cup \ul H$ such that $\chi(g) = x$ holds outside a finite set of elements $g\in G$. In other words, the set
$$A = \left\{ g\in G : \tilde g x \neq \ul g \right\}$$
is finite. We now consider the $\wt G$-orbit of $x$ in $\ul G \cup \ul H$ and claim that this orbit is regular, i.e.\ with trivial stabilisers. Indeed, suppose $\tilde k x = x$ for some $k\in G$. Since $G$ is infinite, we can choose $g \notin A\cup A k\inv$. Then $\ul g = \tilde g x = \tilde g \tilde k x =\wt{g k} x = \ul{gk}$ and hence $k=e$, as claimed.

It follows that this orbit decomposes into disjoint sets as
$$\wt G x = \wt A x \sqcup  (\ul G\smallsetminus \ul A).$$
In conclusion, $\wt A x$ contains exactly $|A|$ elements and lies in $\ul A \cup \ul H$.

We now apply the same arguments with $G$ and $H$ interchanged, providing $y\in  \ul G \cup \ul H$ and a finite set $B\se H$ with all the corresponding statements.

We further record that since $G$ is finitely generated, there is a finite set $V\se H$ such that $\wt G$ acts trivially on $\ul H\smallsetminus \ul V$. Likewise, $\wt H$ acts trivially on $\ul G\smallsetminus \ul U$ for some finite set $U\se G$.

We claim that the orbits $\wt G x$ and $\wt H y$ are disjoint. Indeed, suppose for a contradiction that $z$ belongs to both. Since $G$ is infinite, we can choose $g\in G$ such that $\tilde g z$ is in $\ul G\smallsetminus (\ul U\cup \{\ul e\})$. Likewise, we can choose $h\in H$ with $\tilde h z$ in $\ul H\smallsetminus (\ul V\cup \{\ul e\})$. Then $\wt{hg}z = \tilde g z \in \ul G\smallsetminus \{\ul e\}$ but this element is also $\wt{gh}z = \tilde h z \in \ul H\smallsetminus \{\ul e\}$, a contradiction confirming the claim.

At this point it follows that $\wt A x$ lies in $\ul A \cup \ul B$, and so does $\wt B y$. The sets $\wt A x$ and $\wt B y$ are disjoint and contain $|A|$, respectively $|B|$, elements. This forces the union $\ul A \cup \ul B$ to be disjoint as well and to coincide with $\wt A x \cup \wt B y$. 

Since $\wt G x$ and $\wt H y$ cannot both contain $\ul e$, we can assume $\ul e \notin \wt G x$, which implies $\ul e \neq \tilde e x$ and thus $e\in A$. Now on the one hand $\ul A \cup \ul B = \wt A x \cup \wt B y$ implies $\ul e \in \wt B y$. But on the other hand, $\ul A$ and $\ul B$ being disjoint forces $e\notin B$, which means $\ul e = \tilde e y = y$. Taken together, $\ul e \in \wt B \ul e$. Since the orbit $\wt H \ul e$ has trivial stabilisers, this shows $e\in B$, a contradiction.
\end{proof}

For infinite groups that are not one-ended, the statement of Theorem~\ref{thm:end} can fail. Since the number of ends is then either two or infinite, the next proposition illustrates both cases according to whether the rank below is $n=1$ or $n\geq 2$.

\begin{prop}
Let $G,H$ be infinite groups.

If one of them is a free group on $n\geq 1$ generators, then the canonical projection $G\pv H \to G\times H$ splits.

\end{prop}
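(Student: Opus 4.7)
Since $G = F_n$ is free on generators $a_1, \ldots, a_n$, producing a section $\sigma$ of $\pi \colon G\pv H \twoheadrightarrow G\times H$ reduces to choosing, for each $i$, an element $\tilde a_i \in G\pv H$ which lifts $(a_i, e)$ under $\pi$ and commutes with the embedded copy of $H$ in $G\pv H$. The freeness of $F_n$ then extends $a_i \mapsto \tilde a_i$ uniquely to a homomorphism $G \to G\pv H$, and combined with the inclusion $H \hookrightarrow G\pv H$ and the commutation property this assembles into a homomorphism $\sigma \colon G\times H \to G\pv H$; the identity $\pi\circ\sigma = \Id$ then holds on generators, hence everywhere.

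The plan is to define
$$\tilde a_i := \tricycle{a_i}{e}{x_i} \cdot a_i,$$
where $x_i$ is any element of $G \smallsetminus \{e, a_i\}$ (available since $G$ is infinite) and the $3$-cycle is viewed inside $\Altf(\ul G \cup \ul H) \se G\pv H$ via Proposition~\ref{prop:containsaltf}. Morally, this is the smallest even correction to the obvious lift $a_i$: the transposition exchanging $\ul e$ with $\ul{a_i}$ would suffice pointwise but is odd, which forces us to route the correction through a third point $\ul{x_i}$. Two verifications then remain. First, $\pi(\tilde a_i) = (a_i, e)$ is immediate since the $3$-cycle lies in $\Altf(\ul G \cup \ul H) = \ker\pi$ (Proposition~\ref{prop:monolith}). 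Second, because $a_i$ fixes $\ul H \smallsetminus \{\ul e\}$ pointwise and the $3$-cycle is supported inside $\ul G$, a direct check on the point $\ul e$ shows that $\tilde a_i$ fixes $\ul H$ pointwise and therefore preserves $\ul G \smallsetminus \{\ul e\}$. Commutation with each $h \in H$ is then automatic: on $\ul H$ the element $\tilde a_i$ acts trivially, while on $\ul G \smallsetminus \{\ul e\}$ it is $h$ which acts trivially.

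There is no real obstacle; the argument succeeds precisely because no relations among the $a_i$ have to be respected. The same recipe breaks down the moment $G$ admits non-trivial relations between its generators (e.g.\ $G = \ZZ^2$), in accordance with Theorem~\ref{thm:end}.
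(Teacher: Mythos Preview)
Your proof is correct and is essentially the paper's argument: your lift $\tilde a_i = \tricycle{a_i}{e}{x_i}\cdot a_i$ is exactly the permutation the paper constructs (there written as $\varphi_g \circ g$ with $\varphi_g = (g;e;g')$), and the verification that it fixes $\ul H$ pointwise and hence commutes with $H$ is the same. If anything your packaging is slightly tidier, since by writing $\tilde a_i$ directly as a product of elements already known to lie in $G\pv H$ you avoid the paper's separate check that the cocycle $\varphi_g$ stays even for arbitrary words in the generators.
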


\begin{proof}
Let $F_n$ be a free group on $n\geq 1$ generators. We shall start by defining a $F_n$-action on $F_n$, which we denote by $(g, q)\mapsto \tilde g q$ for $g,q\in F_n$. We further write $\fhi_g$ for the map $F_n\to F_n$ defined by $\fhi_g(q) = \tilde g( g\inv q)$. Since $F_n$ is free, we can specify the action by defining it on a set of free generators. Given a generator $g$, we choose any $g'\neq e,g$ in $F_n$. We define $\tilde g$ by $\tilde g e=e$, $\tilde g g\inv = g'$, $\tilde g (g\inv g') =g$ and $\tilde g q = gq$ in all other cases. Then the map $\fhi_g$ is the cycle $(g; e; g')$ and hence the permutation $\tilde g$ has the following two properties:

\begin{enumerate}
\item $\tilde g e = e$,
\item $\fhi_g$ is a finitely supported permutation of $F_n$ which is even.
\end{enumerate}

Now we observe that these two conditions hold in fact for all $g\in F_n$; this is clear for the first one. As to the second condition, it is inherited from the generators because of the relation
$$\fhi_{ab} = \fhi_a \circ a \circ \fhi_b \circ a\inv$$
which holds for all $a,b\in F_n$.

Turning to the setting of the proposition, suppose now that $G=F_n$ and consider the $G$-action on $\ul G \cup \ul H$ given by the above on $\ul G$ and trivial on $\ul H$; this is well-defined since $\ul e$ is fixed. The second condition shows that this gives a lift $\wt G$ of $G$ in $G\pv H$ because of the construction of the epimorphism $G\pv H\to G$ in the proof of Proposition~\ref{prop:epi}. Since this lift commutes with (the canonical image of) $H$, we have indeed a lift of $G\times H$.
\end{proof}

By construction, each factor $G$ and $H$ in $G\pv H$ lies above the corresponding factor in $G\times H$. In other words, the lifting obstruction of Theorem~\ref{thm:end} really concerns the simultaneous lifting of both factors. Nonetheless, it can be strengthened to hold for the diagonal subgroup when $G=H$:

\begin{thm}\label{thm:end-diago}
Let $G$ be a one-ended finitely generated group.

Then the diagonal subgroup in $G\times G$ cannot be lifted to $G\pv G$.
\end{thm}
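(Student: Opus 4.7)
My plan is to adapt the two-base-point pong argument of Theorem~\ref{thm:end} to the diagonal setting. Let $\ul{G}_1, \ul{G}_2$ denote the two copies of $G$ acted on by $G\pv G$, glued along $\ul e = \ul e_1 = \ul e_2$, with elements written $\ul g_i \in \ul{G}_i$ for $g\in G$. Suppose for contradiction that $\delta\colon G\to G\pv G$ lifts the diagonal and write $\tilde g = \delta(g)$; the lifting condition forces each $\tilde g$ to act on \emph{both} $\ul{G}_1$ and $\ul{G}_2$ asymptotically as left multiplication by $g$.

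Fix a finite symmetric generating set $S\se G$. From the Cayley graph of $G$, delete every edge $\{g, sg\}$ for which either $\tilde s\ul g_1 \neq \ul{sg}_1$ or $\tilde s\ul g_2 \neq \ul{sg}_2$. Only finitely many edges are deleted, so by one-endedness a single cofinite connected component $C$ remains. On $C$, both maps $g\mapsto \tilde g\inv \ul g_1$ and $g\mapsto \tilde g\inv \ul g_2$ are constant, with respective values $x, y\in \ul{G}_1\cup \ul{G}_2$; letting $A, B\se G$ denote the respective finite exceptional sets, we have $\tilde g x = \ul g_1$ for $g\notin A$ and $\tilde g y = \ul g_2$ for $g\notin B$.

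The structural analysis now imitates Theorem~\ref{thm:end} with the two acting subgroups there replaced by the single $\tilde G = \delta(G)$ acting on two base points. First $x\neq y$: otherwise any $g\neq e$ outside $A\cup B$ (available since $G$ is infinite) satisfies $\ul g_1 = \tilde g x = \tilde g y = \ul g_2$, forcing $g=e$. The orbits $\tilde G x$ and $\tilde G y$ are then both regular (same stabilizer-killing argument) and disjoint: an equation $y=\tilde k x$ would force $\ul g_2 = \ul{(gk)}_1$ cofinitely, impossible. Combining the inclusions $\ul{G}_1\smallsetminus\ul{A}_1\se \tilde G x$ and $\ul{G}_2\smallsetminus\ul{B}_2\se \tilde G y$ with disjointness squeezes the finite remainders $\tilde A x$ and $\tilde B y$ into $\ul{A}_1\cup\ul{B}_2$; as their sizes $|A|+|B|$ already fill that set, a cardinality count yields $e\notin A\cap B$ together with the exact partition $\tilde A x\sqcup\tilde B y = \ul{A}_1\cup\ul{B}_2$.

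The contradiction comes from locating $\ul e$. By the orbit disjointness, $\ul e$ lies in at most one of the two orbits; by the obvious symmetry swapping the two copies of $G$, I may assume $\ul e\notin \tilde G x$. Then $x = \tilde e x \neq \ul e$ forces $e\in A$, and combined with $e\notin A\cap B$ this yields $e\notin B$, hence $y = \tilde e y = \ul e$. Therefore $\ul e\in \ul{A}_1\cup\ul{B}_2 = \tilde A x \sqcup \tilde B y$ but $\ul e\notin \tilde A x \se \tilde G x$, so $\ul e\in \tilde B y$, meaning $\ul e = \tilde b \ul e$ for some $b\in B$. Regularity of the orbit $\tilde G \ul e = \tilde G y$ then forces $b = e$, contradicting $e\notin B$. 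The main obstacle, relative to Theorem~\ref{thm:end}, is ensuring that collapsing the two acting groups into one preserves the disjointness--cardinality squeeze; this works because the two base points $x, y$ are attached to \emph{distinct} copies of $G$, preserving the geometric separation that the two-group argument enjoyed.
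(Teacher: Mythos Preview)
Your proof is correct and follows essentially the same approach as the paper's: both adapt the argument of Theorem~\ref{thm:end} by replacing the two acting subgroups with the single diagonal lift acting on two base points $x_1,x_2$ (your $x,y$), and both reach the contradiction by locating $\ul e$ in the partition $\wt{A_1}x_1\sqcup\wt{A_2}x_2=\ul{A_1}\sqcup\ul{A_2}$. Your orbit-disjointness argument via $\ul g_2=\ul{(gk)}_1$ and your explicit cardinality count for $e\notin A\cap B$ are minor rephrasings of the paper's reasoning, not substantive departures.
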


Although this statement is of course stronger than the particular case $H=G$ of Theorem~\ref{thm:end}, it will be sufficient to indicate the points where the proof differs from the latter. Again, the statement could also be deduced from results about non-realisability of near actions, specifically using Theorem~7.C.1 in~\cite{Cornulier_near_arx}.

\begin{proof}[Proof of Theorem~\ref{thm:end-diago}]
In order to minimise confusion, we consider two copies $G_1$, $G_2$ of $G$; for each $g\in G$ we write $g_1$ for the corresponding element $(g, e)$ of $G_1\times G_2$ and similarly $g_2=(e, g)$. We suppose for a contradiction that there is a homomorphism $g\mapsto \tilde g$ from $G$ to $G_1 \pv G_2$ such that the image of $\tilde g$ in $G_1\times G_2$ is the diagonal element $g_1 g_2$.

The proof uses the arguments given for Theorem~\ref{thm:end} with minor changes only. We begin with a left Cayley graph for $G$ with respect to a  finite symmetric generating set $S\se G$ and delete every edge $\{g, sg\}$ for which either $\tilde s \ul {g_1} \neq  \ul {s_1 g_1}$ or $\tilde s \ul {g_2} \neq  \ul {s_2 g_2}$ (or both). We have two maps $\chi_i \colon G \to \ul G \cup \ul H$ defined by $\chi_i (g) = \tilde g\inv \ul {g_i}$. The one-end reasoning followed for Theorem~\ref{thm:end} shows that for $i=1,2$ there is $x_i\in \ul{G_1} \cup \ul {G_2}$ (with no indication in which copy of $\ul G$ it lies) such that the set $A_i\subseteq G$ defined by
$$A_i = \left\{ g\in G : \tilde g x_i \neq \ul{g_i}\right\}$$
is finite. We check as above that the $\wt G$-orbits of both $x_i$ are regular. This time the two orbits are disjoint simply because they are orbits of the same group $\wt G$ and cannot coincide since $\wt G x_i$ contains only finitely many points outside $\ul{G_i}$.

At this point we can deduce exactly as for Theorem~\ref{thm:end} that the sets $\wt{A_i} x_i$ are disjoint and lie in $\ul{A_1} \cup \ul {A_2}$, where we wrote simply $\ul{A_i}$ for $\ul{(A_i)_i}$. The end of the proof follows the same strategy, obtaining a contradiction based on the location of $\ul e$ in $\wt{A_1} x_1 \sqcup \wt{A_2} x_2 = \ul{A_1} \sqcup \ul {A_2}$.
\end{proof}

\section{Amenability}\label{sec:amen}
We first record the following basic stability result.

\begin{lem}\label{lem:amen}
If $G$ and $H$ are amenable groups, then so is $G\pv H$.
\end{lem}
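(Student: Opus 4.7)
The plan is to exhibit $G\pv H$ as an extension of an amenable group by an amenable group. The two key inputs are already in hand: Proposition~\ref{prop:containsaltf} identifies $[G,H]$ with $\Altf(\ul G\cup\ul H)$ (when $G$ and $H$ are nontrivial), and the classical facts that locally finite groups are amenable and that extensions of amenable groups by amenable groups remain amenable.

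First I would dispose of the trivial cases. If one of $G,H$ is trivial, then $G\pv H$ is just the other factor and there is nothing to prove. So I assume that both $G$ and $H$ are nontrivial.

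Next I would set up the extension. By Proposition~\ref{prop:containsaltf}, the subgroup $N := [G,H] = \Altf(\ul G\cup\ul H)$ is normal in $G\pv H$. Since $G\pv H$ is generated by $G$ and $H$, and since the images of $G$ and $H$ commute in the quotient $(G\pv H)/N$, that quotient is a homomorphic image of $G\times H$. (When both $G$ and $H$ are infinite we already know from Corollary~\ref{cor:epi} that $(G\pv H)/N\cong G\times H$; in the mixed finite/infinite case it is only a quotient, but that suffices.) Assuming $G$ and $H$ amenable, $G\times H$ is amenable, hence so is $(G\pv H)/N$.

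For the kernel, $N = \Altf(\ul G\cup\ul H)$ is the directed union of its finite subgroups $\Altf(F)$ over finite $F\se \ul G\cup\ul H$, hence locally finite and therefore amenable. Finally, an extension of an amenable group by an amenable group is amenable, so $G\pv H$ is amenable. There is no real obstacle here: the entire argument is a direct assembly of Proposition~\ref{prop:containsaltf} with two standard permanence properties of amenability, and in particular it needs no case distinction between the settings where $G$ or $H$ is finite.
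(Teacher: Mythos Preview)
Your proof is correct and follows the same extension-of-amenable-by-amenable strategy as the paper, with $[G,H]=\Altf(\ul G\cup\ul H)$ as the locally finite kernel. The only difference is cosmetic: the paper splits into three cases (both infinite, both finite, mixed) and invokes Propositions~\ref{prop:monolith} and~\ref{prop:epi} to identify the quotient precisely, whereas you observe directly that $(G\pv H)/[G,H]$ is a quotient of $G\times H$ in all nontrivial cases, which is enough for amenability and makes the case split unnecessary.
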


\begin{proof}
If $G$ and $H$ are both infinite, then Proposition~\ref{prop:monolith} shows that $G\pv H$ is an extension of two amenable groups: $\Altf(\ul G \cup \ul H)$, which is amenable because it is locally finite, and $G\times H$, which is amenable because both $G$ and $H$ are so.

If $G$ and $H$ are both finite, then so is $G\pv H$ and hence the latter is amenable.

Finally, if exactly one of $G$ or $H$ is infinite, let us assume it is $G$. Then $[G,H]H$ is locally finite by Proposition~\ref{prop:containsaltf} and $G\pv H$ is an extension of $[G,H]H$ by $G$ by Proposition~\ref{prop:epi}.
\end{proof}

We now establish the more surprising statement of Theorem~\ref{thm:co-amen}.

\begin{thm}
Given any two groups $G$ and $H$, the following are equivalent.

\begin{enumerate}[(i)]
\item $G$ is a co-amenable subgroup of $G\pv H$.\label{pt:coamen:coamen}
\item Either $H$ is trivial or both $G$ and $H$ are amenable (hence also $G\pv H$).\label{pt:coamen:both}
\end{enumerate}
\end{thm}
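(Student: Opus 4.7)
The direction (ii) $\Rightarrow$ (i) is immediate: if $H$ is trivial then $\Pi := G\pv H$ coincides with $G$, while if $\Pi$ is amenable then every subgroup is co-amenable. For the substantial direction, I assume $G$ is co-amenable in $\Pi$ with $H \neq \{e\}$ and seek to prove $\Pi$ is amenable; by Lemma~\ref{lem:amen} it suffices to show that both $G$ and $H$ are amenable. The case $G$ finite is routine (a finite subgroup is amenable and an amenable co-amenable subgroup forces $\Pi$ amenable), so I concentrate on $G$ infinite. My basic tool is to push forward a given $\Pi$-invariant mean $\mu$ on $\Pi/G$ to more tractable $\Pi$-sets. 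If $y\in X := \ul G\cup\ul H$ is fixed by $G$---which holds for any $y \in \ul H\smallsetminus\{\ul e\}$---then $G \se \Stab_\Pi(y)$, and the associated $\Pi$-equivariant surjection $\Pi/G \twoheadrightarrow \Pi\cdot y = X$ (the target being all of $X$ by Corollary~\ref{cor:high}) transports $\mu$ to a $\Pi$-invariant mean on $X$. The resulting mean is atomless, since $\Altf(X) \se \Pi$ acts transitively on the infinite $X$, so its $\ul G$- and $\ul H$-masses add to $1$. When $H$ is finite, the finite set $\ul H$ has mass $0$, forcing the $\ul G$-mass to be $1$; restricting to the regular $G$-set $\ul G$ then yields a $G$-invariant mean on $G$, so $G$ is amenable.

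The principal case is when both $G$ and $H$ are infinite. Amenability of $H$ follows by pushing $\mu$ along the canonical epimorphism $\Pi\twoheadrightarrow G\times H$ from Corollary~\ref{cor:epi}: the image $G\times\{e\}$ is a \emph{normal} subgroup, so the pushforward is a $(G\times H)$-invariant mean on $H$. For amenability of $G$ I refine the pushforward to the set $X^{(2)}$ of ordered pairs of distinct elements: since the joint stabiliser in $\Pi$ of any pair $(\ul{h_1},\ul{h_2})\in \ul H^{(2)}$ contains $G$, we obtain a $\Pi$-invariant mean $m_2$ on $X^{(2)}$. The four proportions $p(\alpha,\beta) := m_2(\alpha\times\beta)$ for $\alpha,\beta \in \{\ul G,\ul H\}$ sum to $1$, single-coordinate contributions vanishing by the same atomlessness argument. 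If $p(\ul G,\ul G) + p(\ul G,\ul H) > 0$, then the first-coordinate marginal of $m_2$ assigns positive mass to $\ul G$ and renormalises to a $G$-invariant mean on $\ul G\cong G$; symmetrically, $p(\ul G,\ul G) + p(\ul H,\ul G) > 0$ gives the same conclusion via the second coordinate. So the only situation not yet handled is $p(\ul G,\ul G) = p(\ul G,\ul H) = p(\ul H,\ul G) = 0$, i.e.\ $p(\ul H,\ul H) = 1$.

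The crux is to rule out this last possibility, and this is where the main difficulty lies. My plan is to unpack $p(\ul H,\ul H) = 1$ as the statement that $\mu$-almost every coset $\pi G$ has a representative sending the prescribed pair into $\ul H$; letting $(\ul{h_1},\ul{h_2})$ vary over $\ul H^{(2)}$ gives the analogous full-mass statement for every finite subset of $\ul H\smallsetminus\{\ul e\}$. Combined with the elementary structural fact---proved by induction on the generation of $\Pi$ by $G\cup H$---that every $\pi\in \Pi$ satisfies $\pi\ul H \triangle \ul H$ finite, this forces $\mu$ to be supported by cosets of representatives drawn from the proper subgroup $P \se \Pi$ setwise preserving the partition $(\ul G,\ul H\smallsetminus\{\ul e\})$. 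Since $P$ contains $G$ (acting regularly on $\ul G$ and trivially on $\ul H\smallsetminus\{\ul e\}$) but meets $H$ trivially, co-amenability of $G$ in $\Pi$ restricts to co-amenability of $G$ in $P$; the structure of $P$ as a semidirect product built from $G$ and $\Altf$-factors on each side of the partition then reduces the situation to the warm-up ``$H$ finite'' argument above, producing the missing $G$-invariant mean on $G$. With both $G$ and $H$ amenable, Lemma~\ref{lem:amen} delivers amenability of $\Pi$, completing the proof.
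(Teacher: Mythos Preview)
Your treatment of the easy direction and of the amenability of $H$ (pushing forward along $\Pi \twoheadrightarrow G \times H$ and using normality of $G \times \{e\}$) matches the paper's argument. The divergence is in how you establish the amenability of $G$: you attempt a direct argument via pushforwards of $\mu$ to $X$ and $X^{(2)}$, whereas the paper pushes forward along the support map $S\colon AH \to \paf(\ul G)$, $S(\sigma) = \ul G \cap \Supp(\sigma)$, and shows the resulting $G$-invariant mean on $\paf(\ul G)$ witnesses \emph{extensive amenability} of the regular $G$-action (whence amenability of $G$).

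Your alternative route has a genuine gap in the final paragraph. Granting that for every $y \in \ul H \smallsetminus \{\ul e\}$ the set $\{\pi G : \pi y \in \ul H\}$ has full $\mu$-mass (which is indeed what $p(\ul H,\ul H)=1$ for all pairs gives you), you want to deduce that $P/G = \{\pi G : \pi(\ul H \smallsetminus \{\ul e\}) \se \ul H \smallsetminus \{\ul e\}\}$ has full mass. But this is an intersection over the \emph{infinite} set $\ul H \smallsetminus \{\ul e\}$, and $\mu$ is only finitely additive. The structural fact that each $\pi$ has $\pi\ul H \triangle \ul H$ finite does not rescue this: it says each $\pi G$ lies in all but finitely many of the full-mass sets, yet the finite exceptional set depends on $\pi$ with no uniform bound, and the function $\pi G \mapsto |\{y \in \ul H \smallsetminus\{\ul e\}: \pi y \notin \ul H\}|$ is neither bounded nor $\Pi$-invariant. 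Without $\mu(P/G)=1$ you cannot restrict $\mu$ to obtain a $P$-invariant mean on $P/G$, and the reduction to the warm-up case collapses.

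The paper's support-map argument is designed precisely to avoid this infinite-intersection trap: to show that $\{E \in \paf(\ul G): \ul g \in E\}$ has full mass one only needs that the stabiliser of $\ul g$ in $AH$ has infinite index---a single infinite-index fact per element $g$, handled by invariance of $\mu$ under the regular $AH$-action, rather than an infinite conjunction.
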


\begin{proof}
\eqref{pt:coamen:coamen}$\Longrightarrow$\eqref{pt:coamen:both}. Suppose that $G$, viewed as a subgroup of $G\pv H$, is co-amenable. We can suppose that $H$ is non-trivial. We can also suppose that $G$ is infinite, since otherwise $G$ is amenable and hence so is $G\pv H$ by co-amenability. Write $A=\Altf(\ul G \cup \ul H)$, so that $A=[G,H]\lhd G\pv H$ by Proposition~\ref{prop:containsaltf}. Since we reduced to the case $G$ infinite, $AG=A\rtimes G$ is semidirect, while $AH$ might not be in case $H$ is finite. Likewise, $G\pv H \cong (AH)\rtimes G$ is semidirect.

We begin with the easy part: the amenability of $H$. The subgroup $A G$ is a fortiori co-amenable in $G\pv H$. Being normal, this means that the quotient group is amenable. If $H$ is infinite, this quotient is $H$ by Proposition~\ref{prop:epi}. If $H$ is finite, it is amenable anyway.

We now establish the amenability of $G$. Consider the map
$$S\colon A H \lra \paf(\ul G),\kern5mm S(\sigma) = \ul G \cap \Supp(\sigma).$$
where $\paf$ denotes the set of finite subsets and $\Supp$ the support of a permutation. For instance, $S(e)=\varnothing$ and $S(h)=\{\ul e\}$ if $h\in H$ is non-trivial. We endow $A H$ with the $G\pv H$-action resulting from viewing $A H$ as the coset space $(G\pv H)/G$. That is, $G$ acts by conjugation and $A H$ by the regular left multiplication. On the other hand, we only endow $\paf(\ul G)$ with its natural $G$-action. Then the map $S$ is $G$-equivariant and thus induces a $G$-equivariant map
$$S_* \colon \means(A H) \lra \means(\paf(\ul G))$$
on the space of means (finitely additive probability measures). Note that $G$ fixes a point in the left hand side, since it even fixes a point in the underlying set $A H$ (namely the identity). By co-amenability, it follows that there is a mean $\mu$ on $A H$ fixed by $G\pv H$. In particular, $S_*(\mu)$ is a $G$-invariant mean on $\paf(\ul G)$.

We claim that $S_*(\mu)\left(\left\{E\in \paf(\ul G): \ul g \in E\right\}\right)=1$ holds for every $g\in G$. Indeed, this number is by definition $\mu\left(\left\{\sigma\in A H : \sigma \ul g \neq \ul g\right\}\right)$. Therefore, the claim amounts to showing that $\mu$ assigns mass zero to the set $\left\{\sigma\in A H : \sigma \ul g = \ul g\right\}$. This set is the stabiliser of $\ul g$ in $A H$ for its action on $\ul G \cup \ul H$. Since this is a transitive action on an infinite set, this stabiliser has infinite index. Now the invariance of $\mu$ under $A H$ implies that the mass is indeed zero because each coset is disjoint with equal mass. This justifies the claim.

Our claim establishes that the $G$-action on $\ul G$ is extensively amenable, cf.\ Definition~1.1 in~\cite{JMMBS18}. In particular, this action is amenable by Lemma~2.1 in~\cite{JMMBS18}. This shows that $G$ is an amenable group.

\medskip
As to the implication \eqref{pt:coamen:both}$\Longrightarrow$\eqref{pt:coamen:coamen}, it is immediate since $G\pv H$ is itself amenable when both $G$ and $H$ are (Lemma~\ref{lem:amen}), and $G\pv 1 = G$.
\end{proof}

The next statement contains Proposition~\ref{prop:amen}.

\begin{prop}\label{prop:amen:high}
Given two infinite groups $G$ and $H$, the following are equivalent.

\begin{enumerate}[(i)]
\item At least one of $G$ or $H$ is amenable.\label{pt:amen:one}
\item $G\pv H$ admits a faithful highly transitive amenable action.\label{pt:amen:amen}
\item $G\pv H$ admits a faithful doubly transitive amenable action.\label{pt:2:amen}
\end{enumerate}
\end{prop}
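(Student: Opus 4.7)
The implication \eqref{pt:amen:amen}$\Longrightarrow$\eqref{pt:2:amen} is immediate, so only \eqref{pt:amen:one}$\Longrightarrow$\eqref{pt:amen:amen} and \eqref{pt:2:amen}$\Longrightarrow$\eqref{pt:amen:one} require work. In both directions the protagonist will be the tautological action of $G \pv H$ on $X = \ul G \cup \ul H$: it is faithful and highly transitive by Corollary~\ref{cor:high}, and in fact Corollary~\ref{cor:high} also shows that \emph{every} faithful doubly transitive $G\pv H$-set is isomorphic to this one. Accordingly, both implications reduce to producing or extracting a $G\pv H$-invariant mean on $X$.

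For \eqref{pt:amen:one}$\Longrightarrow$\eqref{pt:amen:amen} my plan is direct. Suppose $G$ is amenable (the case $H$ amenable is symmetric). Transport a left-invariant mean on $G$ to a mean $\mu$ on $\ul G$ via $g\mapsto \ul g$, and extend it by zero on $\ul H \smallsetminus \{\ul e\}$. Since $G$ is infinite this mean gives mass zero to every singleton, in particular to $\{\ul e\}$. The action of $G$ on $X$ is regular on $\ul G$ and trivial on $\ul H \smallsetminus \{\ul e\}$, so $G$-invariance of $\mu$ is built in. For $H$-invariance I use the symmetric picture: $H$ acts trivially on $\ul G \smallsetminus \{\ul e\}$, which carries all of the mass, and $\mu(\ul H) = 0$ since $\ul H \smallsetminus \{\ul e\}$ and $\{\ul e\}$ are both $\mu$-null. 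Invariance under the generating set $G\cup H$ entails invariance under $G\pv H$, so the standard action is amenable.

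For \eqref{pt:2:amen}$\Longrightarrow$\eqref{pt:amen:one}, start from a faithful doubly transitive amenable action; by Corollary~\ref{cor:high} it may be identified $G\pv H$-equivariantly with $X$, and amenability produces an invariant mean $\mu$ on $X$. The key observation is that $G$ has exactly two types of orbits in $X$: the regular orbit $\ul G$, and fixed points filling $\ul H \smallsetminus \{\ul e\}$. Hence if $\mu(\ul G) > 0$, renormalising the restriction of $\mu$ to $\ul G$ yields a $G$-invariant probability mean on the regular $G$-set $\ul G$, which forces $G$ to be amenable. The same argument applied to $H$ shows that $\mu(\ul H)>0$ implies $H$ amenable. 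Since $X = \ul G \cup \ul H$ and $\mu$ is a probability mean, one of the two quantities $\mu(\ul G)$, $\mu(\ul H)$ must be positive, establishing \eqref{pt:amen:one}.

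There is no genuine obstacle in this proof; the entire argument rests on Corollary~\ref{cor:high} (to pin down the underlying set) and on the elementary dichotomy of $G$-orbits in $X$. The only subtlety is the bookkeeping around the shared point $\ul e$, which is innocuous because it has $\mu$-mass zero whenever $G$ or $H$ is infinite.
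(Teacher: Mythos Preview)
Your proof is correct and follows essentially the same route as the paper. The only cosmetic difference is in \eqref{pt:amen:one}$\Longrightarrow$\eqref{pt:amen:amen}: the paper works with a left F{\o}lner sequence in $G$, translating each set so that it avoids $e$ (whence $H$ fixes it pointwise), whereas you pass directly to a left-invariant mean and use that singletons are null; both arguments encode the same idea that an invariant mean on $\ul G$ can be made to ignore the glued point $\ul e$, so that $H$ sees nothing.
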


\begin{proof}
\eqref{pt:amen:one}$\Longrightarrow$\eqref{pt:amen:amen}.
Suppose that $G$ is amenable and consider the action of $G\pv H$ on $\ul G \cup \ul H$. In view of Corollary~\ref{cor:high}, we only need to justify that it is amenable. Since $G$ is amenable, it admits a sequence $(A_n)$ of left F{\o}lner sets $A_n\se G$. Since $G$ is infinite, we can choose $g_n$ such that $A_n g_n$ does not contain $e$. Note that $(A_n g_n)$ is still a left F{\o}lner sequence in $G$; but now $(\ul{A_n g_n})$ is also a F{\o}lner sequence for the $G \pv H$-action on $\ul G \cup \ul H$ because $H$ fixes $\ul{A_n g_n}$.

\eqref{pt:amen:amen}$\Longrightarrow$\eqref{pt:2:amen} is trivial.

\eqref{pt:2:amen}$\Longrightarrow$\eqref{pt:amen:one}.
If $G\pv H$ admits a faithful doubly transitive amenable action, then we can assume by Corollary~\ref{cor:high} that it is the action on $\ul G \cup \ul H$. Let thus $\mu$ be an invariant mean on $\ul G \cup \ul H$. Then $\mu(\ul G)$ and  $\mu(\ul H)$ cannot both vanish since $\mu(\ul G \cup \ul H)=1$. By symmetry we can assume $\mu(\ul G)>0$. After renormalising, we obtain a mean $\mu'$ on $\ul G$ which is still invariant under every element of $G\pv H$ which preserves $\ul G$. In particular, it is invariant under $G$ and witnesses that $G$ is an amenable group.
\end{proof}

We observe that the argument given for \eqref{pt:amen:one}$\Longrightarrow$\eqref{pt:amen:amen} also establishes the following.

\begin{prop}
Let $G$ and $H$ be any groups. If one of $G$ or $H$ is infinite amenable, then $G\pv H$ admits a faithful transitive amenable action.

Moreover, we can take this action to be highly transitive unless the other group is trivial.\qed
\end{prop}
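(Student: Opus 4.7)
The plan is to reuse the F\o lner-set construction from the implication \eqref{pt:amen:one}$\Longrightarrow$\eqref{pt:amen:amen} in the proof of Proposition~\ref{prop:amen:high}, dropping the hypothesis that both factors are infinite. Without loss of generality, I will assume that $G$ is infinite amenable.

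First, I dispose of the trivial case: if $H=\{e\}$, then $G\pv H$ is canonically identified with $G$, and the left regular action of $G$ on itself is the desired faithful transitive amenable action. Here it is not highly transitive in general, which is consistent with the ``unless'' clause of the statement. In the remaining case where $H$ is non-trivial, the hypotheses of Corollary~\ref{cor:high} are satisfied (since $G$ is infinite and both factors are non-trivial), so the canonical action of $G\pv H$ on $\ul G \cup \ul H$ is faithful and highly transitive; what remains is to verify its amenability.

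For the amenability step, I choose a left F\o lner sequence $(A_n)$ in $G$ and use the infinity of $G$ to pick $g_n\in G$ with $e\notin A_n g_n$. Right-translation preserves the F\o lner property, so $(A_n g_n)$ is again a left F\o lner sequence in $G$, and I claim that $(\ul{A_n g_n})$ is then a F\o lner sequence for $G\pv H$ acting on $\ul G \cup \ul H$: each $g\in G$ translates these sets according to the F\o lner condition on $(A_n g_n)$, while each $h\in H$ fixes them pointwise because $\ul e$ is the only point of $\ul G$ moved by $H$ and, by construction, none of the elements of $\ul{A_n g_n}$ is $\ul e$. No step presents a serious obstacle; the only care needed is in separating the case $H=\{e\}$, where Corollary~\ref{cor:high} does not apply, from the generic case in which it supplies high transitivity automatically.
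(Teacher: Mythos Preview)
Your proposal is correct and follows exactly the approach the paper intends: the paper marks this proposition with a \qed\ and only remarks that the F{\o}lner argument from \eqref{pt:amen:one}$\Longrightarrow$\eqref{pt:amen:amen} already establishes it, which is precisely what you spell out. Your explicit treatment of the degenerate case $H=\{e\}$ (where Corollary~\ref{cor:high} does not apply) is a welcome clarification that the paper leaves implicit.
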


Strengthening amenability to hereditary or extensive amenability flips again the behaviour of $G\pv H$ with respect to $G$ and $H$:

\begin{prop}
Given two infinite groups $G$ and $H$, the following are equivalent.

\begin{enumerate}[(i)]
\item Both $G$ and $H$ are amenable.\label{pt:extamen:both}
\item $G\pv H$ admits a faithful highly transitive extensively amenable action.\label{pt:extamen:ext}
\item $G\pv H$ admits a faithful highly transitive hereditarily amenable action.\label{pt:extamen:her}
\end{enumerate}

\noindent
Moreover, in~\eqref{pt:extamen:ext} and ~\eqref{pt:extamen:her} we can replace high transitivity by double transitivity.
\end{prop}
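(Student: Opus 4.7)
The plan is to verify the three implications (i)$\Rightarrow$(ii), (ii)$\Rightarrow$(iii), and (iii)$\Rightarrow$(i). The ``moreover'' clause will then follow automatically: the argument for (i)$\Rightarrow$(ii) actually produces high transitivity, while (iii)$\Rightarrow$(i) will only use double transitivity.

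For (i)$\Rightarrow$(ii), I would first apply Lemma~\ref{lem:amen} to conclude that $G\pv H$ is itself amenable. I would then appeal to the general principle that every action of an amenable group is automatically extensively amenable. This follows from the characterisation of extensive amenability of an action $\Gamma \curvearrowright Y$ as amenability of the semidirect product $\paf(Y) \rtimes \Gamma$, where $\paf(Y)$ denotes the abelian group of finite subsets of $Y$ under symmetric difference. Since $\paf(Y)$ is locally finite (hence amenable) and $\Gamma = G\pv H$ is amenable, this semidirect product is amenable. Corollary~\ref{cor:high} then supplies the required faithfulness and high transitivity of the action on $\ul G \cup \ul H$.

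The implication (ii)$\Rightarrow$(iii) is formal, since extensive amenability was introduced in the text as a strengthening of hereditary amenability; any extensively amenable action is in particular hereditarily amenable.

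For (iii)$\Rightarrow$(i), suppose $G\pv H$ admits a faithful, doubly transitive, hereditarily amenable action on some set $X$. By Corollary~\ref{cor:high}, $X$ is $(G\pv H)$-equivariantly isomorphic to $\ul G \cup \ul H$. The orbit of $\ul e$ under the subgroup $G \le G\pv H$ is therefore $\ul G$, and $G$ acts on this orbit by its left regular representation. Hereditary amenability of the action forces this regular $G$-action to be amenable, which is equivalent to $G$ being an amenable group. A symmetric argument provides the amenability of $H$. The only step requiring any transitivity at all was the appeal to Corollary~\ref{cor:high}, which needs merely double transitivity, so the ``moreover'' statement is covered simultaneously. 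The main potential subtlety is justifying the ``amenable group $\Rightarrow$ all actions extensively amenable'' step cleanly, but this reduces to an amenable-by-amenable extension via the semidirect product characterisation, so no serious obstacle is anticipated.
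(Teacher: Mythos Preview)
Your proposal is correct and follows essentially the same route as the paper: same three implications, same use of Lemma~\ref{lem:amen} and Corollary~\ref{cor:high}, and the same reduction of (iii)$\Rightarrow$(i) to the regular $G$- and $H$-orbits in $\ul G\cup\ul H$. The only cosmetic difference is that where the paper cites~\cite{JMMBS18} for ``amenable $\Rightarrow$ extensively amenable'' and ``extensively amenable $\Rightarrow$ hereditarily amenable'', you unpack the first via the semidirect-product characterisation and treat the second as definitional --- both are fine.
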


\begin{proof}
\eqref{pt:extamen:both}$\Longrightarrow$\eqref{pt:extamen:ext}. In view of Corollary~\ref{cor:high}, it suffices to prove that the action of $G\pv H$ on $\ul G \cup \ul H$ is extensively amenable. By Lemma~\ref{lem:amen}, the group $G\pv H$ is amenable. It remains to recall that every action of an amenable group is extensively amenable, see Lemma~2.1 in~\cite{JMMBS18}.

\eqref{pt:extamen:ext}$\Longrightarrow$\eqref{pt:extamen:her} holds by  Corollary~2.3 in~\cite{JMMBS18}.

\eqref{pt:extamen:her}$\Longrightarrow$\eqref{pt:extamen:both}. By Corollary~\ref{cor:high}, the action can be taken to be the canonical $G\pv H$-action on $\ul G \cup \ul H$. By definition of hereditary amenability, the $G$-action on every $G$-orbit in $\ul G \cup \ul H$ remains amenable. Applying this to the $G$-orbit $\ul G$, we deduce that $G$ is an amenable group. We argue likewise for $H$.

\medskip

Finally, the modifications needed for double transitivity in place of high transitivity are taken care of by Corollary~\ref{cor:high} as in the proof of Proposition~\ref{prop:amen:high}.
\end{proof}

\section{Property (FA) and the cubical complex}

\begin{thm}\label{thm:FA}
Let $G$ and $H$ be any groups. Then $G\pv H$ has Serre's property~(FA) if and only if both $G$ and $H$ do.
\end{thm}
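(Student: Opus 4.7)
The necessity is standard: property~(FA) is inherited by quotients, since any action of a quotient on a tree lifts along the surjection and inherits the fixed point. Whenever $G$ is infinite, $G$ is a quotient of $G\pv H$ by Proposition~\ref{prop:epi}, so (FA) descends; when $G$ is finite it trivially has (FA). The same applies to $H$ by symmetry.

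For sufficiency, assume both $G$ and $H$ have (FA). If both are finite, $G\pv H$ is finite by Theorem~\ref{thm:finite} and the conclusion is immediate; the mixed case is handled analogously using the projection $\pi_H$ of Proposition~\ref{prop:epi}. I therefore focus on the main case where both $G$ and $H$ are infinite. Let $G\pv H$ act on a tree $T$; the aim is to produce a global fixed vertex. By Proposition~\ref{prop:monolith}, the kernel of this action is either trivial or contains the simple normal subgroup $\Altf(\ul G \cup \ul H) = [G, H]$. In the first subcase, the action factors through $(G\pv H)/\Altf \cong G\times H$ via Corollary~\ref{cor:epi}; since a direct product of two (FA)-groups is itself (FA)---the second factor preserves the fixed subtree of the first by commutativity, and finds a fixed vertex in it via its own (FA)---$G\times H$ has a global fixed vertex, which is therefore fixed by $G\pv H$.

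In the remaining subcase the action on $T$ is faithful, so that $G\pv H$, and in particular $\Altf(\ul G \cup \ul H)$, embeds into $\Aut(T)$. Each of $G$ and $H$ fixes a non-empty subtree of $T$ by (FA); if these subtrees intersect, any common vertex is fixed by $\langle G, H\rangle = G\pv H$ and we are done. Assume for contradiction that $\Fix(G)\cap\Fix(H)=\emptyset$. A standard bridge argument on the geodesic between $\Fix(G)$ and $\Fix(H)$ then produces $g\in G$ and $h\in H$ whose individual fixed subtrees in $T$ are themselves disjoint. By Proposition~\ref{prop:tricycle}, $[g,h]$ has order exactly $3$ in $G\pv H$, hence order $3$ in $\Aut(T)$ under the faithful embedding. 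A careful tree-dynamical analysis---combining the rigidity of the many torsion relations among the $3$-cycles generating $\Altf(\ul G\cup\ul H)$ with the behaviour of commutators of two elliptic elements with disjoint fixed subtrees---then yields the desired contradiction. The principal obstacle is precisely this final step: translating the $3$-torsion relations of Proposition~\ref{prop:tricycle} into a geometric constraint that forces $\Fix(G)\cap\Fix(H)\neq\emptyset$ in every faithful action, even though an isolated order-$3$ element of $\Aut(T)$ can perfectly well fix a vertex while its would-be factors do not.
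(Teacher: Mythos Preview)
Your argument leaves a genuine gap in the faithful case: you explicitly flag the ``careful tree-dynamical analysis'' as the principal unresolved obstacle, and nothing before that point forces the conclusion. The step can in fact be completed by a short lemma you never state: if $g$ and $h$ are elliptic automorphisms of a tree with $\Fix(g)\cap\Fix(h)=\varnothing$, then $[g,h]$ is hyperbolic. Indeed, let $p\in\Fix(g)$ be the point nearest $\Fix(h)$; then $hp\neq p$, and a gate argument shows that $h$ moves all of $\Fix(g)$ off itself, so $\Fix(g)\cap h\Fix(g)=\varnothing$. Writing $[g,h]=g\cdot(hg^{-1}h^{-1})$ exhibits it as a product of two elliptics with disjoint fixed sets, whence it is hyperbolic by the standard Serre lemma --- contradicting the order~$3$ coming from Proposition~\ref{prop:tricycle}. (Incidentally, your ``first subcase'' as written is the trivial-kernel case, yet the sentence describes the other one.)

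The paper proceeds differently and avoids commutators altogether. Rather than splitting on the kernel of the action, it splits on whether $A=\Altf(\ul G\cup\ul H)$ has a fixed vertex. If $T^A\neq\varnothing$, it is a $(G\pv H)$-invariant subtree on which the action factors through $G\times H$, and one concludes as you do. If $T^A=\varnothing$, the key input is that $A$ is \emph{locally finite}: every finitely generated subgroup fixes a vertex, so a compactness argument produces a fixed end $\xi\in\partial T$, necessarily unique (two fixed ends would force a fixed vertex on the connecting geodesic). Normality of $A$ then makes $\xi$ a global fixed end for $G\pv H$; the rays from $\Fix(G)$ and from $\Fix(H)$ towards $\xi$ must eventually coincide, and any vertex in their intersection is fixed by both $G$ and $H$, hence by $G\pv H$. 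The mixed case is handled identically using the locally finite kernel $[G,H]H$ of $\pi_G$. This argument is more structural than yours: it never needs to locate individual elements $g,h$ with disjoint fixed sets, and it makes transparent why the infinite monolith, despite obstructing residual finiteness, is harmless for property~(FA).
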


\begin{proof}[Proof of Theorem~\ref{thm:FA}]
We start with the main case where both $G$ and $H$ are infinite.

Suppose that $G$ and $H$ have property~(FA) and consider an action of $G\pv H$ by automorphisms on a tree $T$. We write $A=\Altf(\ul G \cup \ul H)$. Upon taking the barycentric subdivision, we can assume that  $G\pv H$ acts without inversions; in particular, if the set $T^A$ of $A$-fixed points is non-empty, then it forms a subtree. In that case the quotient $G\times H$ acts on $T^A$ and hence we find a fixed point of $G\pv H$ since  property~(FA) is closed under finite products by~\cite[\S3.3]{Serre74}. 

We can therefore assume that $A$ has no fixed point in $T$. Since every finitely generated subgroup of $A$ is finite, a compactness argument shows that $A$ fixes a point at infinity $\xi\in \partial T$, see Ex.~2 in~\cite[I\S6.5]{Serre77}. We claim that $\xi$ is the unique such point. Indeed, if $\xi'\neq \xi$ is also fixed, then $A$ preserves the entire geodesic line in $T$ with endpoints $\xi, \xi'$. Being locally finite, $A$ must then fix a vertex on this geodesic, contradicting $T^A\neq \varnothing$, whence the claim.

Since $A$ is normal in $G\pv H$, the uniqueness claim implies that $G\pv H$ fixes $\xi$. Let now $x$ be a vertex fixed by $G$ and $y$ a vertex fixed by $H$. Then $G$ fixes the entire geodesic ray from $x$ to $\xi$ and 
$H$ the geodesic ray from $y$ to $\xi$. Having the same point at infinity, these rays meet. Any point in their intersection is therefore fixed by both $G$ and $H$ and hence by $G\pv H$.

The converse is clear since $G$ and $H$ are quotients of $G\pv H$ by Corollary~\ref{cor:epi}.

\medskip
We consider now the case where exactly one group, say $G$, is infinite. This time, we write $A$ for the kernel of the canonical epimorphism $G\pv H\to G$ given by Proposition~\ref{prop:epi} and we can argue exactly as above. (For the converse direction, a priori only $G$ is a quotient of $G\pv H$, but $H$ has property~(FA) anyways since it is finite.)

Finally, the case where both $G$ and $H$ are finite is trivial since $G\pv H$ is finite as well.
\end{proof}

The construction of the \cat0 cubical complex  consists in applying a classical argument to the action on $\ul G\cup \ul H$, as follows.

\begin{thm}\label{thm:ccc}
Let $G$ and $H$ be infinite groups. Then $G\pv H$ acts by automorphisms on a \cat0 cubical complex $V$ without bounded orbits.

Moreover, each of $G$ and $H$ have fixed vertices in $V$, and there is a unique choice of such vertices that are adjacent.
\end{thm}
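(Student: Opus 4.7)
The plan is to apply the classical construction of a \cat0 cube complex associated with an action on a set. Set $X = \ul G \cup \ul H$ and declare two subsets of $X$ equivalent when their symmetric difference is finite. The graph whose vertices are the subsets in a fixed equivalence class, with edges joining subsets differing by a single element, is a median graph; it is the $1$-skeleton of a canonical \cat0 cube complex, higher cubes being filled in wherever a combinatorial subcube is present. I take $V$ to be the component containing the subset $\ul G$, endowed with this cubical structure.

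The action of $G\pv H$ on $X$ extends to $V$ by cubical automorphisms, since any permutation of $X$ preserves symmetric differences. To see that the equivalence class of $\ul G$ is preserved, it suffices to check the generators: for $g \in G$ one has $g\ul G = \ul G$, and for $h \in H$ a direct computation gives $h\ul G = (\ul G \smallsetminus \{\ul e\}) \cup \{\ul h\}$, differing from $\ul G$ by the two-element set $\{\ul e, \ul h\}$.

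Next I identify the fixed vertices. A vertex $A \in V$ is $G$-fixed iff $A$ is a union of $G$-orbits; the $G$-orbits on $X$ are $\ul G$ and the singletons $\{\ul h\}$ for $h \in H \smallsetminus \{e\}$. Since $\ul G$ is infinite, the condition $A \triangle \ul G$ finite rules out $A \cap \ul G = \varnothing$, so the $G$-fixed vertices of $V$ are precisely the sets $\ul G \cup S$ with $S \subseteq \ul H \smallsetminus \{\ul e\}$ finite. A symmetric analysis yields that the $H$-fixed vertices of $V$ are the sets $\ul G \smallsetminus F$ with $F \subseteq \ul G$ finite and $\ul e \in F$. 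For such a pair $A_G = \ul G \cup S$ and $A_H = \ul G \smallsetminus F$, the symmetric difference $A_G \triangle A_H$ is the disjoint union $F \sqcup S$ of cardinality $|F| + |S| \geq 1$, with equality to $1$ forcing $F = \{\ul e\}$ and $S = \varnothing$. This pins down the unique adjacent pair $(\ul G,\ \ul G \smallsetminus \{\ul e\})$, which does lie in $V$ and whose components are respectively $G$- and $H$-fixed.

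It remains to show that no orbit is bounded. The main subtlety is that no single element of $G$ or $H$ moves $\ul G$ very far, so I exploit the subgroup $\Altf(X)$ of $G \pv H$ given by Proposition~\ref{prop:containsaltf}. Pick pairwise disjoint triples $\{x_i, y_i, z_i\}$ for $i = 1, 2, \ldots$ with $x_i \in \ul G \smallsetminus \{\ul e\}$ and $y_i, z_i \in \ul H \smallsetminus \{\ul e\}$, and set $\sigma_n = (x_1; y_1; z_1)\cdots(x_n; y_n; z_n)$, an element of $\Altf(X)$. One computes $\sigma_n \ul G = (\ul G \smallsetminus \{x_1, \ldots, x_n\}) \cup \{y_1, \ldots, y_n\}$, so that $|\sigma_n \ul G \triangle \ul G| = 2n$; hence the orbit of $\ul G$ is unbounded. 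For an arbitrary vertex $A \in V$, permutations preserve cardinalities of symmetric differences, giving $|\gamma \ul G \triangle \gamma A| = |\ul G \triangle A|$, and iterating the triangle inequality for $\triangle$ yields $|\gamma A \triangle A| \geq |\gamma \ul G \triangle \ul G| - 2|\ul G \triangle A|$; as $|\ul G \triangle A|$ is finite, the orbit of $A$ is also unbounded.
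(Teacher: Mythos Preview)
Your proof is correct and follows essentially the same construction as the paper: the commensurated subset $\ul G$ yields the \cat0 cube complex of subsets at finite symmetric difference from $\ul G$, with the same identification of the unique adjacent fixed pair $(\ul G,\ \ul G\smallsetminus\{\ul e\})$. The only cosmetic differences are that the paper invokes high transitivity (Corollary~\ref{cor:high}) for unboundedness where you build explicit products of $3$-cycles in $\Altf(X)$, and you spell out the orbit decomposition for fixed vertices and the triangle-inequality transfer to arbitrary orbits a bit more explicitly than the paper does.
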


In particular, it follows that $G\pv H$ does not have Kazhdan's property~(T), as stated in Theorem~\ref{thm:T}. Indeed, it is well known that an unbounded action on a \cat0 cubical complex gives rise to an unbounded action on a Hilbert space, that is, negates property~(FH), and hence in turn precludes property~(T). This fact has a long history; the most complete reference we know for it is~\cite{Cornulier_FW_arx}.

\begin{proof}[Proof of Theorem~\ref{thm:ccc}]
Consider $\ul G \cup \ul H$ with its canonical $G\pv H$-action. The group $G$ preserves the subset $\ul G$, while the group $H$ preserves $\ul G \smallsetminus\{\ul e\}$. Since $G$ and $H$ generate $G\pv H$, it follows that any $\sigma \in G\pv H$ \textbf{commensurates} the subset $\ul G$, which means by definition that the symmetric difference $\sigma \ul G \triangle \ul G$ is finite. This is a classical setting for the construction of ``walled spaces'' and \cat0 cubical complexes, as initiated notably in~\cite{Sageev95} and~\cite{Haglund-Paulin}; a very general and complete treatment is given in~\cite{Cornulier_FW_arx}. We recall the explicit construction:

Let $V$ be the collection of all subsets $v\se \ul G \cup \ul H$ for which $v\triangle \ul G$ is finite. Consider the (simple, unoriented) graph with vertex set $V$ defined by declaring that $v$ is adjacent to $v'$ whenever $v\triangle v'$ contains exactly one element. The point made in the above references is that this graph is the one-skeleton of a \cat0 cubical complex and the natural $G\pv H$-action on $V$ extends to an action by automorphisms of this complex. By construction, $G$ fixes $\ul G$ when viewed as a vertex, and $H$ fixes the vertex $\ul G \smallsetminus\{\ul e\}$, which is adjacent to $\ul G$.

More generally, a vertex $v\in V$ is fixed by $G$ if and only if $\ul G \se v$; likewise, $v'\in V$ is fixed by $H$ if and only if $v' \se \ul G \smallsetminus \{\ul e\}$. This shows that the pair $(\ul G , \ul G \smallsetminus \{\ul e\})$ is the unique adjacent choice.

Finally, the  $G\pv H$-orbits are unbounded because the combinatorial distance $\left|\sigma \ul G \triangle \ul G\right|$ is unbounded, for instance by high transitivity (Corollary~\ref{cor:high}).
\end{proof}

The action on the above complex $V$ can be described further; the following shows in particular that the orbits of $G\pv H$ in $V$ coincide with the orbits of its normal subgroup $[G,H]$.

\begin{thm}
Consider $s\colon V \to \ZZ$ defined by $s(v) = \left|v\smallsetminus \ul G\right| - \left|\ul G \smallsetminus v\right|$.

\begin{enumerate}[(i)]
\item The map $s$ is a $G\pv H$-invariant surjection.\label{pt:ccc:inv}
\item The fibers $V_n=s\inv(\{n\})$, with $n\in \ZZ$, coincide with the orbits of $G\pv H$ as well as with the orbits of $[G,H]$ in $V$.\label{pt:ccc:orbits}
\item The orbit $V_n$ has a unique $G$-fixed point if $n=0$, infinitely many if $n>0$, and none if $n<0$.\label{pt:ccc:pos}
\item The orbit $V_n$ has a unique $H$-fixed point if $n=-1$, infinitely many if $n<-1$, and none if $n>-1$.\label{pt:ccc:neg}
\end{enumerate}
\end{thm}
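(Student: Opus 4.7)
The plan is to address the four parts in sequence, with~\eqref{pt:ccc:inv} being the only step that requires a genuine idea.

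For~\eqref{pt:ccc:inv}, I would introduce the signed difference $\delta(A,B) = |A \smallsetminus B| - |B \smallsetminus A|$, defined whenever $A \triangle B$ is finite, so that $s(v) = \delta(v, \ul G)$. The formula $\delta(A,B) = \sum_x (\chi_A(x) - \chi_B(x))$ (a finite sum) makes $\delta$ a cocycle: $\delta(A,C) = \delta(A,B) + \delta(B,C)$. Together with the obvious $\delta(\sigma A, \sigma B) = \delta(A,B)$ for any bijection $\sigma$, this yields
\[
s(\sigma v) - s(v) \;=\; \delta(\sigma v, \ul G) - \delta(v, \ul G) \;=\; \delta(\sigma \ul G, \ul G),
\]
which is a homomorphism $G \pv H \to \ZZ$ in $\sigma$. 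It vanishes on the generators: on $G$ trivially since $g \ul G = \ul G$, and on $h \in H$ because $h \ul G$ is obtained from $\ul G$ by exchanging $\ul e$ and $\ul h$, giving $\delta(h \ul G, \ul G) = 1 - 1 = 0$. For surjectivity of $s$, explicit preimages of $n$ are $\ul G \cup F$ with $F \se \ul H \smallsetminus \{\ul e\}$ of size $n$ (when $n \geq 0$) and $\ul G \smallsetminus F$ with $F \se \ul G$ of size $-n$ (when $n \leq 0$); both exist since $G$ and $H$ are infinite.

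For~\eqref{pt:ccc:orbits}, by~\eqref{pt:ccc:inv} it suffices to show that $[G,H] = \Altf(\ul G \cup \ul H)$ already acts transitively on each $V_n$. Given $v, v' \in V_n$, the set $v \triangle v'$ is finite (contained in $(v \triangle \ul G) \cup (v' \triangle \ul G)$), and the condition $s(v) = s(v')$ rewrites as $|v \smallsetminus v'| = |v' \smallsetminus v|$. Thus any bijection $v \smallsetminus v' \to v' \smallsetminus v$ extended by the identity produces a finitely supported permutation $\pi_0$ with $\pi_0(v) = v'$. If $\pi_0$ happens to be odd, I would compose it with a transposition $\tau$ swapping two distinct points of $v'$ (which is infinite, being cofinite in $\ul G$); since $\tau$ fixes $v'$ setwise, $\tau \pi_0 \in \Altf(\ul G \cup \ul H)$ still sends $v$ to $v'$.

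For~\eqref{pt:ccc:pos} and~\eqref{pt:ccc:neg}, I would classify the fixed vertices directly. A vertex $v$ is $G$-fixed iff $v \cap \ul G$ is $G$-invariant; since $G$ acts transitively on $\ul G$, this forces $v \cap \ul G \in \{\emptyset, \ul G\}$, and the empty option contradicts $|v \triangle \ul G| < \infty$. Hence the $G$-fixed $v$ are exactly $\ul G \cup F$ with $F \se \ul H \smallsetminus \{\ul e\}$ finite, of $s$-value $|F| \geq 0$, which proves~\eqref{pt:ccc:pos}. Symmetrically, $v$ is $H$-fixed iff $v \cap \ul H \in \{\emptyset, \ul H\}$; the option $\ul H \se v$ is excluded since it would force $v \smallsetminus \ul G \supseteq \ul H \smallsetminus \{\ul e\}$ to be infinite. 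Thus $v \se \ul G \smallsetminus \{\ul e\}$ with $\ul G \smallsetminus v$ finite; writing $F' = \ul G \smallsetminus v$, one has $\ul e \in F'$ and $s(v) = -|F'| \leq -1$, which proves~\eqref{pt:ccc:neg}. The uniqueness, infiniteness, or nonexistence of such $v$ in each $V_n$ is then immediate from $G$ and $H$ being infinite.

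The only genuinely non-routine ingredient is the cocycle argument establishing invariance of $s$; but even that reduces to a one-line check on the two generating subgroups, so I do not expect a real obstacle.
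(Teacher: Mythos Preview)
Your proof is correct. The overall strategy matches the paper's, but your execution of~\eqref{pt:ccc:inv} and~\eqref{pt:ccc:orbits} is somewhat cleaner than the original. For~\eqref{pt:ccc:inv}, the paper introduces the auxiliary function $s'(v) = |v \smallsetminus (\ul G \smallsetminus \{\ul e\})| - |(\ul G \smallsetminus \{\ul e\}) \smallsetminus v|$, observes that $s'$ is $H$-invariant for the same reason $s$ is $G$-invariant, and then checks by cases that $s' = s + 1$; your cocycle identity $s(\sigma v) - s(v) = \delta(\sigma \ul G, \ul G)$ is a more conceptual route to the same conclusion and makes the homomorphism structure explicit. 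For~\eqref{pt:ccc:orbits}, the paper fixes a basepoint $v_n$ in each fiber and maps an arbitrary $v'$ to it via high transitivity of $\Altf$ on a suitable subset, treating $n \geq 0$ and $n < 0$ separately; your argument directly compares any two $v, v' \in V_n$, reads off $|v \smallsetminus v'| = |v' \smallsetminus v|$ from the cocycle relation, and fixes the parity with a transposition inside the infinite set $v'$ --- this avoids the case split and the appeal to high transitivity. Parts~\eqref{pt:ccc:pos} and~\eqref{pt:ccc:neg} are handled identically in both.
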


\begin{proof}
\eqref{pt:ccc:inv} The map $s$ is onto by the definition of $V$. Note that $s$ is $G$-invariant because the $G$-action preserves both $\left|v\smallsetminus \ul G\right|$ and $\left|\ul G \smallsetminus v\right|$. Consider the following variant $s'$ of $s$:
$$s'(v) = \left|v\smallsetminus (\ul G \smallsetminus \ul \{e\}) \right| - \left|(\ul G \smallsetminus \ul \{e\})  \smallsetminus v\right|.$$
For the same reason as above, $s'$ is $H$-invariant. Therefore, the $G\pv H$-invariance of $s$ will follow if we prove $s'=s+1$. This, however, follows readily by distinguishing the cases where $\ul e$ belong to $v$ or does not. In the former case, $\left|v\smallsetminus (\ul G \smallsetminus \ul \{e\}) \right| = \left|v\smallsetminus \ul G\right| +1$ and $\left|(\ul G \smallsetminus \ul \{e\})  \smallsetminus v\right| = \left|\ul G  \smallsetminus v\right|$. In the latter, $\left|v\smallsetminus (\ul G \smallsetminus \ul \{e\}) \right| = \left|v\smallsetminus \ul G\right|$ and $\left|(\ul G \smallsetminus \ul \{e\})  \smallsetminus v\right| = \left|\ul G  \smallsetminus v\right|-1$.

\medskip
\noindent
\eqref{pt:ccc:orbits} Because of~\eqref{pt:ccc:inv}, it suffices to prove that $[G, H]$ acts transitively on $s\inv(\{n\})$ for any $n\in \ZZ$. We shall use the fact that $[G, H]$ coincides with $\Altf(\ul G \cup \ul H)$ by Proposition~\ref{prop:containsaltf}.

Consider first $n\geq 0$. Choose some $n$-element set $A_n\se \ul H \smallsetminus\{\ul e\}$ and let $v_n =\ul G \cup A_n$, noting $s(v_n) = n$. Consider now any element $v'\in V_n$; we seek $\sigma\in \Altf(\ul G \cup \ul H)$ with $\sigma v' = v_n$. Define the finite sets $B=v'\smallsetminus \ul G$ and $C=\ul G \smallsetminus v'$. Thus $|B| - |C| = n$ and we can choose a subset $B'\se B$ with $|B'|=|C|$, whence also $|B\smallsetminus B'|=n$. The high transitivity of $\Altf(\ul C \cup \ul H)$ on $\ul C \cup \ul H$ implies that there is $\sigma$ in $\Altf(\ul C \cup \ul H)$ satisfying $\sigma(B') = C$ and $\sigma(B\smallsetminus B')=A_n$. We now consider $\sigma$ as an element of $\Altf(\ul G \cup \ul H)$ fixing $\ul G \smallsetminus \ul C$; then indeed $\sigma v' = v_n$ as required.

The case $n<0$ is similar. We choose a $|n|$-element set $A_n\se \ul G$ and let $v_n =\ul G \smallsetminus A_n\in V_n$. Given any $v'\in V_n$, define again $B=v'\smallsetminus \ul G$ and $C=\ul G \smallsetminus v'$. This time there is $C'\se C$ with $|C'|=|B|$ and $|C\smallsetminus C'|=n$ and we find $\sigma$ with $\sigma(C')=B$ and $\sigma(C\smallsetminus C')=A_n$.

\medskip
\noindent
Now that we know that the $G\pv H$-orbits are exactly the sets $V_n$, the points~\eqref{pt:ccc:pos} and~\eqref{pt:ccc:neg} follow from the observation made in the proof of Theorem~\ref{thm:ccc} that $v\in V$ is fixed by $G$ if and only if $\ul G \se v$, and fixed by $H$ if and only if $v \se \ul G \smallsetminus \{\ul e\}$. 
\end{proof}

\begin{rem}
One important difference between the complex $V$ for $G\pv H$ and the Bass--Serre tree of $G*H$ is that vertex stabilisers in $G\pv H$ are much larger than conjugates of $G$ or $H$. For instance, the stabiliser of the vertex $\ul G$ maps onto $G\times H$ under the canonical projection.

Indeed, on the one hand this stabiliser contains $G$. On the other hand, given any non-trivial $h\in H$ we construct a lift $\tilde h\in G\pv H$ as follows. Choose any $h'\neq h, e$ in $H$ and consider the cycle $\sigma = \tricycle{h}{e}{h'}$, which is in $G\pv H$ by Proposition~\ref{prop:containsaltf}. Then the element $\tilde h=\sigma h $ of $G\pv H$ fixes the vertex $\ul G$ (it even fixes every element of $\ul G$).
\end{rem}

\section{Naturality}\label{sec:nat}
The construction of $G\pv H$ cannot be functorial in the usual sense with respect to the factors $G, H$. Specifically, given group homomorphisms $\alpha\colon G\to G'$ and $\beta\colon H\to H'$, we cannot expect to obtain a homomorphism  $G\pv H \to G' \pv H'$ compatible with $\alpha$ and $\beta$ under the canonical inclusions. Indeed, an obstruction arises from the \emph{kernel} of $\alpha$ or $\beta$. This is obvious in the case of finite groups since $G\pv H$ is then typically simple by Theorem~\ref{thm:finite}. Similar examples with infinite groups can readily be given using the fact the $G\pv H$ is monolithic, and a combinatorial obstruction is proposed in Example~\ref{ex:obs} below.

On the other hand, as soon as we exclude kernels, the construction of $G\pv H$ retains as much functoriality as possible. In other words, given subgroups $K<G$ and $L<H$, there is a clear and natural relation between $K\pv L$ and $G\pv H$. This is particularly transparent in the case of infinite groups, where we shall show the following.

\begin{prop}\label{prop:nat-inf}
Consider two groups $G$, $H$ and subgroups $K<G$, $L<H$.

If $K$ and $L$ are infinite, then the canonical embeddings of $K$ and $L$ into $G\pv H$ extend to an embedding $K\pv L \to G\pv H$.
\end{prop}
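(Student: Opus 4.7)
The plan is to factor the natural map through the free product of $K$ and $L$. By the universal property of $K*L$, the inclusions $K<G<G\pv H$ and $L<H<G\pv H$ combine into a homomorphism $\phi\colon K*L\to G\pv H$, and we also have the tautological surjection $\psi\colon K*L\twoheadrightarrow K\pv L$. The strategy is to show that $\ker\psi\subseteq\ker\phi$, which will furnish a homomorphism $\overline\phi\colon K\pv L\to G\pv H$ extending the inclusions of $K$ and $L$, and then to verify that $\overline\phi$ is injective.

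My first step will be to decompose the $\phi$-action on $\ul G\cup\ul H$ into $K*L$-invariant pieces. Choosing transversals $R_K\ni e$ for $K$ in $G$ and $R_L\ni e$ for $L$ in $H$, the set splits as $\ul K\cup\ul L$ together with the extra pieces $\ul{Kr}$ (for $r\in R_K\smallsetminus\{e\}$) and $\ul{Ls}$ (for $s\in R_L\smallsetminus\{e\}$). On $\ul K\cup\ul L$, the $\phi$-action coincides with the defining $\psi$-action of $K\pv L$. On each $\ul{Kr}$, the subgroup $L<H$ fixes every point (since $\ul e\notin\ul{Kr}$), so $\phi(w)$ acts there by left multiplication by $\pi_K(w)\in K$, where $\pi_K\colon K*L\to K$ is the retraction killing $L$; symmetrically, $\phi(w)$ acts on $\ul{Ls}$ by left multiplication by $\pi_L(w)\in L$.

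Once this decomposition is in hand, an element $w\in\ker\psi$ acts trivially on $\ul K\cup\ul L$, and it only remains to show $\pi_K(w)=\pi_L(w)=e$. This is where I expect the hypothesis that $K$ and $L$ are both infinite to be indispensable: composing $\psi$ with the canonical projection $K\pv L\twoheadrightarrow K\times L$ of Corollary~\ref{cor:epi} sends $w$ to $(\pi_K(w),\pi_L(w))$, so both retractions vanish and $\phi(w)=\identity$. Injectivity of the resulting $\overline\phi$ will then be almost automatic: the subset $\ul K\cup\ul L$ is invariant under the images of $K$ and $L$ in $G\pv H$, hence under $\overline\phi(K\pv L)$, and the restriction of $\overline\phi(\sigma)$ there coincides with the tautological and faithful action of $\sigma$ on $\ul K\cup\ul L$. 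I anticipate the coset decomposition to be the main source of care — once it is set up, the infinity hypothesis delivers the rest in one stroke via Corollary~\ref{cor:epi}, and the remainder is bookkeeping.
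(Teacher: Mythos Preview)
Your argument is correct. The coset decomposition is set up properly, the action of $\langle K,L\rangle$ on each non-trivial coset $\ul{Kr}$ (resp.\ $\ul{Ls}$) does factor through the retraction $\pi_K$ (resp.\ $\pi_L$), and your appeal to Corollary~\ref{cor:epi} is exactly what forces $\pi_K(w)=\pi_L(w)=e$ for $w\in\ker\psi$. The injectivity step is also fine: restriction to $\ul K\cup\ul L$ gives a left inverse to $\overline\phi$, which is precisely the content of Lemma~\ref{lem:nat-epi}.

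The paper organises things differently. Rather than treating both subgroups at once, it proves an asymmetric version (Proposition~\ref{prop:nat-inf-bis}): if only $K<G$ is infinite, then $K\pv H\hookrightarrow G\pv H$. The symmetric statement then follows by applying this twice, passing through $K\pv H$. The underlying mechanism is the same as yours --- the action on $\ul G\smallsetminus\ul K$ factors through the projection $\pi_G$ of Proposition~\ref{prop:epi}, and infiniteness of $K$ forces this projection to vanish on the kernel --- but the paper's detour buys a strictly stronger intermediate result, since Proposition~\ref{prop:nat-inf-bis} allows $H$ to be finite. Your direct symmetric argument is cleaner for the statement at hand but does not isolate this asymmetric refinement.
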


For finite groups, there is still an additional complication and the above statement does not hold (Example~\ref{exam:nat} below). The naturality can then be expressed in the following weaker form:

\begin{lem}\label{lem:nat-epi}
Consider two groups $G$, $H$ and subgroups $K<G$, $L<H$. Let $\langle K, L \rangle < G\pv H$ be the subgroup generated by the canonical images of $K$ and $L$ in $G\pv H$.

Then there is an epimorphism $\langle K, L \rangle \twoheadrightarrow K\pv L$ compatible with the embeddings of $H$ and $K$.
\end{lem}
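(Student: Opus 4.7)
The plan is to exhibit the epimorphism concretely by restricting the defining action. Viewed inside $G\pv H$, the subgroup $\langle K, L\rangle$ acts on $\ul G \cup \ul H$, and the first step is to verify that the subset $\ul K \cup \ul L$ is invariant under this action. Since $K$ is a subgroup of $G$, left multiplication by any $k\in K$ sends $\ul K$ to itself (by closure of $K$ under multiplication) and its complement in $\ul G$ to itself (by cancellation). Moreover, by definition of the embedding $G\hookrightarrow G\pv H$, the group $K$ fixes every point of $\ul H \smallsetminus \{\ul e\}$, in particular every point of $\ul L \smallsetminus \{\ul e\}$. Hence $K$ stabilises $\ul K \cup \ul L$ setwise; by symmetry so does $L$, and consequently so does $\langle K, L\rangle$.

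The second step is to observe that the induced restriction homomorphism
$$\rho\colon \langle K, L\rangle \lra \Sym(\ul K \cup \ul L)$$
has image exactly $K\pv L$. Indeed, for $k\in K$, the permutation $\rho(k)$ acts on $\ul K$ by left multiplication and fixes $\ul L \smallsetminus \{\ul e\}$ pointwise: this is precisely the defining action of $k$ in the construction of $K\pv L$. The symmetric statement holds for $L$. Thus $\rho(K)$ and $\rho(L)$ are the canonical copies of $K$ and $L$ inside $K\pv L$, and, since these generate $K\pv L$ by definition, $\rho$ is surjective onto $K\pv L$. By construction it is compatible with the canonical embeddings of $K$ and $L$.

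The argument is essentially tautological once one unravels the permutation-theoretic definitions; the only point requiring a moment's care is the setwise stability of $\ul K$ under $K$, which is where the subgroup hypothesis is used. The main conceptual obstacle is simply keeping straight the two roles of $K$ (and $L$), once as a subgroup of $G\pv H$ acting on $\ul G \cup \ul H$, and once as a subgroup of $K\pv L$ acting on $\ul K \cup \ul L$; the lemma asserts that these two incarnations agree after the obvious restriction. Note that the lemma stops short of Proposition~\ref{prop:nat-inf}: proving injectivity of $\rho$ requires additional input, since one must rule out that an element of $\langle K, L\rangle$ acts trivially on $\ul K \cup \ul L$ while being nontrivial on $\ul G \cup \ul H$, and this is precisely where an infiniteness assumption on $K$ and $L$ would be exploited.
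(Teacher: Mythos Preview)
Your proof is correct and follows exactly the same approach as the paper: restrict the $\langle K,L\rangle$-action on $\ul G\cup\ul H$ to the invariant subset $\ul K\cup\ul L$, and observe that this restriction coincides with the defining action of $K\pv L$. You supply more detail on the invariance and on the identification of the image, but the argument is the same.
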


In other words, the canonical epimorphism $K*L\twoheadrightarrow K\pv L$ factors through $\langle K, L \rangle$.

\begin{proof}[Proof of Lemma~\ref{lem:nat-epi}]
Consider the subset $\ul K \cup \ul L$ of $\ul G \cup \ul H$. This set is invariant under $K$ and $L$ and hence under $\langle K, L \rangle$. Therefore, we obtain by restriction an action of $\langle K, L \rangle$ on  $\ul K \cup \ul L$ which coincides with the action defining $K\pv L$.
\end{proof}

The fact that this restriction epimorphism is in general not injective explains that it cannot be inverted to give an embedding as in Proposition~\ref{prop:nat-inf} in the case of finite groups:

\begin{exam}\label{exam:nat}
Consider non-trivial finite groups $G$, $H$ and a subgroup $K<G$ with $2<|K| <|G|$. Define $L=H$.

As in the proof of the Lemma~\ref{lem:nat-epi}, $\langle K, L \rangle$ preserves the subset $\ul K \cup \ul L$ of $\ul G \cup \ul H$. Its complement is therefore also invariant under $\langle K, L \rangle$. This complement is $\ul G \smallsetminus \ul K$, which consists of a non-zero number of cosets of $K$ in $G$. The group $L$ acts trivially on this set, while the $K$-action on each coset is regular. Therefore, we obtain an epimorphism $\langle K, L \rangle\twoheadrightarrow K$.

On the other hand, there cannot be any epimorphism $K \pv L\twoheadrightarrow K$ since otherwise Theorem~\ref{thm:finite} would force $K$ to have order at most two.

(A similar obstruction occurs even with $L$ infinite, if the latter has no quotient isomorphic to $K$. Indeed, since $[K,L]$ is the monolith of $K \pv L$, Proposition~\ref{prop:epi} implies that the only proper quotients of $K \pv L$ are the quotients of $L$ and, possibly, a cyclic group of order two.)
\end{exam}

Now in order to prove Proposition~\ref{prop:nat-inf}, it suffices to apply twice the following asymmetric version (where $H$ is allowed to be finite).

\begin{prop}\label{prop:nat-inf-bis}
Consider two groups $G$, $H$ and a subgroup $K<G$.

If $K$ is infinite, then the canonical embeddings of $K$ and $H$ into $G\pv H$ extend to an embedding $K\pv H \to G\pv H$.
\end{prop}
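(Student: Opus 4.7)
The plan is to construct explicitly an action of $K\pv H$ on $\ul G\cup \ul H$ extending its canonical action on $\ul K\cup \ul H$, in such a way that the generators $K$ and $H$ of $K\pv H$ act on $\ul G\cup\ul H$ exactly as their canonical images inside $G\pv H$. Injectivity will then be automatic because the restriction of this action to $\ul K\cup \ul H$ already coincides with the defining (hence faithful) action of $K\pv H$.

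First, decompose $\ul G$ into $K$-orbits under left multiplication: writing $T\subseteq G$ for a system of representatives of the non-trivial right cosets of $K$, one has $\ul G = \ul K\;\sqcup\;\bigsqcup_{t\in T}\ul{Kt}$, where $K$ acts regularly on each $\ul{Kt}$, while $H$, viewed inside $G\pv H$, acts trivially on $\ul G\smallsetminus\{\ul e\}$. The task is then to define an action of the whole group $K\pv H$ on each $\ul{Kt}$ such that $K$ acts by left multiplication and $H$ acts trivially. The crucial tool is the canonical projection $\pi_K\colon K\pv H\twoheadrightarrow K$ provided by Proposition~\ref{prop:epi}, whose existence requires precisely that $K$ be infinite. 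I would let $\sigma\in K\pv H$ act on $\ul{Kt}$ by $\sigma\cdot\ul{kt}:=\ul{\pi_K(\sigma)\,k\,t}$. Since $\pi_K$ is a homomorphism that restricts to the identity on $K$ and sends $H$ to $e$, this gives a genuine $K\pv H$-action on each coset in which $K$ acts regularly and $H$ trivially.

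Combining these pieces yields a permutation action of $K\pv H$ on $\ul G\cup\ul H$. By construction, for every $k\in K$ the resulting permutation of $\ul G\cup\ul H$ coincides with the permutation that $k$ induces as an element of $G\pv H\subseteq\Sym(\ul G\cup\ul H)$; the same holds for every $h\in H$. Consequently, the homomorphism $K\pv H\to \Sym(\ul G\cup\ul H)$ takes values in $\langle K,H\rangle\subseteq G\pv H$ and is compatible with the canonical inclusions of $K$ and $H$. Faithfulness of the restricted action on $\ul K\cup\ul H$ then forces injectivity of this homomorphism, producing the desired embedding $K\pv H\hookrightarrow G\pv H$; Proposition~\ref{prop:nat-inf} follows by applying this statement twice, first to enlarge $K$ to $G$ and then to enlarge $L$ to $H$.

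The main obstacle — and the precise reason Example~\ref{exam:nat} blocks the corresponding statement for finite $K$ — lies exactly in arranging the $H$-action on the ``extra'' cosets $\ul{Kt}$ compatibly with the defining relations of $K\pv H$. The projection $\pi_K$ achieves this in one stroke, but it exists only when $K$ is infinite; without it, $H$-elements cannot be required to fix every point of $\ul{Kt}$ while simultaneously satisfying whatever non-trivial relations they incur with $K$ inside $K\pv H$. All other steps are bookkeeping on the restriction of an explicit action.
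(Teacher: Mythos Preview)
Your argument is correct and rests on the same underlying observation as the paper's: on each non-trivial coset $\ul{Kt}$ the action of the group generated by $K$ and $H$ factors through the canonical projection to $K$, which exists precisely because $K$ is infinite. The paper packages this dually: rather than building a homomorphism $K\pv H\to G\pv H$, it shows that the restriction epimorphism $\langle K,H\rangle\twoheadrightarrow K\pv H$ of Lemma~\ref{lem:nat-epi} has trivial kernel, using $\pi_G$ to argue that any $\sigma\in\langle K,H\rangle$ fixing $\ul K\cup\ul H$ pointwise must have $\pi_G(\sigma)=e$ (since $\ul K$ is infinite) and hence also fix $\ul G\smallsetminus\ul K$. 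Your construction via $\pi_K$ is exactly the inverse of this restriction map, so the two proofs are mirror images of one another; yours is slightly more explicit about where the embedding lands, while the paper's is marginally shorter since it analyses an already-given map rather than assembling a new action.
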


\begin{proof}[Proof of Proposition~\ref{prop:nat-inf-bis}]
If we retain the notation of Lemma~\ref{lem:nat-epi} and its proof (with $L=H$), what we have to show is that the action of $\langle K, H \rangle$ on the subset $\ul K \cup \ul H$ of $\ul G \cup \ul H$ is faithful. Let thus $\sigma\in \langle K, H \rangle$ be an element in the kernel of this action. Consider the morphism $\pi_G\colon G\pv H \to G$  of Proposition~\ref{prop:epi}, which is defined since $G$ is infinite. The construction of $\pi_G$ given in the proof of Proposition~\ref{prop:epi} shows that $\sigma$ acts on all but finitely many points of $\ul G$ as the multiplication by $\pi_G(\sigma)$. Since $\ul K$ is infinite, it follows from the choice of $\sigma$ that $\pi_G(\sigma)$ is trivial. 

What we have to show is that $\sigma$ acts trivially on $\ul G \smallsetminus \ul K$. Since $H$ fixes this set and $K$ preserves it, the action of $\langle K, H \rangle$ there is given by $\pi_G$; this concludes the proof.
\end{proof}

Now that we know that $G\pv H$ behaves well towards embeddings of infinite groups, it makes sense to consider its compatibility with directed unions of groups --- which are nothing but the concrete realisation of inductive limits with injective structure maps.

Recall thus that $\sK$ is a \textbf{directed} family of subgroups $K<G$ of a group $G$ if any two elements of $\sK$ are contained in a further element of $\sK$.

Given Proposition~\ref{prop:nat-inf}, the following statement follows from the fact that $G\pv H$ is generated by $G$ and $H$.

\begin{prop}\label{prop:union}
Suppose that $G$ is the union of a directed family $\sK$ of infinite subgroups and likewise $H$ of a family $\sL$.

Then the embeddings of Proposition~\ref{prop:nat-inf} realise $G\pv H$ as the union of the directed family of subgroups $K\pv L$ with $K\in \sK$, $L\in \sL$.\qed
\end{prop}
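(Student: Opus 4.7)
The plan is to verify the two components of being a directed union: that the family $\{K \pv L : K \in \sK,\, L \in \sL\}$ is directed under the embeddings furnished by Proposition~\ref{prop:nat-inf}, and that these subgroups exhaust $G \pv H$. A preliminary observation that I would isolate first is that the embedding $K \pv L \hookrightarrow G \pv H$ supplied by Proposition~\ref{prop:nat-inf} is \emph{uniquely} determined by the requirement that it restricts to the canonical inclusions on $K$ and on $L$: this is because $K \pv L$ is generated, by construction, by the images of $K$ and $L$, so any two homomorphisms out of $K \pv L$ agreeing on $K \cup L$ must coincide.

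Given this uniqueness, directedness is essentially bookkeeping. For $K_1, K_2 \in \sK$ and $L_1, L_2 \in \sL$, pick $K \in \sK$ containing $K_1 \cup K_2$ and $L \in \sL$ containing $L_1 \cup L_2$ using directedness of $\sK$ and $\sL$. Proposition~\ref{prop:nat-inf} provides embeddings $K_i \pv L_i \hookrightarrow K \pv L$ as well as $K \pv L \hookrightarrow G \pv H$. Their composition restricts to the canonical inclusion on both $K_i$ and $L_i$; by the uniqueness observation above, this composition agrees with the direct embedding $K_i \pv L_i \hookrightarrow G \pv H$, so the family is indeed directed inside $G \pv H$.

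For the exhaustion part, recall that $G \pv H$ is by definition generated as a group by the permutations coming from $G$ and from $H$. Hence any $\sigma \in G \pv H$ can be written as a finite product $\sigma = x_1 x_2 \cdots x_n$ with each $x_j \in G \cup H$. By directedness of $\sK$, I would choose $K \in \sK$ containing every $x_j$ that lies in $G$; similarly $L \in \sL$ containing every $x_j$ in $H$. Then $\sigma$ lies in the subgroup of $G \pv H$ generated by $K$ and $L$, which is precisely the image of the embedding $K \pv L \hookrightarrow G \pv H$ (since $K \pv L$ is generated by $K$ and $L$, and the embedding restricts to the canonical inclusions on each factor).

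There is no serious obstacle: the genuine content has already been absorbed into the existence and uniqueness of the embeddings of Proposition~\ref{prop:nat-inf}, and the only mild subtlety is checking the compatibility of the embeddings $K_i \pv L_i \hookrightarrow K \pv L \hookrightarrow G \pv H$ with the direct ones, which the uniqueness remark dispatches immediately.
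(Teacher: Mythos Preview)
Your proof is correct and follows exactly the approach the paper intends: the paper simply remarks that, given Proposition~\ref{prop:nat-inf}, the statement follows from the fact that $G\pv H$ is generated by $G$ and $H$, and leaves the rest implicit (the statement carries a \qed with no proof). You have merely made explicit the directedness check and the exhaustion argument, including the uniqueness observation needed to ensure the various embeddings are compatible.
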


\begin{exam}\label{exam:GG_0}
Let $G_0$ be an infinite group and define the sequence $G_n$ by $G_{n+1} = G_n \pv G_0$. We thus obtain a group $G=\lim_n G_n$ satisfying $G \cong G\pv G_0$.
\end{exam}

There is an asymmetric version of Proposition~\ref{prop:union} where $H$ may be finite.

\begin{prop}\label{prop:union:bis}
Suppose that $G$ is the union of a directed family $\sK$ of infinite subgroups and let $H$ be any group.

Then the embeddings of Proposition~\ref{prop:nat-inf-bis} realise $G\pv H$ as the union of the directed family of subgroups $K\pv H$ with $K\in \sK$.\qed
\end{prop}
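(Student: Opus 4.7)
The plan is to mirror the proof pattern of Proposition~\ref{prop:union}, replacing the symmetric Proposition~\ref{prop:nat-inf} by its asymmetric counterpart Proposition~\ref{prop:nat-inf-bis}. Since every $K \in \sK$ is infinite, Proposition~\ref{prop:nat-inf-bis} supplies an embedding $\iota_K \colon K \pv H \hookrightarrow G \pv H$ extending the canonical inclusions of $K$ and of $H$. The image of $\iota_K$ is then precisely the subgroup $\langle K, H\rangle$ of $G \pv H$ (in the notation of Lemma~\ref{lem:nat-epi}) generated by the canonical images of $K$ and $H$.

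Next I would verify that the family $\{\iota_K(K \pv H)\}_{K \in \sK}$ is directed. Given $K_1, K_2 \in \sK$, choose $K_3 \in \sK$ containing both; then
\[
\iota_{K_i}(K_i \pv H) \;=\; \langle K_i, H\rangle \;\subseteq\; \langle K_3, H\rangle \;=\; \iota_{K_3}(K_3 \pv H) \qquad (i=1,2),
\]
so directedness of $\sK$ is inherited.

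Finally I would show that the union $\bigcup_{K \in \sK} \iota_K(K \pv H)$ exhausts $G \pv H$. Since $G \pv H$ is, by definition, generated by (the canonical images of) $G$ and $H$, every $\sigma \in G \pv H$ can be written as a finite product of elements from $G \cup H$. The $G$-letters appearing in such a product form a finite subset of $G$, hence --- by directedness of $\sK$ together with $G = \bigcup \sK$ --- lie in some common $K \in \sK$. Therefore $\sigma \in \langle K, H\rangle = \iota_K(K \pv H)$, as required.

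There is no substantial obstacle here; the only modification compared with Proposition~\ref{prop:union} is that we do not need to express $H$ as a directed union, which is exactly what the asymmetric Proposition~\ref{prop:nat-inf-bis} allows.
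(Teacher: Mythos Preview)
Your argument is correct and is exactly the reasoning the paper leaves implicit (the statement carries a \qed\ with no proof, relying as in Proposition~\ref{prop:union} on the fact that $G\pv H$ is generated by $G$ and $H$). Nothing needs to be added.
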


As mentioned in the introduction, the operation~$\pv$, while it satisfies an obvious commutativity isomorphism, is not associative. We first explain this in the case of infinite groups since this is where $G\pv H$ still resembles a product.

\begin{prop}
Let $G$, $H$ and $J$ be infinite groups. There is no homomorphism
$$f\colon  G \pv \left(H \pv J\right)\lra  \left(G \pv H\right) \pv J$$
compatible with the canonical inclusions of $G$, $H$ and $J$ into each of the two sides.
\end{prop}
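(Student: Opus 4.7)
The plan is to exploit the asymmetry of Proposition~\ref{prop:3tor} under reassociation. Assuming $f$ exists, fix non-trivial $g\in G$, $h\in H$, $j\in J$ (available since the three groups are infinite) and consider $c = [g,[h,j]]$. Compatibility of $f$ with the inclusions of $G$, $H$, $J$, together with the homomorphism property, forces $f$ to send the element $c$ computed in the domain to the element of the same name computed in the codomain; so it is enough to show that $c$ has incompatible orders on the two sides.

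On the domain, view $G\pv(H\pv J)$ as $A\pv B$ with $A = G$ and $B = H\pv J$. Then $g \in A$ and $[h,j] \in B$, so Proposition~\ref{prop:3tor} yields $c^3 = e$ directly. On the codomain, view $(G\pv H)\pv J$ as $K\pv J$ with $K = G\pv H$. By Proposition~\ref{prop:tricycle} applied to $K \pv J$, the inner commutator is the $3$-cycle $[h,j] = \tricycle{e}{h}{j}$ on $\ul K \cup \ul J$, and since $g \in K$ acts on $\ul K$ by left multiplication while fixing $\ul J \smallsetminus \{\ul e\}$, conjugation gives $g\,[h,j]\,g\inv = \tricycle{g}{gh}{j}$. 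Multiplying these two overlapping $3$-cycles produces, by a short direct computation, the $5$-cycle $(\ul e;\ul g;\ul{gh};\ul j;\ul h)$ on the five points $\ul e, \ul g, \ul h, \ul{gh}, \ul j$, which are pairwise distinct thanks to $g, h, j \neq e$ and $G \cap H = \{e\}$ inside $G \pv H$ (forcing $\ul g, \ul h, \ul{gh}$ to be three different points of $\ul K$). Hence $c$ has order $5$ in the codomain, contradicting $f(c)^3 = f(c^3) = f(e) = e$.

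The only step I expect to require genuine care is the final permutational computation, namely verifying that the two overlapping $3$-cycles chain into a genuine $5$-cycle rather than collapsing to something of smaller order. The essential input is ensuring that the five support points are pairwise distinct, which reduces to checking that $\ul g$, $\ul h$, and $\ul{gh}$ are three different points of $\ul{G\pv H}$—this is guaranteed precisely by the hypotheses that $g$ and $h$ are non-trivial in their respective factors.
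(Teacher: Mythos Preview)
Your argument is correct and constitutes a genuinely different route from the paper's. The paper argues structurally: in the codomain $(G\pv H)\pv J$, the canonical epimorphism of Proposition~\ref{prop:epi} onto $G\pv H$ shows that the normal closure of $J$ meets $G\pv H$ trivially, so non-trivial $g\in G$ and $h\in H$ still fail to commute modulo that closure; whereas in the domain $G\pv(H\pv J)$, the normal closure of $J$ contains the monolith $[G, H\pv J]$ and hence all of $[G,H]$, forcing $G$ and $H$ to commute there. A compatible $f$ would carry the one normal closure into the other, giving the contradiction.

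Your proof instead tracks a single explicit element $c=[g,[h,j]]$ and compares its order: a $3$-cycle (so $c^3=e$) in the domain via Proposition~\ref{prop:tricycle}, versus a genuine $5$-cycle in the codomain after the overlapping-cycle computation. This is very much in the spirit of Example~\ref{ex:obs}. The distinctness check for the five points is exactly right, resting on $G\cap H=\{e\}$ inside $G\pv H$. Your approach has the virtue of concreteness and uses nothing beyond Proposition~\ref{prop:tricycle}; the paper's approach avoids the permutational bookkeeping and ties the obstruction directly to the monolith and the canonical projection, which is perhaps more illuminating structurally.
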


\begin{proof}
In the right hand side, the normal closure $N_\mathrm{right}(J)$ of $J$ meets trivially $(G \pv H)$ by Proposition~\ref{prop:epi}. Therefore, the images of $G$ and $H$ in the right hand side generate $(G \pv H)$ also modulo $N_\mathrm{right}(J)$. In particular, non-trivial elements of $G$ and $H$ do not commute in that quotient, see Proposition~\ref{prop:tricycle}. By contrast, in the left hand side, the normal closure $N_\mathrm{left}(J)$ of $J$ contains the monolith $\left[G, (H \pv J)\right]$ which contains $[G,H]$; hence $G$ and $H$ commute in the respective quotient. This shows that $f$ cannot exist.
\end{proof}

For finite groups, the non-associativity can even be detected at the level of the order of the groups:

\begin{exam}
Let $G$, $H$ and $J$ be non-trivial finite groups of order $p$, $q$ and $r$ respectively. In order to avoid distinguishing cases in Theorem~\ref{thm:finite}, assume that the $2$-Sylow subgroups are not cyclic non-trivial (for instance, assume $p,q,r$ odd). Then $G\pv H = \Alt(p+q-1)$ still satisfies this property, because its $2$-Sylow subgroups have solubility length~$\geq 2$ (see Theorem~3 in~\cite{Dmitruk-Sushanskii}). It follows that $(G\pv H)\pv J$ is the alternating group on $\frac12 (p+q-1)! + r-1$ elements, and likewise $G\pv (H\pv J)$ on $\frac12 (r+q-1)! + p-1$ elements. The former number is strictly greater than the latter if and only if $p>r$, which shows that the two products have different orders as soon as $G$ and $J$ do.
\end{exam}

Finally, we return to the lack of naturality with respect to non-injective morphisms with a simple combinatorial observation.

\begin{exam}\label{ex:obs}
Let $G$ and $H$ be any groups. Suppose that $g, g'\in G$ are distinct non-trivial elements and likewise $h, h'\in H$. Recall from Proposition~\ref{prop:tricycle} that the commutator $[g,h]$ in $G\pv H$ is the cycle $\tricycle{e}{g}{h}$. It follows that the element $\sigma= [g,h] [g', h']$ is the cycle $(\ul e; \ul g'; \ul h'; \ul g; \ul h)$ which has order~$5$.

If on the other hand $g=g'\neq e$ (but $h, h'$ are still distinct and non-trivial), then $\sigma$ has order~$2$ because one checks that it is the product of the transpositions $(\ul e; \ul h)$ and $(\ul g; \ul h')$.

If finally $g=g'\neq e$ and $h=h'\neq e$ then Proposition~\ref{prop:tricycle} implies that $\sigma$ has order~$3$.

Since $5$, $3$ and $2$ are coprime, we see that no pair of homomorphisms defined on $G$ and $H$ can be extended to a map on $G\pv H$ that respects these combinatorics unless it is injective (or trivial) on $\{g, g'\}$ and on $\{h, h'\}$.
\end{exam}

\section{Approximations by finite groups}\label{sec:LEF}
We turn our attention to the class of groups locally embeddable into finite groups (LEF) introduced in~\cite{Vershik-Gordon}. We first justify the corollary to Theorem~\ref{thm:LEF}, namely that $G \pv H$ is never finitely presented when $G$ and $H$ are residually finite and infinite.

\begin{proof}[Proof of Corollary~\ref{cor:rf}]
By Theorem~\ref{thm:LEF}, $G \pv H$ is LEF. If it is also finitely presented, then it must be residually finite (Theorem p.~58 in~\cite{Vershik-Gordon}). This is impossible when $G$ and $H$ are infinite, see Remark~\ref{rem:resfin}.
\end{proof}

We now establish the main result of this section.

\begin{proof}[Proof of Theorem~\ref{thm:LEF}]
The main case is when both $G$ and $H$ are infinite countable; we begin with this situation. We endow $G$ and $H$ with proper lengths $\ell_G$ and $\ell_H$ respectively. We recall that this means $\ell_G(g)$ is the distance $d(g,e)$ for some proper left-invariant distance $d$ on $G$, while properness means that all balls are finite. This exists for every countable group; for instance, $\ell_G(g)$ can be taken to be a suitably weighted word length. We write $B_G (n)$ and $B_H (n)$ for the corresponding closed balls of radius $n \in \NN$. Define $C_n \se \ul G \cup \ul H$ to be the set $\ul{B_G (n)} \cup \ul{B_H (n)}$. Let $F_n \se G \pv H$ be the set $B_G (n) B_H (n) \Alt(C_n)$, i.e.
\begin{equation*}
    F_n = \left\{gha \  \middle| \  \ell_G (g) \leq n,\ \ell_H (h) \leq n,\ \Supp(a) \subseteq C_n\right\}.
\end{equation*}

    Observe that $G \pv H = \bigcup_n F_n$ since $G \pv H = G \ltimes ( H \ltimes \Altf(\ul G \cup \ul H))$. Moreover, for any $\sigma \in F_n$, the elements $g \in G, h \in H$ and $a \in \Alt(C_n)$ such that $\sigma = gha$ are uniquely determined.

    We record for later use that, if $\sigma_i = g_i h_i a_i \in F_n$ with $g_i \in B_G (n)$, $h_i \in B_H (n)$ and $a_i \in \Alt(C_n)$ ($i = 1, 2$), then 
\begin{align}\label{eq:sof}
    \sigma_1 \sigma_2  &= g_1 h_1 a_1 g_2 h_2 a_2 \notag \\
        &= g_1 h_1 (g_2 h_2) (g_2 h_2)\inv a_1 g_2 h_2 a_2 \notag \\
        &= (g_1 g_2) h_1 [h_1\inv, g_2\inv] h_2 (g_2 h_2)\inv a_1 g_2 h_2 a_2 \notag \\
        &= (g_1 g_2) (h_1 h_2) \left\{ \left(h_2\inv [h_1\inv, g_2\inv] h_2 \right) \left( (g_2 h_2)\inv a_1 (g_2 h_2) \right) a_2 \right\}.
\end{align}
One the one hand, $g_1 g_2 \in B_G (2n)$ and  $h_1 h_2\in B_H (2n)$. On the other hand, $[h_1\inv, g_2\inv] \in \Alt(C_n)$ by Proposition~\ref{prop:tricycle} and moreover the conjugate of $\Alt(C_n)$ by any element of $B_G (n)$ or of $B_H (n)$ remains in $\Alt(C_{2n})$. Therefore, the calculation~\eqref{eq:sof} shows in particular that $F_n F_n \subseteq F_{2n}$. Our goal is to build, for each $n$, an injective map from $F_{2n}$ into a finite group that is multiplicative on $F_n$.

    Let $G_n$ and $H_n$ be respective finite quotients of $G$ and $H$ such that the associated epimorphisms $\pi_{G_n}\colon G \rightarrow G_n$ and $\pi_{H_n}\colon H \rightarrow H_n$ are injective on $B_G (4n)$ and on $B_H (4n)$. Let $\pi\colon \ul G \cup \ul H \rightarrow \ul{G_n} \cup \ul{H_n}$ be the (set-theoretic) surjection defined by $\pi(\ul g) = \ul{\pi_{G_n} (g)}$ and $\pi(\ul h) = \ul{\pi_{H_n} (h)}$ for $g \in G$ and $h \in H$. Observe that, by construction, the map $\pi$ enjoys a weak form of equivariance in the following sense:
\begin{equation}\label{eq:sofequiv}
\begin{aligned}
    \pi (g z)  &= \pi_{G_n} (g) \pi(z) && \text{for any } g \in G \text{ and } z \in  \ul G \cup \left(\ul H \smallsetminus \ul{\ker \pi_{H_n}} \right), \\
    \pi (h z)  &= \pi_{H_n} (h) \pi(z) && \text{for any } h \in H \text{ and } z \in \left( \ul G \smallsetminus \ul{\ker \pi_{G_n}} \right) \cup \ul H.
\end{aligned}
\end{equation}
Indeed, if $z \in \ul G$, then $gz \in \ul G$ and $\pi(gz) = \pi_{G_n} (gz) = \pi_{G_n} (g) \pi_{G_n} (z) = \pi_{G_n} (g) \pi (z)$. If on the other hand $z \in \ul H \smallsetminus \ul{\ker \pi_{H_n}}$, then $gz = z$ and $\pi(z) = \pi_{H_n} (z) \in \ul{H_n} \smallsetminus \{ \ul e \}$, hence $\pi(gz) = \pi(z) = \pi_{G_n} (g) \pi(z)$. Similar computations hold for $H$.

 We write $C'_k = \pi(C_k)$, observing that, by our choices of $G_n$ and $H_n$, the map $\pi$ induces a bijection between $C_k$ and $C'_k$ for any $k \leq 4n$.

    Let finally $\Phi_n\colon F_{2n} \rightarrow G_n \pv H_n$ be defined by $\Phi_n (gha) = \Phi_n(g) \Phi_n (h) \Phi_n (a)$, where
\begin{align*}
    \Phi_n (g)  &= \pi_{G_n} (g) && \text{for } g \in G, \\
    \Phi_n (h)  &= \pi_{H_n} (h) && \text{for } h \in H, \\
    \Phi_n (a)(x)  &= \begin{cases} \pi (a (y)) & \text{if } x \in C'_{2n}, \text{where } y \in C_{2n}, \pi(y) = x \\ x & \text{if } x \not\in C'_{2n} \end{cases} && \text{for } a \in \Alt(C_{2n}).
\end{align*}
Observe that $\Phi_n (a)$ is indeed well-defined since $a$ preserves $C_{2n}$ and $\pi$ induces a bijection between $C_{2n}$ and $C'_{2n}$. Moreover, the relation
\begin{equation}\label{eq:sofphialt}
    \Phi_n (a) (\pi(y)) = \pi (a (y))
\end{equation}
holds more generally for any $y \in C_{4n}$. Indeed, if $y \in C_{4n} \smallsetminus C_{2n}$, then $\pi(y) \in C'_{4n} \smallsetminus C'_{2n}$, hence $\Phi_n (a)$ fixes $\pi(y)$. But $a$ fixes $y$ also since $\Supp (a) \subseteq C_{2n}$.

\bigskip

Let us first prove that $\Phi_n$ is injective. Let $gha$ and $g'h'a'$ in $F_{2n}$ be such that $\Phi_n (gha) = \Phi_n (g'h'a')$. Applying the equality
\begin{equation*}
    \Phi_n (g) \Phi_n (h) \Phi_n (a) x = \Phi_n (g') \Phi_n (h') \Phi_n (a') x \qquad \qquad (x \in \ul{G_n} \cup \ul {H_n})
\end{equation*}
first to some $x \in \ul {G_n} \smallsetminus C'_{2n}$ then to some $x \in \ul {H_n} \smallsetminus C'_{2n}$ yields successively
\begin{equation*}
    \Phi_n (g) = \Phi_n (g') \qquad \text{and} \qquad \Phi_n (h) = \Phi_n (h'),
\end{equation*}
since $\Phi_n (a)$ and $\Phi_n (a')$ have support in $C'_{2n}$ and since $G_n$ (resp.~$H_n$) acts regularly on $\ul {G_n}$ (resp.~on $\ul {H_n}$) and trivially on $\ul {H_n} \smallsetminus \{\ul e \}$ (resp.~on $\ul {G_n} \smallsetminus \{\ul e \}$). As $\pi_{G_n}$ and $\pi_{H_n}$ are injective on $B_G (2n)$ and on $B_H (2n)$, respectively, we conclude that $g = g'$ and $h = h'$. Lastly, the equality
\begin{equation*}
    \Phi_n (a) x = \Phi_n (a') x \qquad \qquad (x \in \ul{G_n} \cup \ul {H_n})
\end{equation*}
becomes
\begin{equation*}
    \pi(a(y)) = \pi(a'(y))
\end{equation*}
for any $y \in C_{2n}$, hence $a = a'$ since $\pi$ is injective on $C_{2n}$, which is the support of both $a$ and $a'$. Hence $\Phi_n$ is indeed injective.

Our goal is now to prove that $\Phi_n$ is multiplicative on $F_n$, i.e.~that if $\sigma_1, \sigma_2 \in F_n$, then $\Phi_n (\sigma_1 \sigma_2) = \Phi_n (\sigma_1) \Phi_n (\sigma_2)$. Obviously, $\Phi_n$ is multiplicative on $B_G (n)$, on $B_H (n)$ and on $\Alt (C_n)$. Hence, in view of Eq.~\eqref{eq:sof}, we are left to prove the following equality:
\begin{multline}\label{eq:sofmult}
    \Phi_n \left(h_2\inv [h_1\inv, g_2\inv] h_2 \right) \Phi_n \left( (g_2 h_2)\inv a_1 (g_2 h_2) \right) = \\ \left(\Phi_n(h_2)\inv [\Phi_n (h_1)\inv, \Phi_n (g_2)\inv] \Phi_n (h_2) \right) \left( (\Phi_n (g_2 ) \Phi_n (h_2))\inv \Phi_n (a_1) (\Phi_n (g_2) \Phi_n(h_2) ) \right)
\end{multline}

We begin by the first term of each product. The permutation $h_2\inv [h_1\inv, g_2\inv] h_2$ is the $3$-cycle $\tricycle{h_2\inv}{h_2\inv h_1\inv}{g_2\inv}$. Each point of this cycle belongs to $C_{2n}$, hence
\begin{equation*}
    \Phi_n (h_2\inv [h_1\inv, g_2\inv] h_2) = \tricycle{\pi_{H_n} (h_2\inv)}{\pi_{H_n} (h_2\inv h_1\inv)}{\pi_{G_n}(g_2\inv)}.
\end{equation*}
On the other hand,
\begin{align*}
    \Phi_n(h_2)\inv [\Phi_n (h_1)\inv, \Phi_n (g_2)\inv] \Phi_n (h_2) &= \pi_{H_n}(h_2)\inv [\pi_{H_n} (h_1)\inv, \pi_{G_n} (g_2)\inv] \pi_{H_n} (h_2) \\
        &= (\ul{\pi_{H_n} (h_2\inv)} ; \pi_{H_n} (h_2\inv) \ul{\pi_{H_n}(h_1\inv)} ; \ul{\pi_{G_n}(g_2\inv)}).
\end{align*}
But thanks to the equivariance~\eqref{eq:sofequiv}, we have
\begin{equation*}
    \pi_{H_n} (h_2\inv) \ul{\pi_{H_n}(h_1\inv)} = \pi_{H_n} (h_2\inv) \pi (h_1\inv) = \pi (h_2\inv h_1\inv) = \ul{\pi_{H_n} (h_2\inv h_1 \inv)},
\end{equation*}
hence
\begin{equation*}
    \Phi_n \left(h_2\inv [h_1\inv, g_2\inv] h_2 \right) = \Phi_n(h_2)\inv [\Phi_n (h_1)\inv, \Phi_n (g_2)\inv] \Phi_n (h_2),
\end{equation*}
as needed.

We now turn our attention to the second terms of each side of \eqref{eq:sofmult}. Let $x \in C'_{2n}$ and $y \in C_{2n}$ such that $\pi(y) = x$. 

Since, on the one hand, $C_{4n} \se \left( \ul G \smallsetminus \ul{\ker \pi_{G_n}} \right) \cup \left(\ul H \smallsetminus \ul{\ker \pi_{H_n}} \right) \cup \{ \ul e\}$ and, on the other hand, $g_2 h_2 C_{2n} \se C_{3n}$ and $(g_2 h_2)\inv C_{3n} \se C_{4n}$ we can use the equivariance relations~\eqref{eq:sofequiv} and the fact that $a_1$ preserves $C_{3n}$ (or indeed any superset of $C_n$) in order to compute:
\begin{align*}
      \Phi_n \left( (g_2 h_2)\inv a_1 (g_2 h_2) \right)(x)  &= \pi \left( \left\{ (g_2 h_2)\inv a_1 (g_2 h_2) \right\}(y) \right) \\
        &= \pi_{H_n} (h_2)\inv \pi_{G_n}(g_2)\inv \pi(a_1 (g_2 h_2 y)). \\
\intertext{As $g_2 h_2 y \in C_{3n}$, we can use the relation~\eqref{eq:sofphialt} and, once again, the equivariance~\eqref{eq:sofequiv}, to conclude the computations:}
      \dots  &= \pi_{H_n} (h_2)\inv \pi_{G_n}(g_2)\inv \Phi_n(a_1) (\pi_{G_n} (g_2) \pi_{H_n} (h_2) x) \\
        &= \left( (\Phi_n (g_2 ) \Phi_n (h_2))\inv \Phi_n (a_1) (\Phi_n (g_2) \Phi_n(h_2) ) \right) (x).
\end{align*}

If now $x \not\in C'_{2n}$, then $\Phi_n (g_2) \Phi_n (h_2) x \not\in C'_n$. Either this points belongs to $C'_{2n} \smallsetminus C'_n$ or it does not. In both cases it is fixed by $\Phi_n (a_1)$ since the support of $a_1$ is included in $C_n$. Hence
\begin{align*}
        \left( (\Phi_n (g_2 ) \Phi_n (h_2))\inv \Phi_n (a_1) (\Phi_n (g_2) \Phi_n(h_2) ) \right) (x) &= (\Phi_n (g_2 ) \Phi_n (h_2))\inv (\Phi_n (g_2) \Phi_n(h_2) ) (x) \\
        &= x \\
        &= \Phi_n \left( (g_2 h_2)\inv a_1 (g_2 h_2) \right)(x) 
\end{align*}
(the last line being due to $x \not\in C'_{2n}$).

To sum up, we have checked that
\begin{equation*}
        (\Phi_n (g_2 ) \Phi_n (h_2))\inv \Phi_n (a_1) (\Phi_n (g_2) \Phi_n(h_2)) = \Phi_n \left( (g_2 h_2)\inv a_1 (g_2 h_2) \right)
\end{equation*}
and hence that the equality~\eqref{eq:sofmult} holds, namely that $\Phi_n$ is indeed multiplicative on $F_n$. This concludes the proof that $G \pv H$ is LEF when the residually finite groups $G, H$ are both infinite countable.

\medskip
We now extend the statement to the case where $G, H$ are both infinite. The family $\sK$ of all infinite countable subgroups $K<G$ is directed and covers $G$. The corresponding fact holds for the family $\sL$ of infinite countable subgroups $L<H$. Proposition~\ref{prop:union} therefore realises $G\pv H$ as the directed union of all $K\pv L$, which are LEF by the first case. On the other hand, the LEF property passes to directed unions by its very definition.

\medskip
It remains to consider the case where at least one of $G$ or $H$ is finite. If both are finite, then so is $G\pv H$ and LEF holds trivially. We can therefore assume that $G$ is infinite and $H$ finite. As before, we cover $G$ by all its infinite countable subgroups and apply now Proposition~\ref{prop:union:bis}; this reduces us to the case where $G$ is infinite countable and $H$ finite. Now the argument is a simpler variant of the argument given for the main case above, and becomes very close to the argument given for Lemma~3.2 in~\cite{Elek-Szabo06}. Specifically, the above computations still hold with the following minor modifications:
\begin{itemize}
    \item $C_n$ is now the set $\ul {B_G (n)} \cup \ul H$.
    \item $F_n$ is now $B_G (n) \Alt (C_n)$, or $B_G (n) \Sym(C_n)$ if $H$ has a nontrivial cyclic $2$-Sylow. This distinction is made to ensure that $\bigcup_n F_n = G \pv H$, since the latter is now $G \ltimes ([G, H]H)$ and $[G,H]H$ is $\Altf(\ul G \cup \ul H)$ or $\Symf(\ul G \cup \ul H)$ depending on whether $H$ admits a nontrivial cyclic $2$-Sylow (compare with Theorem~\ref{thm:finite}).
    \item $G_n$ is still chosen so that the projection map $\pi_{G_n}$ is injective on $B_G (4n)$ but $H_n$ is simply $H$.
    \item The surjection $\pi$ is defined as above (where $\pi_{H_n}$ is thus the identity) and $C'_k$ is again $\pi(C_k)$.
    \item The map $\Phi_n$ is defined similarly on $B_G (2n)$ and on $\Alt (C_{2n})$ (or $\Sym (C_{2n})$).
    \item The relation that needs to be proved in order to establish that $\Phi_n$ is multiplicative on $F_n$ is now:
\begin{equation*}
    \Phi_n (g_1 g_2) \Phi_n (g_2\inv a_1 g_2 a_1) = \Phi_n (g_1) \Phi_n (g_2) \left( \Phi_n (g_2)\inv \Phi_n (a_1) \Phi_n (g_2) \Phi_n (a_1)\right).
\end{equation*}
Since $\Phi_n$ is again multiplicative on $B_G (n)$ and on $\Alt (C_n)$ (or $\Sym(C_n)$), this equation reduces to
\begin{equation*}
    \Phi_n (g_2\inv a_1 g_2) = \left( \Phi_n (g_2)\inv \Phi_n (a_1) \Phi_n (g_2)\right).
\end{equation*}
The latter can be proved, as for the second terms of~\eqref{eq:sofmult}, by applying each side to some $x \in \ul {G_n} \cup \ul {H_n}$ and distinguishing between the cases $x \in C'_{2n}$ and $x \not\in C'_{2n}$.
\end{itemize}
\end{proof}

\section{Topologies, completions and generalisations}\label{sec:gen}
Endow the group $\Sym(\ul G \cup \ul H)$ with the topology of pointwise convergence, which is Polish if $G$ and $H$ are countable. Since the group $G \pv H$ acts highly transitively on $\ul G \cup \ul H$ (unless both $G$ and $H$ are finite), it is a dense subgroup of $\Sym(\ul G \cup \ul H)$. An equivalent reformulation is to consider the topology of pointwise convergence on $G \pv H$; then the statement is that the topological group $G \pv H$ admits $\Sym(\ul G \cup \ul H)$ as its (upper) completion.

We shall introduce two other completions $K_G$ and $K_H$ of $G \pv H$, which are (usually non-discrete) Polish groups when $G$ and $H$ are countable, but with the additional property that the diagonal embedding of  $G \pv H$ into $K_G \times K_H$ is \emph{discrete}.

Thus, in a very loose sense, the situation is analogous to an arithmetic group (instead of $G \pv H$) with its diagonal embedding into the product of all local completion at all places (instead of $K_G$ and $K_H$).

\medskip
To set up the notation, let $X$ be any set. We recall that a \emph{bornology} on $X$ is a family $\Born \subseteq \Power (X)$ that is closed under subsets and finite unions. We define $\Sym_\Born (X)$ as the subgroup of $\Sym(X)$ that preserves the bornology $\Born$, i.e. $g \in \Sym_\Born (X)$ if 
\begin{equation*}
    B \in \Born \quad \Longleftrightarrow \quad g(B) \in \Born.
\end{equation*}
We endow $\Sym_\Born$ with the \emph{topology of uniform discrete convergence on $\Born$}, an identity neighbourhood basis of which is given by the family of pointwise fixators $\Fix(B)$ for $B \in \Born$. For instance, if $\Born$ is the family of all finite sets, then $\Sym_\Born (X) = \Sym (X)$ and we recover the usual topology of pointwise convergence.

It can easily be checked by hand (and it follows from the more general Theorem~2.2 in~\cite{Gheysens_ULB_arx}) that the group $\Sym_\Born (X)$ is complete for the upper uniform structure. In particular, the group $\Sym_\Born (X)$ is \emph{completely metrisable} if $\Born$ is \emph{countably generated}, that is if there exists a countable subfamily $\Born' \subseteq \Born$ such that any $B \in \Born$ is a subset of some $B' \in \Born'$. 

\medskip

Coming back to $G \pv H$, we observe that this group preserves two natural bornologies, namely the family $\Born_G$ of all subsets $B \se \ul G \cup \ul H$ such that $B \cap \ul G$ is finite, and the analogous family $\Born_H$ of all subsets $B$ such that $B \cap \ul H$ is finite. (There is a third invariant bornology, $\Born_G \cap \Born_H$, which is nothing more than the family of finite subsets.) Let $K_G$ (resp.~$K_H$) be the closure of $G \pv H$ in the topological group $\Sym_{\Born_G} (\ul G \cup \ul H)$ (resp.~$\Sym_{\Born_H} (\ul G \cup \ul H)$). By the above discussion, $K_G$ and $K_H$ are Polish if $G$ and $H$ are countable.

\begin{prop}
    The group $K_G$ is non-discrete if (and only if) $G$ is infinite.
\end{prop}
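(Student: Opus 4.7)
The plan is to treat the two directions separately; both are direct consequences of unfolding the definitions, so the main work is just identifying the right witness. For the parenthetical ``only if'' direction, if $G$ is finite then every subset $B \se \ul G \cup \ul H$ satisfies $B \cap \ul G$ finite, so $\Born_G = \Power (\ul G \cup \ul H)$. In particular $\Fix (\ul G \cup \ul H) = \{1\}$ is an identity neighbourhood in $\Sym_{\Born_G} (\ul G \cup \ul H)$, so the ambient topology is discrete and hence so is $K_G$.

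For the ``if'' direction, suppose $G$ is infinite and let $U$ be an arbitrary identity neighbourhood of $K_G$. By definition of the topology of uniform discrete convergence on $\Born_G$, there exists $B \in \Born_G$ (that is, a subset $B$ with $B \cap \ul G$ finite) such that $\Fix(B) \cap K_G \se U$. I would produce a non-trivial element of $G\pv H$ lying in $\Fix(B)$; since $G\pv H \se K_G$, this is already enough.

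The natural candidate comes from Proposition~\ref{prop:containsaltf}, which provides $\Altf (\ul G \cup \ul H) \se G \pv H$. Because $\ul G$ is infinite but $B \cap \ul G$ is finite, we can pick three distinct points $x, y, z \in \ul G \smallsetminus B$. The $3$-cycle $(x; y; z)$ lies in $\Altf (\ul G \cup \ul H)$, is non-trivial, and fixes $\ul G \cup \ul H$ outside $\{x, y, z\}$, hence in particular fixes every point of $B$. Thus $U$ meets $K_G \smallsetminus \{1\}$, proving that $K_G$ is not discrete.

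I do not anticipate any genuine obstacle: once the definitions of $\Born_G$ and of the topology on $\Sym_\Born$ are spelled out, the argument reduces to finding a non-trivial permutation supported in the complement of a prescribed set of $\ul G$-finite type, and $\Altf (\ul G \cup \ul H)$ supplies this witness immediately.
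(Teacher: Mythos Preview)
Your proof is correct and in fact takes a simpler route than the paper's. For the ``only if'' direction you argue exactly as the paper does. For the ``if'' direction, you show non-discreteness in the most direct way possible: given an identity neighbourhood, you exhibit a non-trivial $3$-cycle in $\Altf(\ul G \cup \ul H) \se G\pv H$ supported entirely outside the relevant $B$. This works because $B \cap \ul G$ is finite while $\ul G$ is infinite.

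The paper instead establishes the stronger fact that $K_G$ strictly contains $G\pv H$: it constructs an explicit permutation $\varphi$ of $\ul G \cup \ul H$ (built from infinitely many disjoint $2$-cycles and $3$-cycles on $\ul G$) which lies in the closure $K_G$ but cannot lie in $G\pv H$ because it has orbits of two different finite lengths on $\ul G$, whereas any element of $G\pv H$ acts on a cofinite subset of $\ul G$ as left translation by a single group element. From $K_G \supsetneq G\pv H$ and density, non-discreteness follows. So the paper's approach buys the extra information that the completion is genuinely larger than $G\pv H$, at the cost of a more elaborate construction; your argument is more economical for the statement as written. Both proofs rely on $\Altf(\ul G \cup \ul H) \se G\pv H$ from Proposition~\ref{prop:containsaltf}, and hence both implicitly use that $H$ is non-trivial.
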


\begin{proof}
 If $G$ is finite, then $\Born_G = \Power (\ul G \cup \ul H)$ and hence $\Sym_{\Born_G} (\ul G \cup \ul H)$ is the whole group $\Sym (\ul G \cup \ul H)$ endowed with the discrete topology.

    For the other direction, it suffices to exhibit a permutation $\varphi \in K_G$ not in $G \pv H$. Since $G$ is infinite, we can partition it into two infinite subsets $G_2 \sqcup G_3$ and then choose further partitions of $G_2$ into infinitely many pairs and of $G_3$ into infinitely many triples. Let $\varphi$ be the permutation that acts as a transposition on each pair of $\ul{G_2}$, as a $3$-cycle on each triple of $\ul{G_3}$ and as the identity on $\ul H \smallsetminus \{ \ul e \}$. Then, for any $B \in \Born_G$, there is an element of $\Altf (\ul G \cup \ul H)$ that agrees with $\varphi$ on $B$, hence $\varphi$ belongs to $K_G$. But $\varphi$ cannot be an element of $G \pv H$, because there is no cofinite subset of $\ul G$ on which $\varphi$ coincides with the multiplication by an element of $G$, since there are orbits of different sizes.
\end{proof}

\begin{prop}
    If $G$ and $H$ are infinite, then the diagonal embedding $G \pv H \rightarrow K_G \times K_H$ has a discrete image.
\end{prop}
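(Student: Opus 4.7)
The plan is to exhibit a basic open identity neighbourhood in $K_G \times K_H$ whose preimage under the diagonal embedding is trivial. This will directly witness discreteness of the image.

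The key observation is that the two invariant bornologies $\Born_G$ and $\Born_H$ are complementary in the following sense: the set $\ul H$ belongs to $\Born_G$ (because $\ul H \cap \ul G = \{\ul e\}$ is finite) and, symmetrically, $\ul G$ belongs to $\Born_H$. Consequently, the pointwise fixator $\Fix(\ul H)$ is a basic open identity neighbourhood of $\Sym_{\Born_G}(\ul G \cup \ul H)$, hence intersects $K_G$ in an open identity neighbourhood; likewise $\Fix(\ul G)$ is an open identity neighbourhood in $K_H$.

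The next step is to consider the product neighbourhood $U = (K_G \cap \Fix(\ul H)) \times (K_H \cap \Fix(\ul G))$ in $K_G \times K_H$ and to take any element $\sigma \in G \pv H$ whose diagonal image $(\sigma, \sigma)$ lies in $U$. By projecting onto the first coordinate, $\sigma$ fixes $\ul H$ pointwise; by projecting onto the second coordinate, $\sigma$ fixes $\ul G$ pointwise. Hence $\sigma$ fixes every element of $\ul G \cup \ul H$. Since $G \pv H$ is, by its very definition, a subgroup of $\Sym(\ul G \cup \ul H)$, this forces $\sigma = e$. Thus $U$ meets the diagonal image only at the identity, proving discreteness.

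There is essentially no obstacle here once one notices the symmetric inclusions $\ul H \in \Born_G$ and $\ul G \in \Born_H$; everything else follows from the definition of the topology on $\Sym_\Born$ and from the faithfulness of the permutation representation. The point of the statement, rather, is the conceptual one highlighted just before it: although each of $K_G$ and $K_H$ is typically non-discrete (containing for instance permutations with orbits of various finite sizes that cannot come from $G \pv H$), the two completions record complementary asymptotic information, so that $G \pv H$ embeds discretely into their product much as an $S$-arithmetic group embeds discretely into the product of its local completions.
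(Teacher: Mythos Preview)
Your proof is correct and follows exactly the same approach as the paper: observe that $\ul H \in \Born_G$ and $\ul G \in \Born_H$, so that $\Fix(\ul H) \times \Fix(\ul G)$ is an identity neighbourhood in $K_G \times K_H$ meeting the diagonal only at the identity. The paper's proof is essentially your argument compressed to two sentences.
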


\begin{proof}
    Indeed, $\ul H \in \Born_G$ and $\ul G \in \Born_H$, hence $\Fix \ul H \times \Fix \ul G$ is an identity neighbourhood of $K_G \times K_H$. This neighbourhood meets the diagonal only in the element $(e, e)$, so the image of $G \pv H$ is discrete.
\end{proof}

\medskip
Finally, we indicate two natural generalisations of the construction $G\pv H$.

\medskip
A first observation is that we can extend the definition to any family $\{G_i\}_{i\in I}$ of groups $G_i$ indexed by an arbitrary set $I$. To this end, we consider again for each $i\in I$ a copy $\ul{G_i}$ of the set underlying $G_i$, take the disjoint union of all these sets and then identify all the neutral elements $\ul e\in\ul{G_i}$. We consider each $G_i$ as a permutation group of the set $X=\bigcup_{i\in I} \ul{G_i}$ by giving it the regular action on $\ul{G_i}$ and the trivial action on the complement $X\smallsetminus \ul{G_i}$. Finally, we define $\bigpv_{i\in I} G_i$ to be the permutation group of $X$ generated by these copies of each $G_i$.

Thus $G_1\pv G_2$ is the special case $\bigpv_{i\in\{1,2\}} G_i$ but we should take care to distinguish $\bigpv_{i\in\{1,2,3\}} G_i$ from the groups $G_1\pv (G_2\pv G_3)$ and $(G_1\pv G_2) \pv G_3$ considered in Section~\ref{sec:nat}. A good number of the results presented in this text admit a straightforward adaptation to this setting.

\medskip

A second generalisation is based on the observation that $G\pv H$ is not merely a group, but comes with a canonical incarnation as a permutation group (on the set $\ul G \cup \ul H$). Moreover, there is a natural choice of a point in this set, namely $\ul e$.

At this juncture we recall that a group $G$ can be considered as a special case of a pointed $G$-set in the following more precise sense. The functor associating to a group $G$ the pointed $G$-set $(\ul G, \ul e)$ is a fully faithful embedding of the category of groups into the category of pointed sets with a group action; this would not be the case for unpointed sets.

To extend the definition of $G\pv H$, we can start more generally with a pointed $G$-set $(X, *)$ and a pointed $H$-set $(Y, *)$. We then form the disjoint union of $X$ and $Y$ but with both copies of $*$ identified and still denote the resulting pointed space as $(X\cup Y, *)$. We consider now the group of permutations of $X\cup Y$ generated by the given images of $G$ and $H$. Some of the results presented in this text extend to this setting, and more generally we can form the pointed set with group action associated to an arbitrary family of groups $G_i$ given each with its pointed $G_i$-set $(X_i, *)$.

\medskip
We note that all these generalisations also support natural topologies. The obvious one is the topology of pointwise convergence inherited from the ambient symmetric group, but as before the various ``factors'' define bornologies $\Born$ for which we can consider the topology of uniform discrete convergence on $\Born$.

\section{Questions}\label{sec:q}

\begin{question}\label{q:GG_0}
Is there a \emph{finitely generated} group $G$ satisfying $G \cong G\pv G_0$ for a non-trivial group $G_0$?
\end{question}

The corresponding question has a non-trivial positive answer in the case of direct products, see Proposition~1 in~\cite{Tyrer-Jones}, and remains open for finitely presented groups~\cite{Hirshon02}. It has a negative answer for free products because of Grushko's theorem~\cite{Grushko}.

If no finite generation is assumed, we have seen in Example~\ref{exam:GG_0} that a straightforward limit argument provides $G\cong G\pv G_0$.

\begin{question}
Does $G\pv H$ have Haagerup's property when both $G$ and $H$ do?
\end{question}

If the answer is positive, then we would expect it to be established with the method of \emph{spaces with measured walls}, compare~\cite{Cherix-Martin-Valette,Cornulier-Stalder-Valette12,Robertson-Steger98}. In that case, we would also expect the corresponding positive answer for the property of admitting a proper action on a \cat0 cube complex (Property~PW in~\cite{Cornulier-Stalder-Valette08}).

\begin{question}\label{q:fp}
Can $G\pv H$ ever be finitely presented when $G$ and $H$ are infinite?
\end{question}

Since $G \pv H$ is monolithic, finite presentability is equivalent to being isolated in the space of marked groups, see Proposition~2 in~\cite{Cornulier-Guyot-Pitsch2007}. As established in Corollary~\ref{cor:rf}, $G\pv H$ is not finitely presented when $G$ and $H$ are infinite and residually finite.

\medskip
Section~\ref{sec:amen} explores several aspects of amenability but we have no complete answer to the following.

\begin{prob}
Characterise the pairs $G$, $H$ such that $G\pv H$ admits a faithful transitive amenable action.
\end{prob}

Recall that a satisfactory characterisation in the case of free products is obtained in~\cite{Glasner-Monod}, while the case of direct products is elementary.

\medskip
In view of Theorem~\ref{thm:LEF}, it is natural to ask:
\begin{question}
    Is $G \pv H$ sofic whenever $G$ and $H$ are so?
\end{question}
Recall that direct and free products indeed preserve soficity (Theorem~1 in~\cite{Elek-Szabo06}).


\bibliographystyle{plain}
\bibliography{biblio_prod}

\end{document}